\documentclass[11pt,twoside]{article}
\usepackage[T1]{fontenc}
\usepackage[utf8]{inputenc}
\usepackage{amssymb,amsmath,amsthm,amsfonts,mathrsfs,hyperref}
\usepackage{times}
\usepackage{enumerate}
\usepackage{cite,titletoc}
\usepackage{graphicx}
\usepackage{float}
\usepackage{epstopdf}
\usepackage{subcaption}

\allowdisplaybreaks
\newcommand{\R}{\mathbb R}

\newcommand{\supp}{\operatorname{supp}}

\newcommand{\one}{\mathbf 1}

\hypersetup{colorlinks=true,linkcolor=blue,citecolor=red,urlcolor=cyan}

\theoremstyle{plain}
\newtheorem{theorem}{Theorem}[section]
\newtheorem{lemma}[theorem]{Lemma}

\newtheorem{corollary}[theorem]{Corollary}

\theoremstyle{definition}

\theoremstyle{remark}
\newtheorem{remark}[theorem]{Remark}

\numberwithin{equation}{section}

\title{Pointwise endpoint limits for nonlocal operators}
\author{Dinghuai Wang\footnote{Wang Dinghuai (\texttt{Wangdh1990@126.com}) is supported by NSFC (No.~12101010). } 
    \vspace{0.5cm}\\
\small
School of Mathematics and Statistics, Anhui Normal University, Wuhu 241002, China.\\
}
\vspace{0.5cm}
\date{}

\begin{document}
\maketitle
\thispagestyle{empty}

\begin{abstract}
We establish two pointwise endpoint limits for nonlocal operators:
one based on normalized integrals and a distributional limit
based on weighted weak-type norms.
As applications, the limits yield pointwise Bourgain--Brezis--Mironescu,
Maz'ya--Shaposhnikova, Brezis--Seeger--Van
Schaftingen--Yung and Gu--Yung formulas, together with
higher-order and mean-oscillation variants. We then investigate endpoint limits for fractional powers generated
by semigroups and for approximation processes, and derive sharp
strong and weak endpoint estimates for operators in harmonic analysis.
Finally, while Dom\'{i}nguez and Milman obtained weak-type estimates on
product spaces for $p>1$ and left the endpoint $p=1$ open
(see [Adv.~Math.~411 (2022), Paper~No.~108774, p.~22]),
we prove a sharp pointwise limit in the parameter variable and obtain two--sided
iterated weak-type estimates at $p=1$.

\vskip 0.2 true cm

\noindent
\textbf{Keywords.} Endpoint limits; nonlocal operators; pointwise formulas; maximal operators; weak-type estimates.

\vskip 0.2 true cm
\noindent
\textbf{2020 Mathematical Subject Classification.} 42B25, 46E35, 35R11, 47A60.
\end{abstract}

\tableofcontents

\section{Introduction}

\subsection{Background and motivation}
Let $0<s<1$ and $1<p<\infty$. The homogeneous fractional Sobolev
seminorm on $\mathbb{R}^{n}$ is defined by
\begin{equation}
\|f\|_{\dot W^{s,p}(\mathbb{R}^n)}
=
\left(
\int_{\mathbb{R}^n}\int_{\mathbb{R}^n}
\frac{|f(x)-f(y)|^p}{|x-y|^{n+sp}}
\,dx\,dy
\right)^{1/p}.
\end{equation}
As the fractional parameter $s$ tends to $0$ or $1$,
the asymptotic behavior of this nonlocal quantity has become
a cornerstone of modern nonlocal analysis.
After suitable renormalization,
the fractional seminorm recovers the $L^p$-norm of the function
(when $s\to 0$) or the $L^p$-norm of its gradient (when $s\to 1$).

The first endpoint phenomenon was established by Bourgain, Brezis and
Mironescu \cite{BBM01,Bre02}. For $f\in\dot W^{1,p}(\mathbb{R}^{n})$,
\begin{equation}
\lim_{s\to1^-}
(1-s)
\|f\|_{\dot W^{s,p}(\mathbb{R}^{n})}^{p}
=
\frac{K_{n,p}}{p}
\|\nabla f\|_{L^p(\mathbb{R}^{n})}^{p},
\end{equation}
where
$$
K_{n,p}
=
\int_{\mathbb S^{n-1}}
|e\cdot\theta|^{p}\,d\sigma(\theta)
$$
is independent of the choice of the unit vector $e$. The opposite endpoint $s\to0^+$ was subsequently investigated by
Maz'ya and Shaposhnikova \cite{MS02}. They proved that
\begin{equation}
\lim_{s\to0^+}
s\|f\|_{\dot W^{s,p}(\mathbb{R}^{n})}^{p}
=
\frac{2}{p}|\mathbb S^{n-1}|
\|f\|_{L^p(\mathbb{R}^{n})}^{p}.
\end{equation}

A surprising formula was introduced by Brezis, Van Schaftingen and Yung
\cite{BSY1}, and later studied systematically by Brezis, Seeger,
Van Schaftingen and Yung \cite{BSVY20}, who developed a weak-type framework
that avoids the singular normalization factor appearing in the BBM formula. More precisely, for $1<p<\infty$ and $\gamma\neq0$,
\begin{equation}
\left[
Q_{\gamma/p}u
\right]_{L^{p,\infty}(\mathbb{R}^{2n},\nu_\gamma)}
\leq
C(n,p,\gamma)
\|\nabla u\|_{L^p(\mathbb{R}^{n})},
\end{equation}
where
$$
Q_{\gamma/p}u(x,y)
=
\frac{u(x)-u(y)}
{|x-y|^{1+\gamma/p}}\qquad \text{and} \qquad d\nu_\gamma(x,y)
=
|x-y|^{\gamma-n}\,dx\,dy.
$$
Furthermore, for $\gamma>0$, they obtained
\begin{equation}
\lim_{\lambda\to+\infty}
\lambda^{p}
\nu_\gamma
\left(
\left\{
(x,y):
\frac{|u(x)-u(y)|}
{|x-y|^{1+\gamma/p}}
>\lambda
\right\}
\right)
=
\frac{K_{n,p}}{|\gamma|}
\|\nabla u\|_{L^p(\mathbb{R}^{n})}^{p}.
\end{equation}
Recently, Gu and Yung \cite{GY21} considered the opposite endpoint $s=0$. For $1\le p<\infty$,
\begin{equation}
c_1^{1/p}
\|u\|_{L^p }
\leq
\left[
\frac{u(x)-u(y)}
{|x-y|^{n/p}}
\right]_{L^{p,\infty}(\mathbb R^n\times\mathbb R^n)}
\leq
2c_2^{1/p}
\|u\|_{L^p } .
\end{equation}
Moreover,
\begin{equation}
\lim_{\lambda\to0^+}
\lambda^p
\mathcal L^{2n}
\left(
\left\{
(x,y):
\frac{|u(x)-u(y)|}{|x-y|^{n/p}}
\geq\lambda
\right\}
\right)
=
2 |\mathbb{S}^{n-1}|
\|u\|_{L^p }^p .
\end{equation}

\medskip

These theories have since been
extended in several directions, including general kernels, higher-order
formulas, metric and ball-Banach settings, interpolation methods, and
oscillation functionals
\cite{BN06,BN18,BSSVY24,DGP+24,DLYYZmetric,DLYYZ23,Davila02,DM23,DSSVY23,HuEtAl25,Nguyen06,Ponce04,
Zhao24,ZYY23}.  Related results concern
Orlicz--Sobolev spaces, general domains, homogeneous fractional Sobolev
spaces, Triebel--Lizorkin methods, sharp assumptions for BBM limits, Carnot
groups, and heat-content energies
\cite{ACPS20,BalMohantaRoy20,BrascoGomezVazquez21,BSY23,DavoliDiFrattaPagliari26,GarofaloTralli23,
GennaioliStefani26,Mohanta24}.
Maximal functions associated with smoothness and local oscillations were
studied systematically in \cite{DeVoreSharpley84}.

\medskip

The purpose of this paper is to develop a unified framework for endpoint limits of nonlocal operators,
encompassing both normalized integral and weak-type formulations.

\subsection{BBM-type and BSVY-type limit formulas}

The main observation of this paper is the following.
All these endpoint phenomena can be reduced to the study of a single
auxiliary function $T:(0,\infty)\to\mathbb{R}$.
Its asymptotic behavior near $0$ and $+\infty$ encodes two complementary limits:
$$
\alpha\int_0^\infty T(t)\,t^{\alpha-1}\,dt
\qquad\text{and}\qquad
\lambda^q\, w_\gamma
\Bigl(
\Bigl\{ t>0 : \frac{T(t)}{t^{\gamma/q}}>\lambda \Bigr\}
\Bigr),
\quad \text{where } dw_\gamma(t)=t^{\gamma-1}\,dt.
$$

Based on this observation, we establish the following limit formulas.
These results form a unified framework for pointwise BBM, MS, BSVY and GY type formulas, together with higher-order and mean-oscillation variants.

\begin{theorem}[BBM-type limit formula]
\label{thm:BBM}
Let \(T:(0,\infty)\to\mathbb{R}\) be measurable.

\begin{enumerate}

\item[(I)] \textbf{Behavior at the origin.}
Assume that
\(
\lim_{t\to0^+}T(t)
\)
exists and is finite. If
\begin{equation*}
|T(t)|\leq C t^{-\eta},
\qquad t\geq1,
\end{equation*}
for some constants \(\eta>0\) and \(C>0\), then
\begin{equation}\label{BBM:origin}
\lim_{t\to0^+}T(t)
=
\lim_{\gamma\to0^+}
\gamma
\int_0^\infty
T(t)t^{\gamma-1}\,dt .
\end{equation}

\item[(II)] \textbf{Behavior at infinity.}
Assume that
\(
\lim_{t\to+\infty}T(t)
\)
exists and is finite. If
\begin{equation*}
|T(t)|\leq C t^{\eta},
\qquad 0<t\leq1,
\end{equation*}
for some constants \(\eta>0\) and \(C>0\), then
\begin{equation}
\label{BBM:infinity}
\lim_{t\to+\infty}T(t)
=
\lim_{\gamma\to0^-}
(-\gamma)
\int_0^\infty
T(t)t^{\gamma-1}\,dt .
\end{equation}

\end{enumerate}
\end{theorem}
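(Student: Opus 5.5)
The plan is the standard Abelian argument. The weight $\gamma t^{\gamma-1}\,dt$ on $(0,1)$ has total mass $1$ for every $\gamma>0$. As $\gamma\to0^+$ this mass concentrates near $t=0$, because $\gamma\int_0^\delta t^{\gamma-1}\,dt=\delta^\gamma\to1$ for each fixed $\delta\in(0,1)$. Meanwhile the contribution from $t\ge1$ is killed by the decay hypothesis combined with the prefactor $\gamma$.

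For part (I), set $L=\lim_{t\to0^+}T(t)$. Since $T$ has a finite limit at $0$ and is measurable, we may assume $T$ is bounded near $0$. I would also assume, or note as implicitly required, that $T$ is locally integrable on $(0,\infty)$ so that the integral makes sense. For $0<\gamma<\eta$ split
\[
\gamma\int_0^\infty T(t)t^{\gamma-1}\,dt=\gamma\int_0^\delta+\gamma\int_\delta^1+\gamma\int_1^\infty .
\]
The tail is bounded by $\gamma C\int_1^\infty t^{\gamma-\eta-1}\,dt=\gamma C/(\eta-\gamma)\to0$. The middle piece is bounded by $\gamma\,\delta^{\gamma-1}\int_\delta^1|T|$ for $\gamma<1$, which tends to $0$ for fixed $\delta$ by local integrability.

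Given $\varepsilon>0$, choose $\delta$ with $|T(t)-L|<\varepsilon$ on $(0,\delta)$. Then
\[
\Big|\gamma\int_0^\delta T(t)t^{\gamma-1}\,dt-L\delta^\gamma\Big|\le\varepsilon\delta^\gamma\le\varepsilon .
\]
Since $\delta^\gamma\to1$, taking $\limsup_{\gamma\to0^+}$ and then letting $\varepsilon\to0$ gives \eqref{BBM:origin}.

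Part (II) reduces to part (I) by the substitution $t=1/u$. Put $S(u)=T(1/u)$. Then $\lim_{u\to0^+}S(u)=\lim_{t\to\infty}T(t)$, and the hypothesis $|T(t)|\le Ct^\eta$ for $t\le1$ becomes $|S(u)|\le Cu^{-\eta}$ for $u\ge1$. With $\beta=-\gamma>0$,
\[
(-\gamma)\int_0^\infty T(t)t^{\gamma-1}\,dt=\beta\int_0^\infty S(u)u^{\beta-1}\,du .
\]
Applying (I) to $S$ as $\beta\to0^+$ yields \eqref{BBM:infinity}.

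The only delicate point is the middle region $[\delta,1]$, where no growth hypothesis is given. If $T$ is not assumed locally integrable, the integral may diverge, so I would make explicit that $T\in L^1_{\mathrm{loc}}(0,\infty)$, for instance bounded on compact subsets. Everything else is routine estimation.
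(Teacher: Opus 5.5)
Your proof is correct. For part (I) it takes a different, and sturdier, route than the paper; part (II) is the same in both, via $t=1/s$ and reduction to (I).

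You cut at a fixed $\delta<1$ and at $1$. The mass $\delta^\gamma\to1$ of $\gamma t^{\gamma-1}\,dt$ on $(0,\delta)$ produces $L$. The piece on $[\delta,1]$ is killed by the factor $\gamma$ together with local integrability, and the piece on $[1,\infty)$ by the decay hypothesis. No case distinction on the sign of $L$ is needed.

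The paper instead cuts at the $\gamma$-dependent point $t_0(\gamma)=c_\varepsilon^{1/\gamma}$, with $c_\varepsilon=(T_0-2\varepsilon)/(T_0-\varepsilon)$, so that $t_0^\gamma=c_\varepsilon$ is frozen. It bounds the inner integral from above through the level-set identity \eqref{BSY:eq1}, which is what links the BBM limit to the weak-type picture of Theorem~\ref{thm:BSVY}. It treats $T_0>0$, $T_0<0$ and $T_0=0$ separately.

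That route does not deliver its claim $I_1=\gamma\int_{t_0}^\infty T(t)t^{\gamma-1}\,dt\to0$. The bound $|T(t)|\le Ct^{-\eta}$ is applied on $[t_0,\infty)$, although it is only assumed for $t\ge1>t_0$. Moreover $\int_{t_0}^\infty t^{\gamma-1-\eta}\,dt=t_0^{\gamma-\eta}/(\eta-\gamma)$, so the resulting bound $C\gamma c_\varepsilon^{1-\eta/\gamma}/(\eta-\gamma)$ blows up as $\gamma\to0^+$. In fact, for $T\equiv T_0>0$ on $(0,1)$ and $T=T_0t^{-\eta}$ on $[1,\infty)$, one gets $I_1\to T_0(1-c_\varepsilon)\neq0$.

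The theorem survives because this contribution is $O(\varepsilon)$. Establishing that, however, needs exactly your fixed-$\delta$ decomposition and the local integrability of $T$ on compact subintervals. You rightly make that assumption explicit, since without it the integral need not exist; the paper uses it only tacitly. Your uniform treatment of the sign of $L$ also sidesteps the paper's $T_0=0$ case, whose inequality $0\le\gamma\int_0^\infty T(t)t^{\gamma-1}\,dt$ silently presumes $T\ge0$.
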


For \(\gamma\in\mathbb R\setminus\{0\}\), write
\[
w_\gamma(E):=\int_E t^{\gamma-1}\,dt,
\qquad E\subset(0,\infty)\ \text{measurable}.
\]

\begin{theorem}[BSVY-type limit formula]
\label{thm:BSVY}
Let \(\gamma\in\mathbb{R}\setminus\{0\}\), \(q\in(0,\infty)\), and let
\(T(t)\) be a non-negative measurable function such that
\[
\sup_{t>0}T(t)<\infty .
\]

\begin{enumerate}

\item[(I)] \textbf{Behavior at the origin.}
If
\(
\lim_{t\to0^+}T(t)
\)
exists and is finite, then
\begin{align}
\lim_{t\to0^+}T(t)
&=
\gamma^{1/q}
\lim_{\lambda\to+\infty}
\lambda
\,w_\gamma
\left(
\left\{
t>0:
\frac{T(t)}{t^{\gamma/q}}>\lambda
\right\}
\right)^{1/q},
&&\gamma>0,
\label{BSY:origin:pos}
\\
\lim_{t\to0^+}T(t)
&=
(-\gamma)^{1/q}
\lim_{\lambda\to0^+}
\lambda
\,w_\gamma
\left(
\left\{
t>0:
\frac{T(t)}{t^{\gamma/q}}>\lambda
\right\}
\right)^{1/q},
&&\gamma<0 .
\label{BSY:origin:neg}
\end{align}

\item[(II)] \textbf{Behavior at infinity.}
If
\(
\lim_{t\to+\infty}T(t)
\)
exists and is finite, then
\begin{align}
\lim_{t\to+\infty}T(t)
&=
\gamma^{1/q}
\lim_{\lambda\to0^+}
\lambda
\,w_\gamma
\left(
\left\{
t>0:
\frac{T(t)}{t^{\gamma/q}}>\lambda
\right\}
\right)^{1/q},
&&\gamma>0,
\label{BSY:infinity:pos}
\\
\lim_{t\to+\infty}T(t)
&=
(-\gamma)^{1/q}
\lim_{\lambda\to+\infty}
\lambda
\,w_\gamma
\left(
\left\{
t>0:
\frac{T(t)}{t^{\gamma/q}}>\lambda
\right\}
\right)^{1/q},
&&\gamma<0 .
\label{BSY:infinity:neg}
\end{align}

\end{enumerate}

Furthermore, the following universal estimate holds:
\begin{equation}
\label{BSY:universal}
|\gamma|^{1/q}
\sup_{\lambda>0}
\lambda
\,w_\gamma
\left(
\left\{
t>0:
\frac{T(t)}{t^{\gamma/q}}>\lambda
\right\}
\right)^{1/q}
\leq
\sup_{t>0}T(t).
\end{equation}
\end{theorem}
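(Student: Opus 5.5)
The plan is to reduce every statement to one explicit computation: the $w_\gamma$-measure of the set where a constant $c\ge0$ exceeds $\lambda t^{\gamma/q}$, and then to compare $T$ with such constants near the relevant endpoint. Write $M=\sup_{t>0}T(t)<\infty$ and $E_\lambda=\{t>0: T(t)>\lambda t^{\gamma/q}\}$. For $\gamma>0$ the condition $c>\lambda t^{\gamma/q}$ is equivalent to $t<(c/\lambda)^{q/\gamma}$, so
\[
w_\gamma\bigl(\{t: c>\lambda t^{\gamma/q}\}\bigr)=\int_0^{(c/\lambda)^{q/\gamma}}t^{\gamma-1}\,dt=\frac{c^q}{\gamma\lambda^q}.
\]
For $\gamma<0$ the set becomes $\{t>(\lambda/c)^{q/|\gamma|}\}$, and its measure is $\int_{(\lambda/c)^{q/|\gamma|}}^\infty t^{\gamma-1}\,dt=c^q/(|\gamma|\lambda^q)$. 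In both cases $|\gamma|\lambda^q w_\gamma(\cdot)=c^q$ exactly. The universal estimate \eqref{BSY:universal} follows at once: since $T\le M$, we have $E_\lambda\subset\{M>\lambda t^{\gamma/q}\}$, hence $|\gamma|\lambda^q w_\gamma(E_\lambda)\le M^q$ for every $\lambda>0$.

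For the four limits, set $L$ equal to the relevant endpoint limit of $T$ and fix $\varepsilon>0$. Consider case (I) with $\gamma>0$. Choose $\delta$ so that $|T(t)-L|<\varepsilon$ for $t<\delta$. As $\lambda\to\infty$, the threshold $t<(c/\lambda)^{q/\gamma}$ shrinks to $0$, so the endpoint set lies in $(0,\delta)$. Split $E_\lambda=(E_\lambda\cap(0,\delta))\cup(E_\lambda\cap[\delta,\infty))$.
\begin{itemize}
\item On $[\delta,\infty)$ we have $T\le M$, so this piece lies in $\{t\ge\delta: t<(M/\lambda)^{q/\gamma}\}$. That set is empty once $\lambda>M\delta^{-\gamma/q}$.
\item On $(0,\delta)$ the function $T$ is squeezed between $L-\varepsilon$ and $L+\varepsilon$. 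Hence, for $\lambda$ large, this piece is contained in $\{(L+\varepsilon)>\lambda t^{\gamma/q}\}$. It also contains $\{(L-\varepsilon)_+>\lambda t^{\gamma/q}\}$, because that set lies inside $(0,\delta)$ for large $\lambda$.
\end{itemize}
Together these give
\[
(L-\varepsilon)_+^q\le \gamma\lambda^q w_\gamma(E_\lambda)\le (L+\varepsilon)^q
\]
for large $\lambda$. Letting $\varepsilon\to0$ and taking $q$-th roots yields \eqref{BSY:origin:pos}. Note that $L\ge0$ because $T\ge0$.

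The other three cases use the same sandwich; only the regime of $\lambda$ changes so that the endpoint set concentrates at the correct end.
\begin{itemize}
\item Case (I) with $\gamma<0$: the set $\{t>(\lambda/c)^{q/|\gamma|}\}$ concentrates near $0$ only as $\lambda\to0^+$.
\item Case (II) with $\gamma>0$: the relevant sets are $\{t<(c/\lambda)^{q/\gamma}\}$, which exhaust a neighbourhood of $\infty$ as $\lambda\to0^+$.
\item Case (II) with $\gamma<0$: the sets $\{t>(\lambda/c)^{q/|\gamma|}\}$ move off to infinity as $\lambda\to\infty$.
\end{itemize}

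The one point needing care is the complementary region, for example $[\delta,\infty)$ in case (I) or $(0,R]$ in case (II). Boundedness by $M$ makes its contribution vanish in the limit, but the mechanism differs by case. In the configurations of case (I) with $\gamma>0$ and case (II) with $\gamma<0$, the contribution is eventually empty, as shown above. In case (II) with $\gamma>0$ the complementary piece is $(0,R]$, and it is not empty. It has $w_\gamma$-measure at most $R^\gamma/\gamma$, so after multiplying by $\lambda^q$ its contribution is $O(\lambda^q)\to0$ as $\lambda\to0^+$. For case (I) with $\gamma<0$ the analogous bound is $w_\gamma([\delta,\infty))=\delta^{\gamma}/|\gamma|<\infty$, again killed by $\lambda^q\to0$.

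Finally, the lower bound in the sandwich needs $(L-\varepsilon)_+$: if $L=0$ the lower bound is trivially $0$. Beyond this, the argument is purely bookkeeping, and I expect no serious obstacle.
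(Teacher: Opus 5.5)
Your proof is correct, but it reaches the result by a different mechanism than the paper. The paper substitutes $t=\lambda^{-q/\gamma}r$, which rewrites $\lambda^q w_\gamma(\{t: T(t)>\lambda t^{\gamma/q}\})$ as $\int_0^\infty \mathbf 1_{\{T(\lambda^{-q/\gamma}r)>r^{\gamma/q}\}}\,r^{\gamma-1}\,dr$. It then applies dominated convergence with the single majorant $\mathbf 1_{\{M>r^{\gamma/q}\}}\,r^{\gamma-1}$, whose integral is $M^q/|\gamma|$. Since $\lambda^{-q/\gamma}$ tends to the relevant endpoint, the integrand converges for a.e.\ $r$ to $\mathbf 1_{\{T_0>r^{\gamma/q}\}}$. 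So all four limits are one computation, and the region away from the endpoint never needs separate treatment.

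Your $\varepsilon$--$\delta$ sandwich by constants avoids the convergence theorem and gives explicit two-sided bounds at each $\lambda$, at the cost of case-by-case bookkeeping. As a by-product, it shows that $\sup T<\infty$ is really used only in case (I) with $\gamma>0$ and case (II) with $\gamma<0$, where the far piece of the level set must become empty. In the other two cases, the finiteness of $w_\gamma$ on the far region already suffices. The universal estimate is handled identically in both proofs.

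One point of bookkeeping to tidy concerns case (I) with $\gamma<0$ and case (II) with $\gamma>0$. There the comparison set $\{(L-\varepsilon)_+>\lambda t^{\gamma/q}\}$ is not eventually contained in the near region: it increases to all of $(0,\infty)$, and only its $\lambda^q$-weighted $w_\gamma$-mass concentrates at the endpoint. So the lower bound in those cases is $(L-\varepsilon)_+^q$ minus $|\gamma|\lambda^q$ times the $w_\gamma$-measure of the far region, not a plain containment. The same $O(\lambda^q)$ estimate you already use for the upper bound disposes of this term.
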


\subsection{Applications}
We next describe the applications that motivate the preceding theory.

\medskip
\noindent\textbf{1. Pointwise BBM, MS, BSVY and GY formulas.}
A first application concerns the pointwise BBM, MS, BSVY and GY
formulas. For example, consider
\[
T_{q,f}(t)
=
\frac{1}{t^q}
\int_{\mathbb{S}^{n-1}}
|f(x+t\theta)-f(x)|^q\,d\theta .
\]
Applying Theorem~\ref{thm:BBM} to this quantity yields, for
\(0<q<\infty\) and \(f\in C_c^1(\mathbb{R}^n)\),
\[
\lim_{s\to1^-}
q^{1/q}(1-s)^{1/q}
\left(
\int_{\mathbb{R}^{n}}
\frac{|f(x)-f(y)|^q}{|x-y|^{n+sq}}
\,dy
\right)^{1/q}
=
K_{n,q}^{1/q}|\nabla f(x)| ,
\]
and
\[
\lim_{s\to0^+}
q^{1/q}s^{1/q}
\left(
\int_{\mathbb{R}^{n}}
\frac{|f(x)-f(y)|^q}{|x-y|^{n+sq}}
\,dy
\right)^{1/q}
=
|\mathbb{S}^{n-1}|^{1/q}|f(x)| .
\]

Similarly, Theorem~\ref{thm:BSVY} provides pointwise weak-type counterparts
of the BSVY and GY formulas. The same argument also applies to higher-order
difference quotients and Frank-type oscillation formulas.

\medskip
\noindent\textbf{2. Semigroups and approximation processes.}
Let \(L\) be a non-negative densely
defined self-adjoint operator on \(L^2(\mathbb{R}^n)\), and let
\(\{e^{-tL}\}_{t>0}\) be the associated heat semigroup. For
\(0<\sigma<1\), the fractional power \(L^\sigma\) admits the representation
\[
L^\sigma f(x)
=
\frac{1}{\Gamma(-\sigma)}
\int_0^\infty
\left(
e^{-tL}f(x)-f(x)
\right)
\frac{dt}{t^{1+\sigma}} .
\]

The semigroup representation places fractional operators naturally within the
framework of Theorem~\ref{thm:BBM}.

\begin{theorem}\label{thm:unified-frac}
Assume that $f\in\operatorname{Dom}(L)$ and
$$
\lim_{t\to0^+}\frac{f(x)-e^{-tL}f(x)}{t}=Lf(x),
\qquad
\lim_{t\to\infty}e^{-tL}f(x)=0,
$$
and that the two functions
\[
t\longmapsto \frac{f(x)-e^{-tL}f(x)}{t},
\qquad t\longmapsto f(x)-e^{-tL}f(x)
\]
satisfy the polynomial bounds required in Theorem~\ref{thm:BBM}(I) and
(II), respectively. Then
\begin{equation}
\label{eq:unified-limit-1}
\lim_{\sigma\to1^-}L^\sigma f(x)=Lf(x), \qquad
\lim_{\sigma\to0^+}L^\sigma f(x)=f(x).
\end{equation}
\end{theorem}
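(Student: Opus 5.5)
We rewrite $L^\sigma f(x)$ so that Theorem~\ref{thm:BBM} applies directly. Since $\Gamma(-\sigma)<0$ for $0<\sigma<1$, we write
\[
L^\sigma f(x)=\frac{1}{-\Gamma(-\sigma)}\int_0^\infty \bigl(f(x)-e^{-tL}f(x)\bigr)\,t^{-\sigma-1}\,dt .
\]
We then match the two endpoints with the two parts of Theorem~\ref{thm:BBM} by choosing the auxiliary function and the exponent $\gamma$ differently in each case.

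\emph{The limit $\sigma\to1^-$.} Set $T_1(t)=\bigl(f(x)-e^{-tL}f(x)\bigr)/t$. Then
\[
L^\sigma f(x)=\frac{1}{-\Gamma(-\sigma)}\int_0^\infty T_1(t)\,t^{\gamma-1}\,dt,\qquad \gamma:=1-\sigma\to0^+ .
\]
By hypothesis, $\lim_{t\to0^+}T_1(t)=Lf(x)$ is finite and $T_1$ satisfies the decay bound of part (I). So Theorem~\ref{thm:BBM}(I) gives $\gamma\int_0^\infty T_1t^{\gamma-1}\,dt\to Lf(x)$. It remains to show that $\frac{1}{-\Gamma(-\sigma)}\sim\gamma=1-\sigma$. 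From $\Gamma(-\sigma)=\Gamma(2-\sigma)/\bigl((-\sigma)(1-\sigma)\bigr)$ we get
\[
-\Gamma(-\sigma)(1-\sigma)=\frac{\Gamma(2-\sigma)}{\sigma}\to\Gamma(1)=1 .
\]
Hence $L^\sigma f(x)=\bigl[\gamma\,(-\Gamma(-\sigma))\bigr]^{-1}\gamma\int_0^\infty T_1t^{\gamma-1}\,dt\to Lf(x)$.

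\emph{The limit $\sigma\to0^+$.} Set $T_2(t)=f(x)-e^{-tL}f(x)$ and $\gamma:=-\sigma\to0^-$, so that
\[
L^\sigma f(x)=\frac{1}{-\Gamma(-\sigma)}\int_0^\infty T_2(t)\,t^{\gamma-1}\,dt .
\]
Since $e^{-tL}f(x)\to0$, we have $\lim_{t\to\infty}T_2(t)=f(x)$, and the small-$t$ bound $|T_2(t)|\le Ct^\eta$ is assumed. Theorem~\ref{thm:BBM}(II) then gives $\sigma\int_0^\infty T_2t^{-\sigma-1}\,dt\to f(x)$. For the normalization, $-\Gamma(-\sigma)\,\sigma=\Gamma(1-\sigma)\to1$, so $\frac{1}{-\Gamma(-\sigma)}=\sigma(1+o(1))$ and therefore $L^\sigma f(x)\to f(x)$.

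The argument is essentially bookkeeping, and the only delicate points are the following.
\begin{enumerate}
\item The Gamma asymptotics must be checked, including the sign convention $\Gamma(-\sigma)<0$.
\item The integral $\int_0^\infty T_i(t)\,t^{\gamma-1}\,dt$ must converge for $\gamma$ near the endpoint, so that the representation and the limit are taken over the same integral. This convergence is exactly what the polynomial bounds, together with the finite limits at the other endpoint, guarantee in the proof of Theorem~\ref{thm:BBM}.
\item Under the polynomial bounds, the semigroup representation of $L^\sigma f$ must hold pointwise at $x$. We take this as given, since it is how the statement defines $L^\sigma f(x)$.
\end{enumerate}
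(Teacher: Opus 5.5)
Your proof is correct and follows the paper's argument. You make the same choices $T(t)=(f(x)-e^{-tL}f(x))/t$ with $\gamma=1-\sigma$ in Theorem~\ref{thm:BBM}(I), and $S(t)=f(x)-e^{-tL}f(x)$ with $\gamma=-\sigma$ in Theorem~\ref{thm:BBM}(II). Your normalization $-(1-\sigma)\Gamma(-\sigma)=\Gamma(2-\sigma)/\sigma\to1$ is the paper's identity $1/|\Gamma(-\sigma)|=\sigma/\Gamma(1-\sigma)$ combined with $\gamma\Gamma(\gamma)\to1$, just written in a different order.
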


This theorem immediately applies to several fundamental examples, including
the fractional Laplacian
\(
(-\Delta)^\sigma,
\)
the fractional harmonic oscillator
\(
(-\Delta+|x|^2)^\sigma,
\)
and the fractional heat operator
\(
(\partial_t-\Delta)^\sigma .
\)

\begin{corollary}\label{cor:frac}
The conclusions of Theorem~\ref{thm:unified-frac} hold in each of the
following cases:
\begin{enumerate}
\item $L=-\Delta$ and $f\in\mathcal S(\mathbb R^n)$;
\item $L=-\Delta+|x|^2$ and $f\in\mathcal S(\mathbb R^n)$;
\item $L=\partial_t-\Delta_x$ and
      $f\in\mathcal S(\mathbb R^{n+1})$.
\end{enumerate}
\end{corollary}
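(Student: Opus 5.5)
The plan is to verify, in each of the three cases, the four hypotheses of Theorem~\ref{thm:unified-frac} at a fixed point $x$. These are the two pointwise limits $\lim_{t\to0^+}(f(x)-e^{-tL}f(x))/t=Lf(x)$ and $\lim_{t\to\infty}e^{-tL}f(x)=0$, plus two polynomial bounds. For $T_1(t)=(f(x)-e^{-tL}f(x))/t$ we need $|T_1(t)|\le Ct^{-\eta}$ for $t\ge1$. For $T_2(t)=f(x)-e^{-tL}f(x)$ we need $|T_2(t)|\le Ct^{\eta}$ for $0<t\le1$. Once these hold, Theorem~\ref{thm:unified-frac} gives \eqref{eq:unified-limit-1} directly.

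\textbf{Small-time behaviour.} We use the identity $f-e^{-tL}f=\int_0^t e^{-rL}Lf\,dr$, valid pointwise for Schwartz data since $\partial_r e^{-rL}f=-e^{-rL}Lf$. It gives $T_1(t)=\frac1t\int_0^t e^{-rL}(Lf)(x)\,dr$ and $|T_2(t)|\le t\sup_{r\le1}\|e^{-rL}Lf\|_{L^\infty}$. For the three operators $Lf$ is again Schwartz: $-\Delta f$, $(-\Delta+|x|^2)f$, and $\partial_tf-\Delta_xf$. So it suffices to have $e^{-rL}g\to g$ uniformly (or at least at $x$) as $r\to0$, together with $\|e^{-rL}g\|_\infty\le C\|g\|_\infty$, for Schwartz $g$. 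This yields $T_1(t)\to Lf(x)$ and the bound for $T_2$ with $\eta=1$.
\begin{itemize}
\item[(1)] For $-\Delta$ both facts follow from the Gaussian kernel.
\item[(2)] For the Hermite operator they follow from Mehler's kernel, which is positive and dominated by the Gaussian heat kernel (Feynman--Kac or a direct comparison).
\item[(3)] For $\partial_t-\Delta_x$ on $\mathbb R^{n+1}$ the semigroup is explicit: $e^{-\tau L}f(t,x)=(e^{\tau\Delta}f(t-\tau,\cdot))(x)$, a translation in time composed with a Gaussian in space. Uniform continuity of $f$ then gives both facts.
\end{itemize}

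\textbf{Large-time behaviour.} For $T_1$ with $t\ge1$ we have $|T_1(t)|\le t^{-1}(|f(x)|+\|e^{-tL}f\|_\infty)\le Ct^{-1}$ by the $L^\infty$ contractivity above, so $\eta=1$ works. It remains to show $e^{-tL}f(x)\to0$.
\begin{itemize}
\item[(1)] Use $|e^{t\Delta}f(x)|\le(4\pi t)^{-n/2}\|f\|_{L^1}$.
\item[(2)] Use the spectral gap $L\ge n$: the Mehler kernel gives $|e^{-tL}f(x)|\le Ce^{-nt}\|f\|_{L^1}$ for $t\ge1$. Alternatively expand in Hermite functions and apply Sobolev embedding to $e^{-tL}f$ in the Hermite scale.
\item[(3)] Use $|e^{-\tau L}f(t,x)|\le(4\pi\tau)^{-n/2}\|f(t-\tau,\cdot)\|_{L^1}$. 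This tends to $0$, and faster because $f$ decays in time, even for $n=0$ formally.
\end{itemize}

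\textbf{Main obstacle.} The step I expect to need the most care is case (3). There $L$ is not self-adjoint, so the representation of $L^\sigma$ and Theorem~\ref{thm:unified-frac} must be read with $L^\sigma$ defined by the same Balakrishnan/semigroup integral. One must also check that $f\in\operatorname{Dom}(L)$ and that the derivative identity holds pointwise for the non-symmetric semigroup. I would handle this by working directly with the explicit formula for $e^{-\tau L}$ and differentiating under the integral, which is justified by Schwartz decay. With the hypotheses verified, the corollary follows from Theorem~\ref{thm:unified-frac}, which in turn rests on Theorem~\ref{thm:BBM}(I),(II).
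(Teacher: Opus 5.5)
Your proposal is correct and follows the same route as the paper. Both use the identity $f-e^{-tL}f=\int_0^t e^{-rL}Lf\,dr$ to get the small-time limit and the bound $|T_2(t)|\le Ct$, the $L^\infty$ bound to get $|T_1(t)|\le C/t$ for $t\ge1$, and kernel decay to get $e^{-tL}f(x)\to0$. The paper writes this out only for $L=-\Delta$ and calls the other cases similar. Your Mehler-kernel and time-translation arguments for cases (2) and (3), and your remark that $\partial_t-\Delta_x$ is not self-adjoint so $L^\sigma$ must be understood through the semigroup formula, fill in details the paper leaves implicit.
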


The BSVY-type theorem also applies to approximation processes. Let
\(\{K_t\}_{t>0}\) be a family of integrable kernels, and let
\(f\in L^1_{\mathrm{loc}}(\mathbb R^n)\). At a fixed Lebesgue point \(x\) of
\(f\), assume that
\[
\lim_{t\to 0^+}K_t*f(x)= f(x)\qquad \text{and} \qquad
\sup_{t>0}|K_t*f(x)|<\infty.
\]

\begin{theorem}\label{thm:AI}
Let \(x\) be a Lebesgue point of \(f\) satisfying the preceding pointwise
hypotheses. Then, for every
\(\gamma\in\mathbb R\setminus\{0\}\) and \(q\in(0,\infty)\),
\begin{align*}
|f(x)|
&=
\gamma^{1/q}
\lim_{\lambda\to\infty}
\lambda\,
w_\gamma
\left(
\left\{
t>0:
|K_t*f(x)|>\lambda t^{\gamma/q}
\right\}
\right)^{1/q},
&&\gamma>0,
\\
|f(x)|
&=
(-\gamma)^{1/q}
\lim_{\lambda\to0^+}
\lambda\,
w_\gamma
\left(
\left\{
t>0:
|K_t*f(x)|>\lambda t^{\gamma/q}
\right\}
\right)^{1/q},
&&\gamma<0.
\end{align*}
In particular, if
\(
\sup_{t>0}|K_t*f(x)|
\leq
C_KMf(x),
\)
then
\[
|f(x)|
\leq
|\gamma|^{1/q}
\sup_{\lambda>0}
\lambda\,
w_\gamma
\left(
\left\{
t>0:
|K_t*f(x)|>\lambda t^{\gamma/q}
\right\}
\right)^{1/q}
\leq
C_KMf(x),
\]
where $M f(x):=\sup_{B(x,r)}\frac{1}{|B(x,r)|}\int_{B(x,r)}|f(y)|dy$.
\end{theorem}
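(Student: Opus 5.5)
We apply Theorem~\ref{thm:BSVY}(I) to the auxiliary function
\[
T(t):=|K_t*f(x)|,\qquad t>0,
\]
with $x$ the fixed Lebesgue point of the statement. The first step is to check the hypotheses of Theorem~\ref{thm:BSVY}.

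\emph{Measurability in $t$.} For each fixed $t$, the convolution $K_t*f(x)=\int K_t(x-y)f(y)\,dy$ is well defined by assumption. If $t\mapsto K_t$ is continuous into $L^1$ on compact sets, or jointly measurable in $(t,z)$, then Fubini gives measurability of $t\mapsto K_t*f(x)$. I would state this as an implicit standing assumption on the family $\{K_t\}$. It holds in all the standard examples, namely dilations $K_t(z)=t^{-n}K(z/t)$, for which $t\mapsto K_t*f(x)$ is in fact continuous.

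\emph{Non-negativity, boundedness and the limit at $0$.} $T$ is non-negative. The hypothesis $\sup_{t>0}|K_t*f(x)|<\infty$ gives $\sup_{t>0}T(t)<\infty$. The hypothesis $K_t*f(x)\to f(x)$ as $t\to0^+$, together with continuity of $|\cdot|$, gives
\[
\lim_{t\to0^+}T(t)=|f(x)|,
\]
which is finite. Note that $|f(x)|<\infty$ holds at a Lebesgue point.

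\emph{The two limit identities.} The condition $T(t)/t^{\gamma/q}>\lambda$ is exactly $|K_t*f(x)|>\lambda t^{\gamma/q}$, so the level sets in Theorem~\ref{thm:BSVY} coincide with those in the statement. Formula \eqref{BSY:origin:pos} for $\gamma>0$ and formula \eqref{BSY:origin:neg} for $\gamma<0$ then give the two asserted identities verbatim. No behavior of $T$ at infinity is needed, since part (I) requires only the limit at the origin and global boundedness.

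\emph{The two-sided estimate.} For the lower bound, the limit along $\lambda\to\infty$ (for $\gamma>0$) or along $\lambda\to0^+$ (for $\gamma<0$) is bounded by the supremum over all $\lambda>0$. Combined with the identity just proved, this yields
\[
|f(x)|\le |\gamma|^{1/q}\sup_{\lambda>0}\lambda\, w_\gamma\bigl(\{t>0: |K_t*f(x)|>\lambda t^{\gamma/q}\}\bigr)^{1/q}.
\]
For the upper bound, the universal estimate \eqref{BSY:universal} bounds the same supremum by
\[
\sup_{t>0}T(t)=\sup_{t>0}|K_t*f(x)|\le C_K Mf(x).
\]

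The only potential obstacle is the measurability of $t\mapsto K_t*f(x)$, which the statement leaves implicit. Everything else is a direct substitution into Theorem~\ref{thm:BSVY}.
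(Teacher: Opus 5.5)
Your proposal is correct and follows essentially the same route as the paper: set $T(t)=|K_t*f(x)|$, apply Theorem~\ref{thm:BSVY}(I) to get the two limits, and use the universal estimate \eqref{BSY:universal} for the upper bound. The differences are minor. The paper gets the limit and boundedness of $T$ from Lemma~\ref{lem:AI-radial-majorant} rather than from the stated pointwise hypotheses, and it leaves implicit both the lower bound (a limit is at most the supremum) and the measurability of $t\mapsto K_t*f(x)$, which you state explicitly.
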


This result applies to a broad class of approximation schemes, including
Poisson kernels,  Gaussian kernels and extension kernels arising from fractional elliptic equations.

\begin{corollary}\label{cor:AI}
The
conclusions of Theorem~\ref{thm:AI} hold for the following approximation
families:
\begin{enumerate}
\item the Poisson kernels
$$
P_t(x)=c_n\frac{t}{(t^2+|x|^2)^{(n+1)/2}};
$$
\item the Caffarelli--Silvestre extension kernels
$$
P_t^{(s)}(x)=c_{n,s}\frac{t^{2s}}{(t^2+|x|^2)^{(n+2s)/2}},
\qquad0<s<1;
$$
\item the Gaussian kernels
$$
G_t(x)=(4\pi t)^{-n/2}e^{-|x|^2/(4t)};
$$
\end{enumerate}
\end{corollary}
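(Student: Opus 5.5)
The plan is to derive Corollary~\ref{cor:AI} from Theorem~\ref{thm:AI}. For each family we must check three things at a given Lebesgue point $x$ of $f$: that $\lim_{t\to0^+}K_t*f(x)=f(x)$, that $\sup_{t>0}|K_t*f(x)|<\infty$, and the domination $\sup_{t>0}|K_t*f(x)|\le C_K Mf(x)$. We then invoke Theorem~\ref{thm:AI} directly, with $T(t)=|K_t*f(x)|$ implicitly fed into Theorem~\ref{thm:BSVY}. The natural tool is the classical radially decreasing majorant lemma.

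Recall the lemma: if $K_t(y)=t^{-n}\Phi(y/t)$, where $\Phi\ge0$ is radial, radially nonincreasing and integrable, then
\[
\sup_{t>0}|K_t*f(x)|\le \|\Phi\|_{L^1}\,Mf(x).
\]
Moreover, $K_t*f(x)\to(\int\Phi)\,f(x)$ at every Lebesgue point of $f$. The proof of the limit splits $\int\Phi_t(x-y)(f(y)-f(x))\,dy$ into the ball $|x-y|<\delta$ and its complement. On the ball one uses the layer-cake decomposition of $\Phi$ together with the smallness of the averages of $|f-f(x)|$ over small balls. The complement is handled by the tail of $\Phi$, provided $Mf(x)<\infty$ or $f$ has suitable growth. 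For this step I will assume, as is implicit in the corollary, that $f\in L^p$ for some $1\le p\le\infty$ (or more generally that $Mf(x)<\infty$), so that the finiteness hypothesis in Theorem~\ref{thm:AI} follows from the maximal bound.

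The families are checked one at a time.
\begin{enumerate}
\item Poisson kernel: $P_t(y)=t^{-n}\Phi(y/t)$ with $\Phi(y)=c_n(1+|y|^2)^{-(n+1)/2}$. This $\Phi$ is radial, decreasing and integrable, and $c_n$ is chosen so that $\int\Phi=1$.
\item Caffarelli--Silvestre kernel: $P^{(s)}_t(y)=t^{-n}\Phi_s(y/t)$ with $\Phi_s(y)=c_{n,s}(1+|y|^2)^{-(n+2s)/2}$. Integrability holds since $n+2s>n$, and $c_{n,s}$ normalizes the mass to $1$.
\item Gaussian kernel: here the scaling is parabolic, $G_t(y)=\tau^{-n}\Phi(y/\tau)$ with $\tau=\sqrt{t}$ and $\Phi(y)=(4\pi)^{-n/2}e^{-|y|^2/4}$. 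The sup over $t>0$ equals the sup over $\tau>0$, and $t\to0^+$ corresponds to $\tau\to0^+$, so the lemma applies verbatim.
\end{enumerate}
In every case $\|\Phi\|_{L^1}=1$, so $C_K=1$.

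With these facts, Theorem~\ref{thm:AI} gives both limit identities and the two-sided inequality at every Lebesgue point where $Mf(x)<\infty$. The only delicate step is the approximate-identity convergence at Lebesgue points rather than at continuity points, and it is handled by the standard majorant argument above. A small side issue is that the Gaussian family uses $\sqrt t$ scaling, so the $w_\gamma$-measure is taken in the original variable $t$. This is harmless, because Theorem~\ref{thm:AI} only needs the pointwise hypotheses on $t\mapsto K_t*f(x)$, not dilation structure.
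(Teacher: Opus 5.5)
Your proposal is correct and follows the paper's own argument. You write each kernel as a dilate of a non-negative, radial, radially non-increasing, unit-mass profile and invoke the radial-majorant lemma (Lemma~\ref{lem:AI-radial-majorant}) to get both the convergence $K_t*f(x)\to f(x)$ and the bound $\sup_{t>0}|K_t*f(x)|\le Mf(x)$, after which Theorem~\ref{thm:AI} applies. Your two extra remarks fill in details that the paper's ``the same argument'' leaves implicit: the reparametrization $\tau=\sqrt t$ for the Gaussian family, which is not literally of the form $t^{-n}K(x/t)$ used in Section~5, and the explicit requirement $Mf(x)<\infty$ (e.g.\ $f\in L^p$), which is needed for the finiteness hypothesis.
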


\medskip
\noindent\textbf{3. Limiting behavior for Harmonic analysis operators.} The BBM-type and BSVY-type limit formulas also apply to asymptotic limits of classical
operators. For \(\gamma\ne0\), define
\[
W_\gamma(E):=\int_E |x|^{\gamma-n}\,dx.
\]

\begin{theorem}
\label{thm:weak:T}
Assume that there exist $R_0\ge1$ and
$G_f\in L^1(\mathbb S^{n-1})$ such that $\mathbf T f\in L^{p_0}(B(0,R_0))$ for some $p_0>1$,
\begin{equation}
\label{eq:asymptotic-1}
r^n|\mathbf T f(r\theta)|\le G_f(\theta),
\qquad r\ge R_0,\quad\text{for a.e. }\theta\in\mathbb S^{n-1},
\end{equation}
and that
\begin{equation}
\label{eq:asymptotic-3}
A_f(\theta):=\lim_{r\to\infty}r^n|\mathbf T f(r\theta)|
\end{equation}
exists for almost every $\theta$.  Then, for every $\gamma>0$,
$$
\lim_{\lambda\to0^+}\lambda W_\gamma
\left(\left\{x:
\frac{|\mathbf T f(x)|}{|x|^{\gamma-n}}>\lambda\right\}\right)
=\frac1\gamma\int_{\mathbb S^{n-1}}A_f(\theta)\,d\theta.
$$
If $\gamma<0$ , then
$$
\lim_{\lambda\to\infty}\lambda W_\gamma
\left(\left\{x:
\frac{|\mathbf T f(x)|}{|x|^{\gamma-n}}>\lambda\right\}\right)
=\frac1{-\gamma}\int_{\mathbb S^{n-1}}A_f(\theta)\,d\theta.
$$
\end{theorem}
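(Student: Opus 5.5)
The plan is to pass to polar coordinates and reduce to Theorem~\ref{thm:BSVY} with $q=1$, applied ray by ray, and then integrate over the sphere by dominated convergence. Writing $x=r\theta$ with $dx=r^{n-1}\,dr\,d\theta$, we have $|x|^{\gamma-n}dx=r^{\gamma-1}dr\,d\theta$. Moreover
\[
\frac{|\mathbf T f(r\theta)|}{r^{\gamma-n}}>\lambda
\iff
\frac{T_\theta(r)}{r^{\gamma}}>\lambda,
\qquad T_\theta(r):=r^n|\mathbf T f(r\theta)|.
\]
Hence, by Tonelli,
\[
W_\gamma\bigl(\{x:|\mathbf T f(x)|>\lambda|x|^{\gamma-n}\}\bigr)=\int_{\mathbb S^{n-1}}w_\gamma\bigl(\{r>0:T_\theta(r)/r^{\gamma}>\lambda\}\bigr)\,d\theta .
\]
Since $T_\theta$ need not be bounded near $r=0$, I split the set into the pieces $\{|x|<R_0\}$ and $\{|x|\ge R_0\}$ and treat them separately. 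Since the measure is additive over this split, it suffices to show that the inner piece contributes $o(1/\lambda)$ in the relevant limit, and that the outer piece produces the stated value.

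\emph{Outer piece.} Set $\widetilde T_\theta(r):=T_\theta(r)\one_{\{r\ge R_0\}}$. By \eqref{eq:asymptotic-1}, $\sup_{r>0}\widetilde T_\theta(r)\le G_f(\theta)<\infty$ for a.e.\ $\theta$. By \eqref{eq:asymptotic-3}, $\lim_{r\to\infty}\widetilde T_\theta(r)=A_f(\theta)$ exists. Theorem~\ref{thm:BSVY}(II) with $q=1$ then gives
\[
\lambda\, w_\gamma\bigl(\{r:\widetilde T_\theta(r)/r^\gamma>\lambda\}\bigr)\longrightarrow \frac{A_f(\theta)}{|\gamma|},
\]
as $\lambda\to0^+$ when $\gamma>0$, and as $\lambda\to\infty$ when $\gamma<0$. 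For the limit passage, the universal estimate \eqref{BSY:universal} bounds the left-hand side by $G_f(\theta)/|\gamma|$ uniformly in $\lambda$. This bound lies in $L^1(\mathbb S^{n-1})$, so dominated convergence (along arbitrary sequences $\lambda_k$) lets us integrate the limit over $\theta$, giving $\frac1{|\gamma|}\int_{\mathbb S^{n-1}} A_f\,d\theta$. The set $\{r:\widetilde T_\theta(r)/r^\gamma>\lambda\}$ coincides with the outer part of the original set, so this is exactly the outer contribution.

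\emph{Inner piece.} Let $E_\lambda:=\{x\in B(0,R_0):|\mathbf T f(x)|>\lambda|x|^{\gamma-n}\}$. On $E_\lambda$ we have $\lambda|x|^{\gamma-n}<|\mathbf T f(x)|$, and therefore
\[
\lambda W_\gamma(E_\lambda)\le\int_{E_\lambda}|\mathbf T f(x)|\,dx ,
\]
which is finite because $\mathbf T f\in L^{p_0}(B(0,R_0))\subset L^1(B(0,R_0))$.
\begin{itemize}
\item For $\gamma>0$ it is simpler still: $W_\gamma(B(0,R_0))=|\mathbb S^{n-1}|R_0^\gamma/\gamma<\infty$, so $\lambda W_\gamma(E_\lambda)\to0$ as $\lambda\to0^+$.
\item For $\gamma<0$ and $\lambda\to\infty$, note that $E_\lambda\subset\{x\in B(0,R_0):|\mathbf T f(x)|>\lambda R_0^{\gamma-n}\}$, since $|x|^{\gamma-n}\ge R_0^{\gamma-n}$ there. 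By Chebyshev, $|E_\lambda|\to0$. Absolute continuity of the integral of $|\mathbf T f|\in L^1(B(0,R_0))$ then forces $\int_{E_\lambda}|\mathbf T f|\to0$.
\end{itemize}
Hence the inner piece is negligible in both cases, and adding the two contributions yields both displayed limits.

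The main point needing care is the inner region for $\gamma<0$, where $W_\gamma$ is not locally finite at the origin. The pointwise inequality $\lambda|x|^{\gamma-n}<|\mathbf T f(x)|$ on $E_\lambda$ converts the weighted measure into an $L^1$ integral of $\mathbf T f$ over a shrinking set, and this is what handles it. The remaining points are routine: measurability of $(r,\theta)\mapsto T_\theta(r)$ for Tonelli, and the fact that the hypotheses hold only for a.e.\ $\theta$.
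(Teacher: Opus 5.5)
Your proof is correct. Its core coincides with the paper's: polar coordinates, Theorem~\ref{thm:BSVY}(II) with $q=1$ on each ray, the universal estimate \eqref{BSY:universal} supplying the majorant $G_f(\theta)/|\gamma|$, and dominated convergence over $\mathbb S^{n-1}$.

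Where you differ is the split at $|x|=R_0$. The paper applies Theorem~\ref{thm:BSVY} directly to $F_\theta(r)=r^n|\mathbf T f(r\theta)|$ on all of $(0,\infty)$. It asserts $0\le F_\theta(r)\le G_f(\theta)$ for every $r>0$, although \eqref{eq:asymptotic-1} gives this only for $r\ge R_0$, and the hypothesis $\mathbf T f\in L^{p_0}(B(0,R_0))$ is never invoked.

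Your truncation $\widetilde T_\theta=T_\theta\one_{\{r\ge R_0\}}$ is what makes the boundedness hypothesis of Theorem~\ref{thm:BSVY} actually hold. Your inner estimate is precisely where the local hypothesis enters. For $\gamma>0$ it is trivial, since $W_\gamma(B(0,R_0))<\infty$. For $\gamma<0$ it comes from $\lambda W_\gamma(E_\lambda)\le\int_{E_\lambda}|\mathbf T f|\,dx$ together with $|E_\lambda|\to0$ and absolute continuity. It also shows that $\mathbf T f\in L^1(B(0,R_0))$ would suffice.

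So your route is the paper's strategy with its gap filled, and it is the version that actually proves the theorem as stated.
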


\begin{theorem}
\label{thm:strong:T}
Under \eqref{eq:asymptotic-1}--\eqref{eq:asymptotic-3} and
 assume that there is \(p_0>1\) such that
\(\mathbf T f\in L^{p_0}(B(0,1))\), \(G_f\in L^{p_0}(\mathbb S^{n-1})\), and
\[
\lim_{R\to\infty}\sup_{r\ge R}
\bigl\|r^n|\mathbf T f(r\,\cdot)|-A_f\bigr\|_{L^{p_0}(\mathbb S^{n-1})}=0.
\]
Then
\[
\lim_{p\to1^+}(p-1)\|\mathbf T f\|_{L^p }^p
= \frac1n\int_{\mathbb S^{n-1}}A_f(\theta)\,d\theta.
\]
\end{theorem}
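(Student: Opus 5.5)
Our plan is to reduce the statement to Theorem~\ref{thm:BBM}(II) by passing to polar coordinates and substituting $\gamma=n(1-p)<0$. Write $p=1+\varepsilon$ and split
\[
\|\mathbf T f\|_{L^p}^p=\int_{B(0,R)}|\mathbf T f|^p\,dx+\int_{|x|\ge R}|\mathbf T f|^p\,dx .
\]
For the local part, Hölder's inequality gives $\int_{B(0,1)}|\mathbf T f|^p\le \|\mathbf T f\|_{L^{p_0}(B(0,1))}^p|B(0,1)|^{1-p/p_0}$ for $p\le p_0$, which stays bounded as $p\to1^+$. After multiplication by $(p-1)$ it therefore tends to $0$. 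The same holds on the annulus $1\le|x|\le R$: there $|\mathbf T f(r\theta)|\le r^{-n}G_f(\theta)$ for $r\ge R_0$, and we may take $R_0\le$ the splitting radius, or else absorb a finite annulus using the $L^{p_0}$ information. If $\mathbf T f$ is only known to be $L^{p_0}$ on $B(0,1)$, we enlarge using the pointwise bound \eqref{eq:asymptotic-1}, which is available for $r\ge R_0$. Any leftover compact annulus $1\le r\le R_0$ is handled the same way provided $\mathbf T f$ is locally $L^{p_0}$ there; we will simply assume this is implicit, as in Theorem~\ref{thm:weak:T}.

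For the exterior part, set $h_r(\theta)=r^n|\mathbf T f(r\theta)|$. In polar coordinates,
\[
\int_{|x|\ge R}|\mathbf T f|^p\,dx=\int_R^\infty r^{n-1-np}\int_{\mathbb S^{n-1}}h_r(\theta)^p\,d\theta\,dr .
\]
Put $T(r)=\int_{\mathbb S^{n-1}}h_r(\theta)^p\,d\theta$ and $\gamma=n(1-p)$, so that $(p-1)=(-\gamma)/n$. The exterior term then becomes $\frac1n(-\gamma)\int_R^\infty T(r)r^{\gamma-1}\,dr$. The key point is that $T$ depends on $p$. So instead of citing Theorem~\ref{thm:BBM}(II) verbatim, we rerun its mechanism: since $(-\gamma)\int_R^\infty r^{\gamma-1}\,dr=R^{\gamma}\to1$, it suffices to show
\[
\sup_{r\ge R}\Bigl|\int_{\mathbb S^{n-1}} h_r^p-\int_{\mathbb S^{n-1}} A_f\Bigr|\to0
\]
as $R\to\infty$ and $p\to1^+$ jointly. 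Once this holds, the averaging kills the remaining error and we obtain $\frac1n\int A_f$.

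The main obstacle is this uniform estimate, and we split it into two pieces:
\[
\Bigl|\int h_r^p-\int A_f^p\Bigr|+\Bigl|\int A_f^p-\int A_f\Bigr| .
\]
The second piece tends to $0$ by dominated convergence as $p\to1$: we have $A_f\le G_f$, and $A_f^p\le G_f^{p}\le 1+G_f^{p_0}\in L^1$. For the first piece we use the mean value inequality $|a^p-b^p|\le p\,|a-b|\,(a+b)^{p-1}$ together with $h_r,A_f\le G_f$. Applying Hölder with exponents $p_0$ and $p_0'$, it is bounded by
\[
p\,\|h_r-A_f\|_{L^{p_0}(\mathbb S^{n-1})}\,\|(2G_f)^{p-1}\|_{L^{p_0'}(\mathbb S^{n-1})} .
\]
The last factor is bounded uniformly for $p-1\le p_0/p_0'$, because $(2G_f)^{(p-1)p_0'}\le 1+(2G_f)^{p_0}$. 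The first factor tends to $0$ uniformly in $r\ge R$ by the hypothesis. Choosing first $R$ large, then letting $p\to1^+$, and using that the local part vanishes, completes the argument.
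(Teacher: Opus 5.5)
Your proposal is correct and follows essentially the paper's route. Both arguments use H\"older on the bounded region and polar coordinates with $\alpha=n(p-1)$, together with the averaging identity $\alpha\int_R^\infty r^{-\alpha-1}\,dr=R^{-\alpha}\to1$, and then a two-step approximation on the sphere. The only difference is the order of that approximation: the paper compares $F_\theta(r)^p$ with $F_\theta(r)$ via the majorant $1+G_f^{p_0}$ and then $F_\theta(r)$ with $A_f$ in $L^1$, whereas you compare $h_r^p$ with $A_f^p$ by the mean value inequality and H\"older, and then $A_f^p$ with $A_f$.

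The annulus $1\le r\le R_0$ that you flag is also passed over in the paper, which tacitly uses $F_\theta(r)\le G_f(\theta)$ for all $r\ge1$. Your explicit assumption of local $L^{p_0}$ integrability there, as in Theorem~\ref{thm:weak:T}, is the appropriate fix.
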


For \(\gamma = n\), the weighted measure reduces to the Lebesgue measure. The theorem therefore includes the limiting weak-type identities for homogeneous singular integrals and maximal operators studied by Janakiraman~\cite{Janakiraman2005}. That is,
\begin{equation*}
\lim_{\lambda\to0^+}\lambda
\bigl|\{x:Mf(x)>\lambda\}\bigr|=\|f\|_{L^1(\mathbb{R}^n)},
\end{equation*}
and
\begin{equation*}
\lim_{\lambda\to0^+}\lambda
\bigl|\{x:|Tf(x)|>\lambda\}\bigr|
=\frac1n\int_{\mathbb S^{n-1}}\Omega(\theta)\,d\theta\,\|f\|_{L^1(\mathbb{R}^n)}
\end{equation*}
where \(T\) is the singular integral operator with kernel \(\Omega\).

\medskip
\noindent\textbf{4. The Dom\'inguez--Milman question.}
Dom\'inguez and Milman~\cite{DM22} developed a general weak-type framework
for one-parameter families of operators.  Let \((X,m)\) be a
\(\sigma\)-finite measure space, and let
$
\{T_t:t>0\}
$
be a family of operators on \(L^p(X,m)\). Assuming that the associated maximal
operator
$
T^*f=\sup_{t>0}|T_tf|
$
is bounded on \(L^p(X,m)\), \(1<p<\infty\), they proved the estimate
$$
\sup_{\lambda>0}
\lambda^p
(m\times w_\gamma)
\left(
\left\{
(x,t):
\frac{|T_tf(x)|}{t^{\gamma/p}}
>\lambda
\right\}
\right)
\leq
\frac{C_p}{\gamma}
\|f\|_{L^p(X,m)}^p .
$$
Their result naturally raises the endpoint $p=1$ question. The product space results answering the question raised by Dom\'inguez and
Milman were obtained in \cite{DaiLiYangYuanZhao26}. In this paper, we consider the iterated weak estimate for $T_{t}f$. 

\begin{theorem}
\label{thm:DM-1}
Let \((X,m)\) be a \(\sigma\)-finite measure space and let \(\gamma>0\). Suppose that
\(T_tf(x)\to f(x)\) as \(t\to0^+\). Then,
\[
\lim_{\lambda\to\infty}\lambda\,
 w_\gamma\left(\left\{t>0:
 \frac{|T_tf(x)|}{t^\gamma}>\lambda\right\}\right)
=\frac{|f(x)|}{\gamma}
\]
and
\[
\left\|\frac{T_tf(x)}{t^\gamma}\right\|_{L_t^{1,\infty}(w_\gamma)}
\le \frac1\gamma T^*f(x).
\]
Consequently,
\[
\left\|\left\|\frac{T_tf(x)}{t^\gamma}\right\|_{L_t^{1,\infty}(w_\gamma)}
\right\|_{L_x^{1,\infty}(X,m)}
\le \frac1\gamma\|T^*f\|_{L^{1,\infty}(X,m)}.
\]
In particular, if \(T^*:L^1(X,m)\to L^{1,\infty}(X,m)\) has norm at most
\(C\), then the right-hand side is bounded by
\(C\|f\|_{L^1(X,m)}/\gamma\).
\end{theorem}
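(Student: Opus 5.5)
Fix \(x\in X\) and apply Theorem~\ref{thm:BSVY} to the one-variable function
\[
T(t):=|T_tf(x)|,\qquad t>0,
\]
with the parameters \(q=1\) and exponent \(\gamma/q\) replaced by \(\gamma\). In the notation of Theorem~\ref{thm:BSVY} we take \(q=1\) and weight \(w_\gamma\), so the level sets there are exactly \(\{t>0: T(t)/t^{\gamma}>\lambda\}\). The exponents match directly.

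The hypothesis \(T_tf(x)\to f(x)\) gives \(\lim_{t\to0^+}T(t)=|f(x)|\). If \(T^*f(x)=\sup_{t>0}T(t)<\infty\), then \(T\) is bounded and non-negative. Measurability in \(t\) is implicitly assumed, as it is in the definition of \(T^*\). Part (I), formula \eqref{BSY:origin:pos} with \(\gamma>0\) and \(q=1\), then yields
\[
\lim_{\lambda\to\infty}\lambda\,w_\gamma\bigl(\{t:T(t)>\lambda t^\gamma\}\bigr)=\frac{|f(x)|}{\gamma}.
\]
If \(T^*f(x)=\infty\) the limit statement needs a separate look. I would handle this by a direct argument that only uses the behavior near \(0\):
\begin{itemize}
\item For large \(\lambda\), the set \(\{T(t)>\lambda t^\gamma\}\) is contained, up to a small error, in \((0,(A/\lambda)^{1/\gamma})\) with \(A\) close to \(|f(x)|\).
\item The contribution from \(t\ge\delta\) requires \(T(t)>\lambda\delta^\gamma\), which becomes empty only under boundedness.
\end{itemize}
So I would state the limit at points where \(T^*f(x)<\infty\). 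This is the only case where the second assertion is nontrivial anyway.

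The weak-type bound is the universal estimate \eqref{BSY:universal}: \(\gamma\sup_\lambda \lambda w_\gamma(\cdots)\le\sup_t T(t)=T^*f(x)\). It is trivial when \(T^*f(x)=\infty\). The direct proof is one line: \(\{t:T(t)>\lambda t^\gamma\}\subset\{t: t<(T^*f(x)/\lambda)^{1/\gamma}\}\), whose \(w_\gamma\)-measure is \(T^*f(x)/(\gamma\lambda)\).

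Taking the \(L^{1,\infty}_x\) quasinorm of both sides of the pointwise inequality \(\|T_tf(x)/t^\gamma\|_{L^{1,\infty}_t(w_\gamma)}\le\gamma^{-1}T^*f(x)\) gives the iterated estimate, since the weak quasinorm is monotone. The last claim follows by inserting \(\|T^*f\|_{L^{1,\infty}}\le C\|f\|_{L^1}\). The only delicate point is measurability in \(x\) of \(x\mapsto\|T_tf(x)/t^\gamma\|_{L^{1,\infty}_t}\). I would either note that monotonicity only needs outer measure, or restrict the supremum over \(\lambda\) to rationals and assume joint measurability of \((x,t)\mapsto T_tf(x)\), which is standard in this setting.
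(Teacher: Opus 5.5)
Your proposal is correct and follows the paper's own proof. The inclusion $\{t>0:|T_tf(x)|>\lambda t^\gamma\}\subset(0,(T^*f(x)/\lambda)^{1/\gamma})$ gives the pointwise weak bound, monotonicity of the $L^{1,\infty}_x$ quasi-norm gives the iterated bound, and Theorem~\ref{thm:BSVY}(I) with $q=1$ gives the limit. Your restriction of the limit to points where $T^*f(x)<\infty$ is justified: the paper relies on it silently when it invokes Theorem~\ref{thm:BSVY}, which assumes $\sup_{t>0}T(t)<\infty$, and without it the limit can fail (e.g.\ if $|T_tf(x)|\ge t^{2\gamma}$ for large $t$, every level set has infinite $w_\gamma$-measure).
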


\medskip

The structure of the paper is as follows.
Section~2 proves the pointwise BBM-type and BSVY-type endpoint limits.
Section~3 applies these limits to pointwise first- and higher-order
formulas of BBM, MS,
BSVY, GY, and Frank type.
Section~4 studies the endpoint behavior of fractional powers generated by
semigroups, including the fractional Laplacian, the fractional harmonic
oscillator, and the fractional heat operator. Section~5 treats approximation
processes and verifies the general results for Poisson, Caffarelli--Silvestre,
and Gaussian kernels. Section~6 establishes strong and weak limiting formulas
for maximal operators, homogeneous Calder\'on--Zygmund operators, and
directional maximal operators. Finally, Section~7 proves the endpoint
iterated weak-type estimate for one-parameter families and explains its
connection with the Dom\'inguez--Milman framework.

\section{Proofs of Theorems~\ref{thm:BBM} and \ref{thm:BSVY}}

We first verify the elementary identity: for any $\lambda>0$, $\gamma\neq0$ and $L>0$,
\begin{equation}\label{BSY:eq1}
\lambda^q\, w_\gamma\Bigl(\Bigl\{t>0: \frac{L}{t^{\gamma/q}}>\lambda\Bigr\}\Bigr)=\frac{L^q}{|\gamma|}.
\end{equation}
Without loss of generality, we may assume $\lambda=1$; otherwise replace $L$ by $L/\lambda$.

\medskip

\noindent\textbf{Case $\gamma>0$.} The inequality $\frac{L}{t^{\gamma/q}}>1$ is equivalent to $t<L^{q/\gamma}$. Consequently,
\[
w_\gamma\bigl(\{t>0: \tfrac{L}{t^{\gamma/q}}>1\}\bigr)
=\int_{0}^{L^{q/\gamma}} t^{\gamma-1}\,dt
=\frac{L^q}{\gamma},
\]
which establishes \eqref{BSY:eq1} for $\gamma>0$.

\noindent\textbf{Case $\gamma<0$.} Here $\frac{L}{t^{\gamma/q}}>1$ yields $t>L^{q/\gamma}$, and we obtain
\[
w_\gamma\bigl(\{t>0: \tfrac{L}{t^{\gamma/q}}>1\}\bigr)
=\int_{L^{q/\gamma}}^{\infty} t^{\gamma-1}\,dt
=-\frac{L^q}{\gamma}.
\]
Thus \eqref{BSY:eq1} holds for $\gamma<0$ as well.

\medskip

\begin{proof}[Proof of Theorem~\ref{thm:BBM}]
We first prove part (I). Set $T_0:=\lim_{t\to 0^+}T(t)$. We first treat the case $T_0>0$. Fix $\varepsilon\in(0,T_0/2)$. By definition of $T_0$, there exists $\delta>0$ such that
\begin{equation}\label{eq:T0:close}
T_0-\varepsilon \le T(t) \le T_0+\varepsilon \qquad\text{for all } t\in(0,\delta).
\end{equation}
Define
\[
\lambda := \frac{(T_0+\varepsilon)(T_0-\varepsilon)}{T_0-2\varepsilon}.
\]
Note that $\lambda > T_0+\varepsilon$ and $\lambda$ depends only on $T_0$ and $\varepsilon$. For $\gamma>0$ set
\[
t_0 = t_0(\gamma) := \Bigl( \frac{T_0+\varepsilon}{\lambda} \Bigr)^{1/\gamma}.
\]
Then $0<t_0<1$ and $t_0\to 0^{+}$ as $\gamma\to 0^{+}$. For all sufficiently small $\gamma>0$, we have $t_0 \le \delta$.
Split the integral at $t_0$, then
\[
\gamma\int_{0}^{\infty} T(t) \, t^{\gamma-1}\,dt
= \gamma\int_{t_0}^{\infty} T(t) \, t^{\gamma-1}\,dt
  + \gamma\int_{0}^{t_0} T(t) \, t^{\gamma-1}\,dt
=: I_1 + I_2 .
\]

For $I_1$, using the condition $|T(t)|\le C t^{-\eta}$ and taking $\gamma<\eta$, we obtain
\[
|I_1| \le C  \gamma \int_{t_0}^{\infty} t^{\gamma-1-\eta}\,dt
      = C \frac{\gamma}{\eta-\gamma} \, t_0^{\eta-\gamma}
      \le C  \frac{\gamma}{\eta-\gamma},
\]
because $t_0<1$ and $\eta-\gamma>0$. Consequently,
\begin{equation}\label{eq:I1}
\lim_{\gamma\to 0^{+}} I_1 = 0 .
\end{equation}

For $I_2$, observe that
\[
I_2 = \gamma\int_{0}^{\infty}
\chi_{\bigl\{\frac{T_0+\varepsilon}{t^{\gamma}} > \lambda\bigr\}}(t)\,
T(t) \, t^{\gamma-1}\,dt,
\]
since $\frac{T_0+\varepsilon}{t^{\gamma}} > \lambda$ is equivalent to $t < t_0$. By \eqref{eq:T0:close}, for every $t<t_0$ we have $T(t) \le T_0+\varepsilon < \lambda$. Hence
\[
I_2 \le \gamma\int_{0}^{\infty}
\chi_{\bigl\{\frac{T_0+\varepsilon}{t^{\gamma}} > \lambda\bigr\}}(t)\,
\lambda \, t^{\gamma-1}\,dt
= \lambda \gamma\, w_{\gamma}\Bigl( \Bigl\{ t>0 :
\frac{T_0+\varepsilon}{t^{\gamma}} > \lambda \Bigr\} \Bigr).
\]
Applying \eqref{BSY:eq1} with $q=1$ gives
\[
\lambda \, w_{\gamma}\Bigl( \Bigl\{ t>0 :
\frac{T_0+\varepsilon}{t^{\gamma}} > \lambda \Bigr\} \Bigr)
= \frac{T_0+\varepsilon}{\gamma},
\]
so that
\begin{equation}\label{eq:I2}
I_2 \le T_0+\varepsilon .
\end{equation}
On the other hand, for $t\in(0,t_0)$, \eqref{eq:T0:close} implies $T(t) \ge T_0-\varepsilon > 0$. Therefore,
\begin{equation}\label{eq:lower}
I_2 \ge \gamma\int_{0}^{t_0} (T_0-\varepsilon) \, t^{\gamma-1}\,dt
= (T_0-\varepsilon) \, t_0^{\gamma}
= T_0-2\varepsilon .
\end{equation}

Combining \eqref{eq:I1}, \eqref{eq:I2} and \eqref{eq:lower}, we obtain for every $\varepsilon\in(0,T_0/2)$,
\[
T_0-2\varepsilon \le \liminf_{\gamma\to 0^{+}} \gamma\int_{0}^{\infty} T(t) \, t^{\gamma-1}\,dt
\le \limsup_{\gamma\to 0^{+}} \gamma\int_{0}^{\infty} T(t) \, t^{\gamma-1}\,dt
\le T_0+\varepsilon .
\]
Letting $\varepsilon\to 0^{+}$ completes the proof for $T_0>0$.

\medskip

\noindent\textbf{The case $T_0<0$.}
Replacing $T(t)$ by $-T(t)$ reduces this case to the previous one, since $\lim_{t\to0^+}(-T(t)) = -T_0 > 0$.

\medskip

\noindent\textbf{The case $T_0=0$.}
Repeating the above estimates with the lower bound replaced by $0$ yields
\[
0 \le \gamma \int_{0}^{\infty} T(t) \, t^{\gamma-1}\,dt \le \varepsilon,
\]
and the result follows by first taking $\gamma\to 0^{+}$ and then letting $\varepsilon\to 0^{+}$.

\medskip

For part (II), set $t = 1/s$. Then $dt = -s^{-2} ds$, and for $\gamma<0$ we have
\[
\int_{0}^{\infty} T(t) \, t^{\gamma-1}\,dt
= \int_{\infty}^{0} T(1/s) \, s^{-\gamma-1} \cdot (-s^{-2}) \, ds
= \int_{0}^{\infty} T(1/s) \, s^{-\gamma-1} \, ds.
\]
The hypothesis $\lim_{t\to +\infty} T(t) = T_{L^{\infty}}$ is equivalent to $\lim_{s\to 0^+} T(1/s) = T_{L^{\infty}}$.
Moreover,
$$|T(t)| \le C\, t^{\eta} \Longrightarrow |T(1/s)| \le C\, s^{-\eta}.$$
Applying \eqref{BBM:origin} to the function $\tilde T(s)=T(1/s)$ gives
\[
T_{L^{\infty}} = \lim_{\gamma\to 0^+} \gamma \int_{0}^{\infty} T(1/s) \, s^{\gamma-1}\,ds.
\]
Replacing $\gamma$ by $-\gamma$ (note that $-\gamma\to 0^-$ as $\gamma\to 0^+$), we obtain
\[
T_{L^{\infty}} = \lim_{\gamma\to 0^-} (-\gamma) \int_{0}^{\infty} T(t) \, t^{\gamma-1}\,dt,
\]
which is exactly \eqref{BBM:infinity}.
\end{proof}

\begin{proof}[Proof of Theorem~\ref{thm:BSVY}]
Assume first that \(\gamma>0\) and \(T(t)\to T_0\) as \(t\to0^+\). With
\(t=\lambda^{-q/\gamma}r\),
\begin{align*}
 &\lambda^q w_\gamma\left(\left\{t:
 \frac{T(t)}{t^{\gamma/q}}>\lambda\right\}\right)=\int_0^\infty
 \one_{\{T(\lambda^{-q/\gamma}r)>r^{\gamma/q}\}}
 r^{\gamma-1}\,dr.
\end{align*}
Write \(M=\sup_{t>0}T(t)\), then
$$\int_0^\infty
 \one_{\{T(\lambda^{-q/\gamma}r)>r^{\gamma/q}\}}
 r^{\gamma-1}\,dr\leq \int_0^\infty
 \one_{\{M>r^{\gamma/q}\}}
 r^{\gamma-1}\,dr=\frac{M^q}{\gamma}<\infty.$$
Applying the dominated convergence theorem, we obtain
\begin{align*}
 \lim_{\lambda\to +\infty}\lambda^q w_\gamma\left(\left\{t:
 \frac{T(t)}{t^{\gamma/q}}>\lambda\right\}\right)
 &=\int_0^\infty \lim_{\lambda\to +\infty}
 \one_{\{T(\lambda^{-q/\gamma}r)>r^{\gamma/q}\}}
 r^{\gamma-1}\,dr\\
 &=\int_0^\infty
 \one_{\{T_{0}>r^{\gamma/q}\}}
 r^{\gamma-1}\,dr\\
 & =\int_0^{T_0^{q/\gamma}}r^{\gamma-1}\,dr
 =\frac{T_0^q}{\gamma}.
\end{align*}
Taking the \(q\)-th root proves \eqref{BSY:origin:pos}.

If \(\gamma<0\), \(t\to0^+\) as $\lambda\to 0^+$. Since
\(\lambda^{-q/\gamma}\to0\), dominated convergence gives
\[
 \lim_{\lambda\to0^+}\lambda^q
 w_\gamma\left(\left\{t>0:
 \frac{T(t)}{t^{\gamma/q}}>\lambda\right\}\right)
 =\frac{T_0^q}{-\gamma},
\]
which is \eqref{BSY:origin:neg}.

The cases at infinity are similar: for
\(\gamma>0\), let \(\lambda\to0^+\), so that
\(\lambda^{-q/\gamma}\to\infty\); for \(\gamma<0\), let
\(\lambda\to\infty\). This yields \eqref{BSY:infinity:pos} and
\eqref{BSY:infinity:neg}.

Finally, the inclusion
\[
 \left\{t:\frac{T(t)}{t^{\gamma/q}}>\lambda\right\}
 \subset
 \left\{t:\frac{M}{t^{\gamma/q}}>\lambda\right\}
\]
and the identity \eqref{BSY:eq1} imply
\[
 \sup_{\lambda>0}\lambda^q
 w_\gamma\left(\left\{t>0:
 \frac{T(t)}{t^{\gamma/q}}>\lambda\right\}\right)
 \le\frac{M^q}{|\gamma|}.
\]
This is equivalent to \eqref{BSY:universal}.
\end{proof}

\section{Pointwise endpoint formulas}

Let \(x\in\mathbb R^n\). For every non-negative measurable function
\(F\),
\begin{equation*}
 \int_{\mathbb R^n}F(x-y)\,dy
 =\int_{\mathbb S^{n-1}}\int_0^\infty F(x+t\theta)t^{n-1}\,dt\,d\theta.
\end{equation*}
Consequently,
\begin{equation}\label{eq:polar-weighted}
 \int_E|x-y|^{\gamma-n}\,dy
 =\int_{\mathbb S^{n-1}}\int_0^\infty
 \one_E(x+t\theta)t^{\gamma-1}\,dt\,d\theta.
\end{equation}
Formula \eqref{eq:polar-weighted} is the bridge from the one-dimensional
measure \(w_\gamma\) to the weighted level sets used in the BSVY and
Gu--Yung formulas.

For \(q>0\), set
\begin{equation*}
 K_{n,q}:=\int_{\mathbb S^{n-1}}|e\cdot\theta|^q\,d\theta,
 \qquad e\in\mathbb S^{n-1}.
\end{equation*}
Rotational invariance shows that the value is independent of \(e\), and
\[
 \int_{\mathbb S^{n-1}}|v\cdot\theta|^q\,d\theta
 =K_{n,q}|v|^q.
\]
When desired, one may write
\[
 K_{n,q}=2\pi^{(n-1)/2}
 \frac{\Gamma((q+1)/2)}{\Gamma((n+q)/2)}.
\]
In particular, for every
\(v\in\mathbb R^n\), rotational invariance and homogeneity yield
\begin{equation}\label{eq:rotational-K}
 \int_{\mathbb S^{n-1}}|v\cdot\theta|^q\,d\theta
 =K_{n,q}|v|^q.
\end{equation}
We keep the integral notation because it also has a direct higher-order
analogue for symmetric tensors.

\medskip

All pointwise identities in this section follow the same pattern. The two basic types are
\begin{equation}\label{BSVY:eq1}
 A_{x,\theta}(t)=\frac{|f(x+t\theta)-f(x)|}{t},
 \qquad
 B_{x,\theta}(t)=|f(x+t\theta)-f(x)|.
\end{equation}
If \(f\in C_c^1\), then
\begin{equation}\label{BSVY:eq2}
\lim_{t\to0^+} A_{x,\theta}(t)=|\nabla f(x)\cdot\theta|\qquad \text{and} \qquad
\lim_{t\to+\infty} B_{x,\theta}(t)=|f(x)|.
\end{equation}
The fundamental theorem of calculus gives
\begin{equation}\label{BSVY:eq3}
 A_{x,\theta}(t)
 \le \frac1t\int_0^t|\nabla f(x+r\theta)|\,dr
 \le M_\theta(|\nabla f|)(x).
\end{equation}
Here
\[
 M_\theta g(x):=\sup_{t>0}\frac1t\int_0^t|g(x+r\theta)|\,dr
\]
denotes the directional maximal operator.

\subsection{Pointwise BBM formula}

For \(0<s<1\), set
\[
\mathcal D_{s,q}f(x):=
\left(\int_{\mathbb R^n}\frac{|f(x)-f(y)|^q}{|x-y|^{n+sq}}\,dy\right)^{1/q}.
\]
The following is the pointwise counterpart of the BBM formula
\cite{BBM01,Bre02}; related extensions appear in
\cite{ACPS20,BSY23,DGP+24}.

\begin{theorem}
Let \(0<q<\infty\) and \(f\in C_c^1(\mathbb R^n)\). Then, for every \(x\),
\begin{equation*}
\lim_{s\to1^-}\bigl(q(1-s)\bigr)^{1/q}\mathcal D_{s,q}f(x)
=K_{n,q}^{1/q}|\nabla f(x)|.
\end{equation*}
\end{theorem}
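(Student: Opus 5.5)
Write the $q$-th power in polar coordinates about $x$. Using the polar formula preceding \eqref{eq:polar-weighted} with $y=x+t\theta$,
\[
\mathcal D_{s,q}f(x)^q=\int_{\mathbb S^{n-1}}\int_0^\infty \frac{|f(x+t\theta)-f(x)|^q}{t^{sq}}\,\frac{dt}{t}\,d\theta
=\int_0^\infty T_{q,f}(t)\,t^{q(1-s)-1}\,dt,
\]
where
\[
T(t):=T_{q,f}(t)=\int_{\mathbb S^{n-1}}A_{x,\theta}(t)^q\,d\theta
\]
and $A_{x,\theta}$ is as in \eqref{BSVY:eq1}. The interchange of integrals is justified by Tonelli, since everything is non-negative. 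Put $\gamma:=q(1-s)$, so that $s\to1^-$ corresponds to $\gamma\to0^+$. Then
\[
q(1-s)\,\mathcal D_{s,q}f(x)^q=\gamma\int_0^\infty T(t)t^{\gamma-1}\,dt.
\]
The claim therefore follows from Theorem~\ref{thm:BBM}(I) together with continuity of $u\mapsto u^{1/q}$, once we check two facts: $\lim_{t\to0^+}T(t)=K_{n,q}|\nabla f(x)|^q$, and $|T(t)|\le Ct^{-\eta}$ for $t\ge1$.

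\textbf{Limit at the origin.} By \eqref{BSVY:eq2}, $A_{x,\theta}(t)\to|\nabla f(x)\cdot\theta|$ for each $\theta$. By \eqref{BSVY:eq3}, $A_{x,\theta}(t)\le\|\nabla f\|_{L^\infty}$ uniformly in $\theta$ and $t$, and this bound is finite because $f\in C_c^1$. Dominated convergence on the finite-measure sphere then gives
\[
T(t)\to\int_{\mathbb S^{n-1}}|\nabla f(x)\cdot\theta|^q\,d\theta=K_{n,q}|\nabla f(x)|^q,
\]
where the last equality is \eqref{eq:rotational-K}.

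\textbf{Decay at infinity.} For $t\ge1$ we have $|f(x+t\theta)-f(x)|\le 2\|f\|_{L^\infty}$. Hence
\[
T(t)\le |\mathbb S^{n-1}|\,(2\|f\|_{L^\infty})^q\,t^{-q},
\]
so the hypothesis of Theorem~\ref{thm:BBM}(I) holds with $\eta=q>0$. Since $T$ is also bounded near $0$, every integral above is finite for $0<\gamma<q$, i.e.\ for $s\in(0,1)$.

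No step here is a real obstacle. The only points needing care are the measurability of $T$ and the correct exponent bookkeeping: one must check that $t^{n-1}\cdot t^{-(n+sq)}\cdot t^{q}=t^{q(1-s)-1}$, so that the weight is exactly $t^{\gamma-1}$ with $\gamma=q(1-s)$. Measurability of $T$ holds because $(t,\theta)\mapsto A_{x,\theta}(t)$ is continuous.
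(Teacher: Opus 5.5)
Your proposal is correct and follows the paper's proof essentially verbatim. It uses the same profile $T(t)=t^{-q}\int_{\mathbb S^{n-1}}|f(x+t\theta)-f(x)|^q\,d\theta$, the substitution $\gamma=q(1-s)$, dominated convergence for $\lim_{t\to0^+}T(t)=K_{n,q}|\nabla f(x)|^q$, the decay $T(t)\lesssim t^{-q}$ for $t\ge1$, and then Theorem~\ref{thm:BBM}(I). Your explicit majorant $\|\nabla f\|_{L^\infty}^q$ is a welcome addition: it shows $T$ is bounded on all of $(0,\infty)$, which supplies the local integrability on $[\delta,1]$ that Theorem~\ref{thm:BBM}(I) tacitly needs but does not list among its hypotheses.
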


\begin{proof}
Define
\[
T(t)=t^{-q}\int_{\mathbb S^{n-1}}|f(x+t\theta)-f(x)|^q\,d\theta.
\]
Dominated convergence, \eqref{BSVY:eq1} and \eqref{BSVY:eq2} give
\begin{align*}
\lim_{t\to0^+}T(t)&=\int_{\mathbb S^{n-1}}\lim_{t\to 0^+}\frac{|f(x+t\theta)-f(x)|^q}{t^q}\,d\theta\\
&=\int_{\mathbb S^{n-1}}\lim_{t\to 0^+}A_{x,\theta}^q\,d\theta=K_{n,q}|\nabla f(x)|^q.
\end{align*}
Moreover \(T(t)\lesssim t^{-q}\) for large \(t\). For \(\gamma=q(1-s)\), one has
\begin{equation*}
q(1-s)\mathcal D_{s,q}f(x)^q
=\gamma\int_0^\infty T(t)t^{\gamma-1}\,dt.
\end{equation*}
Theorem~\ref{thm:BBM}(I) yields
\begin{equation*}
\lim_{\gamma\to0^+}\gamma\int_0^\infty T(t)t^{\gamma-1}\,dt
=\lim_{t\to0^+}T(t).
\end{equation*}
Combining the last three displays proves the claim.
\end{proof}

\begin{remark}
We point out that the pointwise BBM formula established above extends the result obtained by Claros and P\'{e}rez in \cite[Lemma~2.1]{CP25} in two directions: it treats the full range \(0<q<\infty\) instead of just \(q=1\), and it requires only \(f\in C_c^1(\mathbb{R}^n)\) rather than \(f\in C_c^2(\mathbb{R}^n)\).
\end{remark}

\subsection{Pointwise MS formula}
In this subsection we prove a pointwise version of the MS formula. The proof relies on the behaviour of the fractional gradient as \(t \to +\infty\), which falls precisely within the scope of Theorem~\ref{thm:BBM}(II).

\begin{theorem}
Let \(0<q<\infty\) and \(f\in C_c^\eta(\mathbb R^n)\), \(0<\eta\le1\).
Then, for every \(x\),
\begin{equation}
\label{eq:MS-pointwise-result}
\lim_{s\to0^+}(qs)^{1/q}\mathcal D_{s,q}f(x)
=|\mathbb S^{n-1}|^{1/q}|f(x)|.
\end{equation}
\end{theorem}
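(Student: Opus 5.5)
We follow the same scheme as the pointwise BBM formula, now invoking Theorem~\ref{thm:BBM}(II) instead of (I). Fix \(x\) and define
\[
T(t)=\int_{\mathbb S^{n-1}}|f(x+t\theta)-f(x)|^q\,d\theta
=\int_{\mathbb S^{n-1}}B_{x,\theta}(t)^q\,d\theta ,
\]
with \(B_{x,\theta}\) as in \eqref{BSVY:eq1}. Polar coordinates \(y=x+t\theta\) give
\[
\mathcal D_{s,q}f(x)^q=\int_{\mathbb S^{n-1}}\int_0^\infty \frac{|f(x+t\theta)-f(x)|^q}{t^{n+sq}}\,t^{n-1}\,dt\,d\theta
=\int_0^\infty T(t)\,t^{-sq-1}\,dt .
\]
Setting \(\gamma=-qs<0\), this becomes
\[
qs\,\mathcal D_{s,q}f(x)^q=(-\gamma)\int_0^\infty T(t)t^{\gamma-1}\,dt .
\]
Moreover, \(s\to0^+\) corresponds exactly to \(\gamma\to0^-\).

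Next we verify the hypotheses of Theorem~\ref{thm:BBM}(II).

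\textbf{Limit at infinity.} Since \(f\) has compact support, for each \(\theta\) we have \(f(x+t\theta)=0\) once \(t\) exceeds \(|x|+\operatorname{diam}\) of the support. Hence \(B_{x,\theta}(t)=|f(x)|\) for all large \(t\), uniformly in \(\theta\), which is the second limit in \eqref{BSVY:eq2}. It follows that \(T(t)=|\mathbb S^{n-1}||f(x)|^q\) for all large \(t\), so \(\lim_{t\to\infty}T(t)=|\mathbb S^{n-1}||f(x)|^q\). No dominated convergence argument is needed, because the limit is attained exactly.

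\textbf{Polynomial bound near zero.} Here the Hölder continuity is used. From \(f\in C^\eta_c\) we get \(|f(x+t\theta)-f(x)|\le [f]_{C^\eta}t^\eta\). Hence \(T(t)\le |\mathbb S^{n-1}|[f]_{C^\eta}^q\,t^{q\eta}\) for \(0<t\le1\), which is the required bound \(|T(t)|\le Ct^{\eta'}\) with exponent \(\eta'=q\eta>0\). Measurability of \(T\) follows from Tonelli's theorem, since \(f\) is continuous.

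Applying \eqref{BBM:infinity} then yields
\[
\lim_{s\to0^+}qs\,\mathcal D_{s,q}f(x)^q=|\mathbb S^{n-1}||f(x)|^q ,
\]
and taking \(q\)-th roots, which is continuous on \([0,\infty)\), gives \eqref{eq:MS-pointwise-result}.

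There is no serious obstacle; the only point needing care is that near \(t=0\) the kernel \(t^{-sq-1}\) is not integrable against a merely bounded \(T\). This is exactly why the Hölder hypothesis, rather than mere continuity, is imposed: it ensures both that \(\mathcal D_{s,q}f(x)\) is finite for \(s<\eta\) and that the tail estimate \(I_1\) in the proof of Theorem~\ref{thm:BBM} vanishes. Since the interval \(s\in(0,\eta)\) contains all small \(s\), this suffices for the limit.
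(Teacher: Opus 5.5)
Your proposal is correct and follows the paper's proof essentially verbatim: the same profile $T(t)=\int_{\mathbb S^{n-1}}|f(x+t\theta)-f(x)|^q\,d\theta$, the same polar-coordinate identity with $\gamma=-qs$, the same H\"older bound $T(t)\le|\mathbb S^{n-1}|[f]_{C^\eta}^q t^{q\eta}$ on $(0,1]$, and then Theorem~\ref{thm:BBM}(II). The only difference is that you note $T$ is eventually constant, so the limit at infinity is attained exactly, whereas the paper uses dominated convergence; the threshold for that should be $t>\sup_{y\in\operatorname{supp}f}|y-x|$, not $|x|+\operatorname{diam}(\operatorname{supp}f)$, which fails when the support lies far from the origin.
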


\begin{proof}
Fix \(x\in\mathbb R^n\) and define
\[
 T(t):=\int_{\mathbb S^{n-1}}
 |f(x+t\theta)-f(x)|^q\,d\theta,
 \qquad t>0.
\]
Since \(f\) has compact support, for every \(\theta\in\mathbb S^{n-1}\),
\[
 f(x+t\theta)\longrightarrow0\qquad(t\to\infty).
\]
Moreover, \(|f(x+t\theta)-f(x)|^q\le(2\|f\|_{L^{\infty}})^q\), and hence
spherical dominated convergence gives
\begin{equation}
\label{eq:MS-profile-limit}
 \lim_{t\to\infty}T(t)
 =\int_{\mathbb S^{n-1}}|f(x)|^q\,d\theta
 =|\mathbb S^{n-1}|\,|f(x)|^q.
\end{equation}
For \(0<t\le1\), the H\"older continuity of \(f\) yields
\begin{equation*}
 T(t)
 \le |\mathbb S^{n-1}|[f]_{C^\eta}^q t^{q\eta}.
\end{equation*}
Thus the profile satisfies the small-scale hypothesis of
Theorem~\ref{thm:BBM}(II).

Passing to polar coordinates, we obtain
\begin{align*}
 \mathcal D_{s,q}f(x)^q
 &=\int_{\mathbb S^{n-1}}\int_0^\infty
   |f(x+t\theta)-f(x)|^q t^{-sq-1}\,dt\,d\theta\\
 &=\int_0^\infty T(t)t^{-sq-1}\,dt.
\end{align*}
Set \(\gamma=-sq<0\). Then
\begin{equation*}
 qs\mathcal D_{s,q}f(x)^q
 =(-\gamma)\int_0^\infty T(t)t^{\gamma-1}\,dt.
\end{equation*}
As \(s\to0^+\), one has \(\gamma\to0^-\). Applying
Theorem~\ref{thm:BBM}(II) and using \eqref{eq:MS-profile-limit},
\[
 \lim_{s\to0^+}qs\mathcal D_{s,q}f(x)^q
 =|\mathbb S^{n-1}|\,|f(x)|^q.
\]
Taking the positive \(q\)-th root proves
\eqref{eq:MS-pointwise-result}.
\end{proof}

\subsection{Pointwise BSVY formula}

For fixed \(x\) define
\[
E_x^{(1)}(f,\lambda,q,\gamma)
:=\{y:|f(x)-f(y)|>\lambda|x-y|^{1+\gamma/q}\}.
\]

\begin{theorem}
Let \(\gamma\ne0\), \(0<q<\infty\), and \(f\in C_c^1(\mathbb R^n)\).
For every \(x\),
\[
\begin{cases}
\displaystyle
\lim_{\lambda\to\infty}\lambda
\left(\int_{E_x^{(1)}(f,\lambda,q,\gamma)}|x-y|^{\gamma-n}\,dy\right)^{1/q}
=\dfrac{K_{n,q}^{1/q}}{\gamma^{1/q}}|\nabla f(x)|,
&\gamma>0,\\[12pt]
\displaystyle
\lim_{\lambda\to0^+}\lambda
\left(\int_{E_x^{(1)}(f,\lambda,q,\gamma)}|x-y|^{\gamma-n}\,dy\right)^{1/q}
=\dfrac{K_{n,q}^{1/q}}{(-\gamma)^{1/q}}|\nabla f(x)|,
&\gamma<0.
\end{cases}
\]
Moreover,
\[
\sup_{\lambda>0}\lambda
\left(\int_{E_x^{(1)}(f,\lambda,q,\gamma)}|x-y|^{\gamma-n}\,dy\right)^{1/q}
\le |\gamma|^{-1/q}
\left(\int_{\mathbb S^{n-1}}M_\theta(|\nabla f|)(x)^q\,d\theta\right)^{1/q},
\]
\end{theorem}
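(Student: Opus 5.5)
The plan is to reduce the $n$-dimensional weighted level set to a family of one-dimensional level sets, one for each direction, and then apply Theorem~\ref{thm:BSVY} direction by direction. Throughout, I will work with the $q$-th powers, that is with $\lambda^q \int_{E_x^{(1)}}|x-y|^{\gamma-n}\,dy$, and take $q$-th roots only at the end.

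\textbf{Step 1: polar reduction.} Writing $y=x+t\theta$ with $t>0$ and $\theta\in\mathbb S^{n-1}$, the defining condition of $E_x^{(1)}(f,\lambda,q,\gamma)$ becomes
\[
\frac{A_{x,\theta}(t)}{t^{\gamma/q}}>\lambda ,
\]
with $A_{x,\theta}$ as in \eqref{BSVY:eq1}. The point $y=x$ is a null set and can be ignored. Formula \eqref{eq:polar-weighted} then gives
\[
\lambda^q\int_{E_x^{(1)}}|x-y|^{\gamma-n}\,dy
=\int_{\mathbb S^{n-1}}\lambda^q\,w_\gamma\Bigl(\Bigl\{t>0:\tfrac{A_{x,\theta}(t)}{t^{\gamma/q}}>\lambda\Bigr\}\Bigr)\,d\theta .
\]
Measurability of the integrand in $(\theta,t)$ is clear, since $A_{x,\theta}(t)$ is continuous in $(\theta,t)$ on $\mathbb S^{n-1}\times(0,\infty)$.

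\textbf{Step 2: one-dimensional limits.} Fix $\theta$ and apply Theorem~\ref{thm:BSVY}(I) with $T=A_{x,\theta}$. This function is non-negative, and $\sup_t A_{x,\theta}(t)\le\|\nabla f\|_{L^\infty}<\infty$ by \eqref{BSVY:eq3}. Its limit at $0^+$ is $|\nabla f(x)\cdot\theta|$ by \eqref{BSVY:eq2}. The proof of Theorem~\ref{thm:BSVY} actually yields the $q$-th power form: for $\gamma>0$ as $\lambda\to\infty$, and for $\gamma<0$ as $\lambda\to0^+$, we get
\[
\lambda^q w_\gamma(\cdots)\to \frac{|\nabla f(x)\cdot\theta|^q}{|\gamma|}.
\]

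\textbf{Step 3: passing the limit through the sphere.} The universal estimate \eqref{BSY:universal}, in its $q$-th power form, bounds the integrand uniformly in $\lambda$ by
\[
\frac{\bigl(\sup_t A_{x,\theta}(t)\bigr)^q}{|\gamma|}\le \frac{\|\nabla f\|_{L^\infty}^q}{|\gamma|},
\]
which is integrable on $\mathbb S^{n-1}$. Dominated convergence therefore gives the limit
\[
|\gamma|^{-1}\int_{\mathbb S^{n-1}}|\nabla f(x)\cdot\theta|^q\,d\theta=\frac{K_{n,q}\,|\nabla f(x)|^q}{|\gamma|},
\]
where the last equality is \eqref{eq:rotational-K}. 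Taking $q$-th roots gives both asymptotic formulas.

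\textbf{Step 4: the supremum bound.} For each $\lambda$, bound the integrand pointwise in $\theta$ by $\bigl(\sup_t A_{x,\theta}(t)\bigr)^q/|\gamma|$, again using \eqref{BSY:universal}. By \eqref{BSVY:eq3}, this is at most $M_\theta(|\nabla f|)(x)^q/|\gamma|$. Integrate over $\theta$, take the supremum over $\lambda$ (the right-hand side is independent of $\lambda$), and then take $q$-th roots.

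\textbf{Main obstacle.} The only delicate point is the bookkeeping between the $1/q$-root formulation of Theorem~\ref{thm:BSVY} and the $\lambda^q$ form needed here. Dominated convergence must be applied to the $\lambda^q$-weighted, un-rooted quantities, which is exactly what the proof of Theorem~\ref{thm:BSVY} provides. The rest is routine once the uniform-in-$\lambda$ domination from \eqref{BSY:universal} is in hand.
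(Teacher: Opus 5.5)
Your proposal is correct and follows the paper's proof essentially step for step. You use the polar reduction via \eqref{eq:polar-weighted}, apply Theorem~\ref{thm:BSVY}(I) direction by direction, pass the limit through $\mathbb S^{n-1}$ by dominated convergence using the universal estimate, and finish with \eqref{eq:rotational-K}; the supremum bound comes from integrating the universal estimate combined with \eqref{BSVY:eq3}. The only cosmetic difference is that you dominate by the constant $\|\nabla f\|_{L^\infty}^q/|\gamma|$ instead of $M_\theta(|\nabla f|)(x)^q/|\gamma|$, which makes the integrability on the sphere immediate.
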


\begin{proof}
For \(\theta\in\mathbb S^{n-1}\), put
\[
 T(t):=\frac{|f(x+t\theta)-f(x)|}{t},
 \qquad t>0.
\]
The polar-coordinate identity \eqref{eq:polar-weighted} gives
\begin{equation}
\label{eq:BSVY-pointwise-polar}
 \int_{E_x^{(1)}(f,\lambda,q,\gamma)}|x-y|^{\gamma-n}\,dy
 =\int_{\mathbb S^{n-1}}
 w_\gamma\left(\left\{t>0:
 \frac{T(t)}{t^{\gamma/q}}>\lambda\right\}\right)d\theta.
\end{equation}
The directional derivative and the fundamental theorem of calculus yield
\begin{equation*}
 \lim_{t\to0^+}T(t)=|\nabla f(x)\cdot\theta|,
 \qquad
 T(t)
 \le M_\theta(|\nabla f|)(x).
\end{equation*}
Assume first that \(\gamma>0\). By
Theorem~\ref{thm:BSVY}(I), for every \(\theta\),
\[
 \lambda^q w_\gamma\left(\left\{t:
 \frac{T(t)}{t^{\gamma/q}}>\lambda\right\}\right)
 \longrightarrow
 \frac{|\nabla f(x)\cdot\theta|^q}{\gamma}
 \qquad(\lambda\to\infty).
\]
The universal estimate and \eqref{BSVY:eq3} imply that
\[
 0\le \lambda^q w_\gamma\left(\left\{t:
 \frac{T(t)}{t^{\gamma/q}}>\lambda\right\}\right)
 \le\frac{M_\theta(|\nabla f|)(x)^q}{\gamma}.
\]
For \(f\in C_c^1\), the right-hand side is integrable on the sphere. Hence,
by dominated convergence and \eqref{eq:rotational-K},
\begin{align*}
 &\lim_{\lambda\to\infty}\lambda^q
   \int_{E_x^{(1)}(f,\lambda,q,\gamma)}|x-y|^{\gamma-n}\,dy\\
 &=\frac1\gamma\int_{\mathbb S^{n-1}}
   |\nabla f(x)\cdot\theta|^q\,d\theta
 =\frac{K_{n,q}}\gamma|\nabla f(x)|^q.
\end{align*}
Taking the \(q\)-th root proves the first formula.

If \(\gamma<0\), the same argument uses the origin limit
\(\lambda\to0^+\) in Theorem~\ref{thm:BSVY}(I), and gives
\[
 \lim_{\lambda\to0^+}\lambda^q
   \int_{E_x^{(1)}(f,\lambda,q,\gamma)}|x-y|^{\gamma-n}\,dy
 =\frac{K_{n,q}}{-\gamma}|\nabla f(x)|^q.
\]
Finally, integrate the universal bound in \(\theta\) in
\eqref{eq:BSVY-pointwise-polar} and take the \(q\)-th root:
\[
 \sup_{\lambda>0}\lambda
 \left(\int_{E_x^{(1)}(f,\lambda,q,\gamma)}|x-y|^{\gamma-n}\,dy\right)^{1/q}
 \le |\gamma|^{-1/q}
 \left(\int_{\mathbb S^{n-1}}
 M_\theta(|\nabla f|)(x)^q\,d\theta\right)^{1/q}.
\]
\end{proof}

\begin{remark}
As shown by Calder\'{o}n \cite[Lemma 7]{Calderon72}, \(M_\theta g(x)\) is bounded on \(L^p (1<p<\infty)\)  uniformly in $\theta$. Thus, by the preceding theorem we can obtain the $L^p$-BSVY formula.
\end{remark}

\subsection{Pointwise GY formula}

Define
\[
E_x^{(0)}(f,\lambda,q,\gamma)
:=\{y:|f(x)-f(y)|>\lambda|x-y|^{\gamma/q}\}.
\]

\begin{theorem}
Let \(\gamma\ne0\), \(0<q<\infty\), and \(f\in C_c^\eta(\mathbb R^n)\).
Then
\[
\begin{cases}
\displaystyle
\lim_{\lambda\to0^+}\lambda
\left(\int_{E_x^{(0)}(f,\lambda,q,\gamma)}|x-y|^{\gamma-n}\,dy\right)^{1/q}
=\dfrac{|\mathbb S^{n-1}|^{1/q}}{\gamma^{1/q}}|f(x)|,
&\gamma>0,\\[12pt]
\displaystyle
\lim_{\lambda\to\infty}\lambda
\left(\int_{E_x^{(0)}(f,\lambda,q,\gamma)}|x-y|^{\gamma-n}\,dy\right)^{1/q}
=\dfrac{|\mathbb S^{n-1}|^{1/q}}{(-\gamma)^{1/q}}|f(x)|,
&\gamma<0.
\end{cases}
\]
\end{theorem}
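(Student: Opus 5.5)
We follow the same scheme as for the pointwise BSVY formula, now with the profile $B_{x,\theta}$ from \eqref{BSVY:eq1} and the behaviour at infinity, i.e.\ Theorem~\ref{thm:BSVY}(II). Fix $x$ and, for each $\theta\in\mathbb S^{n-1}$, put
\[
 T_\theta(t):=|f(x+t\theta)-f(x)|,\qquad t>0 .
\]
Since $y\in E_x^{(0)}(f,\lambda,q,\gamma)$ with $y=x+t\theta$ means $T_\theta(t)/t^{\gamma/q}>\lambda$, the polar identity \eqref{eq:polar-weighted} gives
\[
 \int_{E_x^{(0)}(f,\lambda,q,\gamma)}|x-y|^{\gamma-n}\,dy
 =\int_{\mathbb S^{n-1}}
 w_\gamma\left(\left\{t>0:\frac{T_\theta(t)}{t^{\gamma/q}}>\lambda\right\}\right)d\theta .
\]
So the whole problem reduces to the one-dimensional measure $w_\gamma$ applied direction by direction, followed by an integration over the sphere.

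Next we check the hypotheses of Theorem~\ref{thm:BSVY} for each $T_\theta$. It is non-negative and measurable, being continuous in $t$. It is bounded: $\sup_{t>0}T_\theta(t)\le 2\|f\|_{L^\infty}$, uniformly in $\theta$. Since $f$ has compact support, $f(x+t\theta)=0$ for $t$ large, so by \eqref{BSVY:eq2}
\[
 \lim_{t\to+\infty}T_\theta(t)=|f(x)|
\]
for every $\theta$. The H\"older hypothesis is not needed for this part of the argument; it only guarantees measurability and boundedness.

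Now apply Theorem~\ref{thm:BSVY}(II). For $\gamma>0$, \eqref{BSY:infinity:pos} (raised to the $q$-th power) gives, for each $\theta$,
\[
 \lambda^q w_\gamma\left(\left\{t>0:\frac{T_\theta(t)}{t^{\gamma/q}}>\lambda\right\}\right)\longrightarrow \frac{|f(x)|^q}{\gamma}\qquad(\lambda\to0^+).
\]
For $\gamma<0$, \eqref{BSY:infinity:neg} gives the limit $|f(x)|^q/(-\gamma)$ as $\lambda\to\infty$. To pass the limit through the $\theta$-integral, use the universal estimate \eqref{BSY:universal}: it bounds each integrand by
\[
 \frac{(2\|f\|_{L^\infty})^q}{|\gamma|},
\]
a constant, which is integrable on the compact sphere. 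Dominated convergence then yields
\[
 \lambda^q\int_{E_x^{(0)}}|x-y|^{\gamma-n}\,dy\;\longrightarrow\;\frac{|\mathbb S^{n-1}|\,|f(x)|^q}{|\gamma|}
\]
in the respective regimes of $\lambda$. Taking $q$-th roots gives both formulas in the statement.

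There is no real obstacle; the one point to check is the interpretation of the limit as $\lambda\to0^+$ or $\lambda\to\infty$. The proof of Theorem~\ref{thm:BSVY}(II) rests on the rescaling $t=\lambda^{-q/\gamma}r$, and we need $\lambda^{-q/\gamma}\to\infty$ in each case: this holds for $\gamma>0$ with $\lambda\to0^+$, and for $\gamma<0$ with $\lambda\to\infty$. This confirms that the stated directions of the limits match the behaviour at infinity. The dominated convergence inside that proof uses only $\sup T_\theta<\infty$, which holds uniformly in $\theta$. If one also wants a universal bound, integrating \eqref{BSY:universal} over the sphere gives
\[
 \sup_{\lambda>0}\lambda\left(\int_{E_x^{(0)}}|x-y|^{\gamma-n}\,dy\right)^{1/q}\le |\gamma|^{-1/q}|\mathbb S^{n-1}|^{1/q}\,2\|f\|_{L^\infty} .
\]
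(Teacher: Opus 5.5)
Your proposal is correct and matches the paper's proof essentially step for step. Both use the same profile $T(t)=|f(x+t\theta)-f(x)|$, the same polar-coordinate reduction to $w_\gamma$, Theorem~\ref{thm:BSVY}(II) applied direction by direction, and dominated convergence over $\mathbb S^{n-1}$ with the constant majorant $(2\|f\|_{L^\infty})^q/|\gamma|$ coming from the universal estimate.
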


\begin{proof}
For \(\theta\in\mathbb S^{n-1}\), define
\[
 T(t):=|f(x+t\theta)-f(x)|.
\]
Polar coordinates show that
\begin{equation*}
 \int_{E_x^{(0)}(f,\lambda,q,\gamma)}|x-y|^{\gamma-n}\,dy
 =\int_{\mathbb S^{n-1}}
 w_\gamma\left(\left\{t>0:
 \frac{T(t)}{t^{\gamma/q}}>\lambda\right\}\right)d\theta.
\end{equation*}
Because \(f\) is compactly supported,
\[
 \lim_{t\to\infty}T(t)=|f(x)|,
 \qquad
 0\le T(t)\le2\|f\|_{L^{\infty}}.
\]
For \(\gamma>0\), Theorem~\ref{thm:BSVY}(II) gives
\[
 \lim_{\lambda\to0^+}\lambda^q w_\gamma\left(\left\{t:
 \frac{T(t)}{t^{\gamma/q}}>\lambda\right\}\right)
 =\frac{|f(x)|^q}{\gamma},
\]
whereas for \(\gamma<0\),
\[
 \lim_{\lambda\to +\infty}\lambda^q w_\gamma\left(\left\{t:
 \frac{T(t)}{t^{\gamma/q}}>\lambda\right\}\right)
 =\frac{|f(x)|^q}{-\gamma}.
\]
In both cases the universal estimate supplies the uniform bound
\[
 \lambda^q w_\gamma\left(\left\{t>0:
 \frac{T(t)}{t^{\gamma/q}}>\lambda\right\}\right)
 \le\frac{(2\|f\|_{L^{\infty}})^q}{|\gamma|}.
\]
Dominated convergence in \(\theta\), followed by taking the \(q\)-th root,
proves the two asserted limits.
\end{proof}

Observe that the radial maximal quantity
\[
 \sup_{t>0}|f(x+t\theta)-f(x)|
\]
need not define an \(L^p\)-bounded function of \(x\).  Consequently, the
pointwise GY formula cannot be integrated by a formal maximal-function
argument.  We now use the following theorem to derive the $L^{p}$-GY formula.

\begin{theorem}
Let \(\gamma>0\), \(0<p<\infty\), and \(f,g\in L^p \).  For
\(\lambda>0\), set
\[
 E_\lambda(f,g)
 :=\bigl\{(x,y)\in\mathbb R^n\times\mathbb R^n:
 |f(x)-g(y)|>\lambda |x-y|^{\gamma/p}\bigr\}.
\]
Then
\[
 \sup_{\lambda>0}\lambda^p
 \iint_{E_\lambda(f,g)}|x-y|^{\gamma-n}\,dx\,dy
 \le \frac{2^p|\mathbb S^{n-1}|}{\gamma}
 \bigl(\|f\|_{L^p}^p+\|g\|_{L^p}^p\bigr).
\]
In particular, if \(f=g\), we obtain
\[
 \sup_{\lambda>0}\lambda^p
 \iint_{E_\lambda(f,f)}|x-y|^{\gamma-n}\,dx\,dy
 \le \frac{2^{p+1}|\mathbb S^{n-1}|}{\gamma}
 \|f\|_{L^p}^p.
\]
\end{theorem}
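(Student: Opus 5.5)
We split the difference $f(x)-g(y)$ so that each piece depends on only one variable, and then integrate the one-dimensional identity \eqref{BSY:eq1} over the other variable. If $|f(x)-g(y)|>\lambda|x-y|^{\gamma/p}$, then either $|f(x)|>\frac{\lambda}{2}|x-y|^{\gamma/p}$ or $|g(y)|>\frac{\lambda}{2}|x-y|^{\gamma/p}$. Hence
\[
E_\lambda(f,g)\subset F_\lambda\cup G_\lambda,
\]
where
\[
F_\lambda=\{(x,y):|f(x)|>\tfrac{\lambda}{2}|x-y|^{\gamma/p}\},
\qquad
G_\lambda=\{(x,y):|g(y)|>\tfrac{\lambda}{2}|x-y|^{\gamma/p}\}.
\]
The weighted measure of $E_\lambda(f,g)$ is therefore at most the sum of the weighted measures of $F_\lambda$ and $G_\lambda$. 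This splitting is where the constant $2^p$ comes from. It also avoids any maximal function, which matters because the paper notes that the radial maximal quantity is not $L^p$-bounded.

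To estimate $F_\lambda$, fix $x$ and use Tonelli together with the polar formula \eqref{eq:polar-weighted}, writing $y=x+t\theta$. This gives
\[
\int_{\{y:(x,y)\in F_\lambda\}}|x-y|^{\gamma-n}\,dy
=|\mathbb S^{n-1}|\,w_\gamma\bigl(\{t>0:|f(x)|t^{-\gamma/p}>\lambda/2\}\bigr).
\]
By \eqref{BSY:eq1} with $q=p$, $L=|f(x)|$ and $\lambda/2$ in place of $\lambda$, this equals
\[
|\mathbb S^{n-1}|\,\frac{2^p|f(x)|^p}{\gamma\lambda^p}.
\]
When $f(x)=0$ the slice is empty, so the bound holds trivially. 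Integrating in $x$ yields
\[
\lambda^p\iint_{F_\lambda}|x-y|^{\gamma-n}\,dx\,dy=\frac{2^p|\mathbb S^{n-1}|}{\gamma}\|f\|_{L^p}^p.
\]

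The set $G_\lambda$ is handled symmetrically: fix $y$ and integrate over $x$ first, which gives the same bound with $\|g\|_{L^p}^p$. Adding the two estimates and taking the supremum over $\lambda>0$ proves the inequality. Setting $f=g$ then gives the constant $2^{p+1}|\mathbb S^{n-1}|/\gamma$.

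The argument needs $\gamma>0$ so that the slice sets are bounded intervals $t<(2|f(x)|/\lambda)^{p/\gamma}$ and \eqref{BSY:eq1} applies. The only step needing care is measurability of $F_\lambda$ and $G_\lambda$, which is needed to apply Tonelli. It holds because both sets are defined by inequalities between measurable functions on $\mathbb R^{2n}$.
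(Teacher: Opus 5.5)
Your proposal is correct and follows essentially the same route as the paper. Both arguments use the inclusion $E_\lambda(f,g)\subset F_\lambda\cup G_\lambda$, which comes from $|f(x)-g(y)|\le|f(x)|+|g(y)|$, then compute each slice exactly as a ball of radius $(2|f(x)|/\lambda)^{p/\gamma}$ (you do this via \eqref{eq:polar-weighted} and \eqref{BSY:eq1}, the paper by a direct polar integration), and finally integrate in the remaining variable; your added remarks on the case $f(x)=0$ and on measurability for Tonelli are harmless refinements.
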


\begin{proof}
Fix \(\lambda>0\).  From
\[
 |f(x)-g(y)|\le |f(x)|+|g(y)|
\]
we obtain the pointwise inclusion
\[
 E_\lambda(f,g)\subset A_\lambda\cup B_\lambda,
\]
where
\[
 A_\lambda
 :=\left\{(x,y):|f(x)|>\frac{\lambda}{2}|x-y|^{\gamma/p}\right\},
 \qquad
 B_\lambda
 :=\left\{(x,y):|g(y)|>\frac{\lambda}{2}|x-y|^{\gamma/p}\right\}.
\]
Indeed, if a point belongs to neither \(A_\lambda\) nor \(B_\lambda\), then
\[
 |f(x)-g(y)|
 \le |f(x)|+|g(y)|
 \le \lambda |x-y|^{\gamma/p},
\]
so it does not belong to \(E_\lambda(f,g)\).  Hence,
\begin{align*}
 \iint_{E_\lambda(f,g)}|x-y|^{\gamma-n}\,dx\,dy
 &\le I_f(\lambda)+I_g(\lambda),
\end{align*}
with
\[
 I_f(\lambda)
 :=\int_{\mathbb R^n}\int_{\mathbb R^n}
 \mathbf 1_{A_\lambda}(x,y)|x-y|^{\gamma-n}\,dy\,dx
\]
and the analogous definition of \(I_g(\lambda)\).

For a fixed $x\in \R^n$, the set \(A_\lambda\) at \(x\) is exactly the ball
\(B(x,\left(\frac{2|f(x)|}{\lambda}\right)^{p/\gamma}\). Therefore,
\begin{align*}
 \int_{\mathbb R^n}\mathbf 1_{A_\lambda}(x,y)
 |x-y|^{\gamma-n}\,dy
 &=|\mathbb S^{n-1}|\int_0^{\left(\frac{2|f(x)|}{\lambda}\right)^{p/\gamma}}r^{\gamma-1}\,dr\\
 &=\frac{|\mathbb S^{n-1}|}{\gamma}
 \left(\frac{2|f(x)|}{\lambda}\right)^{p}.
\end{align*}
It follows that
\[
 I_f(\lambda)
 =\frac{2^p|\mathbb S^{n-1}|}{\gamma\lambda^p}
 \|f\|_{L^p}^p.
\]
In the same method, we get
\[
 I_g(\lambda)
 =\frac{2^p|\mathbb S^{n-1}|}{\gamma\lambda^p}
 \|g\|_{L^p}^p.
\]
Consequently,
\[
 \lambda^p
 \iint_{E_\lambda(f,g)}|x-y|^{\gamma-n}\,dx\,dy
 \le \frac{2^p|\mathbb S^{n-1}|}{\gamma}
 \bigl(\|f\|_{L^p}^p+\|g\|_{L^p}^p\bigr).
\]
The right-hand side is independent of \(\lambda\); taking the supremum over
\(\lambda>0\) completes the proof.  Notice that the proof uses only the
scalar inequality \(|a-b|\le |a|+|b|\), not the triangle inequality in
\(L^p\), and is therefore valid for every \(0<p<\infty\).
\end{proof}

\subsection{Pointwise Frank-type mean oscillation formula}

For \(1\le r<\infty\), write
\[
 m_f^{(q)}(x,t)
 :=\left(\frac1{|B(x,t)|}\int_{B(x,t)}
 |f(y)-f_{B(x,t)}|^q\,dy\right)^{1/q}.
\]

We now establish the pointwise Frank-type mean oscillation formula.

\begin{theorem}\label{thm:Frank-osc}
Let \(1\le q<\infty\) and \(f\in C_c^1(\mathbb R^n)\). Then, for every
\(x\in\mathbb R^n\),
\begin{equation}\label{eq:main-oscillation}
\lim_{t\to0^+}\frac{m_f^{(q)}(x,t)}{t}
=
\tilde{K}_{n,q}|\nabla f(x)|,
\end{equation}
where
\[
\tilde{K}_{n,q}
:=
\left(
\frac{1}{|B(0,1)|}
\int_{B(0,1)}|e\cdot z|^q\,dz
\right)^{1/q}=\Big(\frac{n}{n+q}K_{n,q}\Big)^{1/q}.
\]
\end{theorem}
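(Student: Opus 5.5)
The plan is to rescale the ball to the unit ball and Taylor-expand $f$ to first order; the limit then follows by dominated convergence. Fix $x$ and substitute $y=x+tz$ with $z\in B(0,1)$. Since Lebesgue measure is dilation invariant in the averaged sense,
\[
\frac{m_f^{(q)}(x,t)^q}{t^q}
=\frac1{|B(0,1)|}\int_{B(0,1)}\Bigl|\frac{f(x+tz)-f(x)}{t}-c_t\Bigr|^q\,dz,
\qquad
c_t:=\frac1{|B(0,1)|}\int_{B(0,1)}\frac{f(x+tz)-f(x)}{t}\,dz .
\]
Here we used that subtracting the constant $f(x)$ leaves the oscillation unchanged. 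Put $g_t(z):=\frac{f(x+tz)-f(x)}{t}$.

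Because $f\in C^1_c$, we have $g_t(z)\to\nabla f(x)\cdot z$ for every $z\in B(0,1)$. By the mean value theorem, $|g_t(z)|\le\|\nabla f\|_{L^\infty}$, so $g_t$ is uniformly bounded. Dominated convergence then gives
\[
c_t\to\frac1{|B(0,1)|}\int_{B(0,1)}\nabla f(x)\cdot z\,dz=0,
\]
since the integrand is odd. Hence $|g_t-c_t|^q\to|\nabla f(x)\cdot z|^q$ pointwise, with the uniform bound $(2\|\nabla f\|_{L^\infty})^q$. A second application of dominated convergence yields
\[
\lim_{t\to0^+}\frac{m_f^{(q)}(x,t)^q}{t^q}=\frac1{|B(0,1)|}\int_{B(0,1)}|\nabla f(x)\cdot z|^q\,dz .
\]
Taking $q$-th roots then gives the limit in \eqref{eq:main-oscillation}, with $\tilde K_{n,q}$ expressed through the ball integral.

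It remains to evaluate the constant. If $\nabla f(x)=0$ both sides vanish. Otherwise write $\nabla f(x)=|\nabla f(x)|e$. Integrating in polar coordinates $z=r\theta$ and using \eqref{eq:rotational-K}, we get
\[
\int_{B(0,1)}|e\cdot z|^q\,dz=\int_0^1 r^{q+n-1}\,dr\,K_{n,q}=\frac{K_{n,q}}{n+q}.
\]
Since $|B(0,1)|=|\mathbb S^{n-1}|/n$, the rotational identity alone gives $\tilde K_{n,q}^q=\frac{n}{n+q}\frac{K_{n,q}}{|\mathbb S^{n-1}|}$.

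This differs from the stated $\frac{n}{n+q}K_{n,q}$ by a factor $|\mathbb S^{n-1}|^{-1}$. Matching the stated formula therefore requires reading $d\theta$ in the definition of $K_{n,q}$ as normalized surface measure, i.e.\ $K_{n,q}$ should carry a factor $1/|\mathbb S^{n-1}|$. The constant bookkeeping is the only delicate point in the argument: the limit computation itself goes through by dominated convergence and needs no appeal to Theorem~\ref{thm:BBM}.
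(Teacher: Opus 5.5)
Your proof is correct. It shares the paper's skeleton: linearize at $x$, use oddness of $z\mapsto\nabla f(x)\cdot z$ on the ball to kill the mean, and rescale to $B(0,1)$. Where it differs is in how the argument is closed.

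The paper writes $f(y)=f(x)+\nabla f(x)\cdot(y-x)+\eta_x(y)$ with $|\eta_x(y)|\le\omega_x(|y-x|)\,|y-x|$. Then $f(y)-f_{B(x,t)}$ is the linear part plus a remainder bounded by $2\omega_x(t)t$ on $B(x,t)$. The reverse triangle inequality in $L^q(B(x,t))$ then gives the explicit bound $\bigl|m_f^{(q)}(x,t)/t-\tilde K_{n,q}|\nabla f(x)|\bigr|\le2\omega_x(t)$.

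You instead apply dominated convergence twice, with the mean-value majorant $\|\nabla f\|_{L^\infty}$. Each route buys something different:
\begin{itemize}
\item The paper's version yields a rate. Since $\omega_x(t)\le\sup_{|y-x|\le t}|\nabla f(y)-\nabla f(x)|$, it also gives convergence uniform in $x$ for $f\in C_c^1$.
\item Your version avoids Minkowski's inequality, so it works verbatim for every $0<q<\infty$. The paper's reverse triangle inequality needs $q\ge1$.
\end{itemize}

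Your remark on the constant is also right, and the paper's proof does not check this point. The proof only identifies the limit with the ball-integral definition of $\tilde K_{n,q}$ and never verifies the closed form. The paper's $K_{n,q}$ uses unnormalized surface measure. This is confirmed by $K_{n,q}=2\pi^{(n-1)/2}\Gamma(\frac{q+1}{2})/\Gamma(\frac{n+q}{2})$ and by the factor $|\mathbb S^{n-1}|$ in its MS formula. With that normalization the correct identity is $\tilde K_{n,q}^q=\frac{n}{n+q}K_{n,q}/|\mathbb S^{n-1}|$. For instance, for $n=1$ one gets $\tilde K_{1,q}^q=\frac{1}{q+1}$, whereas $\frac{n}{n+q}K_{1,q}=\frac{2}{q+1}$.

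Your corrected value also agrees with the paper's later Frank-type remark. For $q=1$ it gives $\tilde K_{n,1}=\pi^{-1/2}\Gamma(\frac{n+2}{2})/\Gamma(\frac{n+3}{2})$. This is exactly the quantity raised to the power $p$ in the constant attributed there to Frank. The stated closed form would instead give $|\mathbb S^{n-1}|$ times this.
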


\begin{proof}
Fix \(x\in\mathbb R^n\), we write
\[
f(y)
=
f(x)+\nabla f(x)\cdot(y-x)+\eta_x(y),
\]
where
\[
|\eta_x(y)|
\le
\omega_x(|y-x|)|y-x|,
\qquad
\omega_x(t)
:=
\sup_{0<|y-x|\le t}
\frac{|f(y)-f(x)-\nabla f(x)\cdot(y-x)|}{|y-x|},
\]
and \(\omega_x(t)\to0\) as \(t\to0^+\).

Since the ball \(B(x,t)\) is symmetric about \(x\), then
\[
\frac{1}{|B(x,t)|}
\int_{B(x,t)}
\nabla f(x)\cdot(y-x)\,dy
=0.
\]
Hence
\[
f_{B(x,t)}=f(x)+\rho_x(t),
\qquad
\rho_x(t)
:=
\frac{1}{|B(x,t)|}
\int_{B(x,t)}\eta_x(y)\,dy.
\]
For \(y\in B(x,t)\), one has
\[
|\eta_x(y)|\le \omega_x(t)t,
\qquad
|\rho_x(t)|\le \omega_x(t)t.
\]
Therefore,
\[
f(y)-f_{B(x,t)}
=
A_t(y)+R_t(y),
\]
where
\[
A_t(y):=\nabla f(x)\cdot(y-x),
\qquad
R_t(y):=\eta_x(y)-\rho_x(t),
\]
and
\[
|R_t(y)|\le 2\omega_x(t)t
\qquad\text{for }y\in B(x,t).
\]

The reverse triangle inequality in \(L^q(B(x,t))\) now gives
\begin{align}
&\left|
\frac{m_f^{(q)}(x,t)}{t}
-
\frac{1}{t}
\left(
\frac{1}{|B(x,t)|}
\int_{B(x,t)}|A_t(y)|^q\,dy
\right)^{1/q}
\right|
\nonumber\\
&\hspace{4em}\le
\frac{1}{t}
\left(
\frac{1}{|B(x,t)|}
\int_{B(x,t)}|R_t(y)|^q\,dy
\right)^{1/q}
\le
2\omega_x(t).
\label{eq:osc-error}
\end{align}

With the change of variables
\(y=x+tz\), using \(dy=t^n\,dz\) and
\(|B(x,t)|=t^n|B(0,1)|\), we obtain
\[
\frac{1}{t}
\left(
\frac{1}{|B(x,t)|}
\int_{B(x,t)}
|\nabla f(x)\cdot(y-x)|^q\,dy
\right)^{1/q}
=
\left(
\frac{1}{|B(0,1)|}
\int_{B(0,1)}
|\nabla f(x)\cdot z|^q\,dz
\right)^{1/q}.
\]
By rotational invariance of the unit ball and Lebesgue measure,
\[
\int_{B(0,1)}
|\nabla f(x)\cdot z|^q\,dz
=
|\nabla f(x)|^q
\int_{B(0,1)}\Big|\frac{\nabla f(x)}{|\nabla f(x)|}\cdot z\Big|^q\,dz.
\]
Consequently,
\begin{equation}\label{eq:linear-oscillation}
\frac{1}{t}
\left(
\frac{1}{|B(x,t)|}
\int_{B(x,t)}|A_t(y)|^q\,dy
\right)^{1/q}
=
\tilde{K}_{n,q}|\nabla f(x)|.
\end{equation}

Combining \eqref{eq:osc-error} and \eqref{eq:linear-oscillation}, we find
\[
\left|
\frac{m_f^{(q)}(x,t)}{t}
-
\tilde{K}_{n,q}|\nabla f(x)|
\right|
\le
2\omega_x(t).
\]
Letting \(t\to0^+\) proves \eqref{eq:main-oscillation}.
\end{proof}

For \(\lambda>0\), define the radial level set
\[
 \widetilde E_{x,r}(f,\lambda,q,\gamma)
 :=\left\{y\in\mathbb R^n:
 m_f^{(r)}(x,|x-y|)>
 \lambda |x-y|^{1+\gamma/q}\right\}.
\]

\begin{theorem}\label{Frank-pointwise}
Let \(1\le r<\infty\), \(\gamma\ne0\), \(0<q<\infty\), and
\(f\in C_c^1(\mathbb R^n)\).  Then, for every \(x\in\mathbb R^n\),
\[
\begin{cases}
\displaystyle
\lim_{\lambda\to\infty}\lambda
\left(\int_{\widetilde E_{x,r}(f,\lambda,q,\gamma)}
|x-y|^{\gamma-n}\,dy\right)^{1/q}
=\dfrac{|\mathbb S^{n-1}|^{1/q}\tilde{K}_{n,r}}{\gamma^{1/q}}
 |\nabla f(x)|,
&\gamma>0,\\[14pt]
\displaystyle
\lim_{\lambda\to0^+}\lambda
\left(\int_{\widetilde E_{x,r}(f,\lambda,q,\gamma)}
|x-y|^{\gamma-n}\,dy\right)^{1/q}
=\dfrac{|\mathbb S^{n-1}|^{1/q}\tilde{K}_{n,r}}{(-\gamma)^{1/q}}
 |\nabla f(x)|,
&\gamma<0.
\end{cases}
\]
Moreover,
\[
\begin{split}
&\sup_{\lambda>0}\lambda
\left(\int_{\widetilde E_{x,r}(f,\lambda,q,\gamma)}
|x-y|^{\gamma-n}\,dy\right)^{1/q}\le C_{n,r}|\mathbb S^{n-1}|^{1/q}|\gamma|^{-1/q}
 \bigl[M (|\nabla f|^r)(x)\bigr]^{1/r}.
\end{split}
\]
\end{theorem}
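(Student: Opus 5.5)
The plan is to reduce to the one-dimensional Theorem~\ref{thm:BSVY}, as in the pointwise BSVY formula. The set $\widetilde E_{x,r}$ is radial about $x$: membership of $y$ depends only on $t=|x-y|$. So we set
\[
T(t):=\frac{m_f^{(r)}(x,t)}{t},\qquad t>0,
\]
and the polar identity \eqref{eq:polar-weighted} gives
\[
\int_{\widetilde E_{x,r}(f,\lambda,q,\gamma)}|x-y|^{\gamma-n}\,dy
=|\mathbb S^{n-1}|\,w_\gamma\Bigl(\Bigl\{t>0:\frac{T(t)}{t^{\gamma/q}}>\lambda\Bigr\}\Bigr).
\]
No dominated convergence in $\theta$ is needed, since the integrand does not depend on $\theta$.

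Next we check the hypotheses of Theorem~\ref{thm:BSVY}(I).
\begin{itemize}
\item $T$ is non-negative and measurable; in fact it is continuous in $t$, because $f$ is continuous.
\item By Theorem~\ref{thm:Frank-osc}, $\lim_{t\to0^+}T(t)=\tilde K_{n,r}|\nabla f(x)|$ exists and is finite.
\item $\sup_t T(t)<\infty$. For this we prove a bound of the form $T(t)\le C_{n,r}[M(|\nabla f|^r)(x)]^{1/r}$, which is finite for $f\in C^1_c$.
\end{itemize}
Theorem~\ref{thm:BSVY}(I) then gives, for $\gamma>0$ as $\lambda\to\infty$ and for $\gamma<0$ as $\lambda\to0^+$,
\[
\lambda^q w_\gamma(\cdots)\to \frac{\tilde K_{n,r}^q|\nabla f(x)|^q}{|\gamma|}.
\]
Multiplying by $|\mathbb S^{n-1}|$ and taking $q$-th roots yields both limits. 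The universal estimate \eqref{BSY:universal} gives
\[
\sup_\lambda \lambda\Bigl(\int_{\widetilde E_{x,r}}|x-y|^{\gamma-n}\,dy\Bigr)^{1/q}\le |\mathbb S^{n-1}|^{1/q}|\gamma|^{-1/q}\sup_t T(t),
\]
so the maximal bound also follows from the bound on $\sup_t T(t)$.

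The main step is a uniform Poincaré-type bound:
\[
m_f^{(r)}(x,t)\le C_{n,r}\,t\Bigl(\frac1{|B(x,t)|}\int_{B(x,t)}|\nabla f|^r\Bigr)^{1/r}\le C_{n,r}\,t\,[M(|\nabla f|^r)(x)]^{1/r}.
\]
I would prove it with the standard $L^r$ Poincaré inequality on balls for $1\le r<\infty$, rescaled from the unit ball. This holds for $C^1$ functions with a constant depending only on $n,r$, which fixes $C_{n,r}$.

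To keep the argument self-contained, one can instead first replace $f_{B}$ by $f(y')$ and average over $y'\in B$, which costs a factor $2$ by Minkowski. Then write $f(y)-f(y')=\int_0^1\nabla f(y'+s(y-y'))\cdot(y-y')\,ds$ and use the standard change of variables $z=y'+s(y-y')$. This is the classical potential estimate, giving the bound with constant $C_{n,r}$.

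The Poincaré constant is the only place requiring care. Everything else is a direct application of Theorem~\ref{thm:Frank-osc}, Theorem~\ref{thm:BSVY} and \eqref{eq:polar-weighted}.
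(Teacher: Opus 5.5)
Your proposal is correct and follows the paper's proof essentially step for step. Both arguments use the radial reduction via \eqref{eq:polar-weighted} to $|\mathbb S^{n-1}|\,w_\gamma$, the limit at the origin from Theorem~\ref{thm:Frank-osc}, the $L^r$-Poincar\'e bound $T(t)\le C_{n,r}[M(|\nabla f|^r)(x)]^{1/r}$ for boundedness, and then Theorem~\ref{thm:BSVY}(I) with its universal estimate. Your optional self-contained Poincar\'e argument (averaging over $y'$ via Jensen, which in fact costs no factor $2$, plus the segment change of variables) is a valid supplement; the paper simply cites that inequality as standard.
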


\begin{proof}
Set
\[
 U_{x,r}(t):=\frac{m_f^{(r)}(x,t)}{t},
 \qquad t>0.
\]
Since the defining inequality for \(\widetilde E_{x,r}\) depends only on
\(|x-y|\), polar coordinates give
\[
 \int_{\widetilde E_{x,r}(f,\lambda,q,\gamma)}
 |x-y|^{\gamma-n}\,dy
 =|\mathbb S^{n-1}|\,
 w_\gamma\left(\left\{t>0:
 \frac{U_{x,r}(t)}{t^{\gamma/q}}>\lambda\right\}\right).
\]
By Theorem~\ref{thm:Frank-osc},
\[
 \lim_{t\to0^+}U_{x,r}(t)=\tilde{K}_{n,r}|\nabla f(x)|.
\]
On the other hand, the \(L^r\)-Poincar\'e inequality on a ball gives
\[
 \left(\frac1{|B(x,t)|}\int_{B(x,t)}
 |f(y)-f_{B(x,t)}|^r\,dy\right)^{1/r}
 \le C_{n,r}\cdot t
 \left(\frac1{|B(x,t)|}\int_{B(x,t)}|\nabla f(y)|^r\,dy\right)^{1/r}.
\]
Since \(x\) is the center of \(B(x,t)\), it follows that
\[
 0\le U_{x,r}(t)
 \le C_{n,r}\bigl[M(|\nabla f|^r)(x)\bigr]^{1/r}
 \qquad(t>0).
\]
Thus \(U_{x,r}\) is bounded and satisfies all hypotheses of
Theorem~\ref{thm:BSVY}(I).  If \(\gamma>0\), that theorem yields
\[
 \lim_{\lambda\to\infty}\lambda^q
 w_\gamma\left(\left\{t>0:
 \frac{U_{x,r}(t)}{t^{\gamma/q}}>\lambda\right\}\right)
 =\frac{\tilde{K}_{n,r}^q|\nabla f(x)|^q}{\gamma}.
\]
If \(\gamma<0\), it yields instead
\[
 \lim_{\lambda\to0^+}\lambda^q
 w_\gamma\left(\left\{t>0:
 \frac{U_{x,r}(t)}{t^{\gamma/q}}>\lambda\right\}\right)
 =\frac{\tilde{K}_{n,r}^q|\nabla f(x)|^q}{-\gamma}.
\]
Multiplying by \(|\mathbb S^{n-1}|\) and taking the positive \(q\)-th root
proves both limits.  Finally, the universal estimate in
Theorem~\ref{thm:BSVY} shows
\begin{align*}
 &\sup_{\lambda>0}\lambda^q
 \int_{\widetilde E_{x,r}(f,\lambda,q,\gamma)}
 |x-y|^{\gamma-n}\,dy\\
 &\qquad\le
 \frac{|\mathbb S^{n-1}|}{|\gamma|}
 \sup_{t>0}U_{x,r}(t)^q
\le
 \frac{C_{n,r}^q|\mathbb S^{n-1}|}{|\gamma|}
 \bigl[M(|\nabla f|^r)(x)\bigr]^{q/r}.
\end{align*}
Taking the \(q\)-th root completes the proof.
\end{proof}

\begin{remark}
Using
\[
|\mathbb S^{n-1}|
=
\frac{2\pi^{n/2}}{\Gamma\!\left(\frac n2\right)}
\]
and
\[
\widetilde K_{n,r}
=
\left(
\frac{
2\pi^{\frac{n-1}{2}}
\Gamma\!\left(\frac{r+1}{2}\right)
}{
\Gamma\!\left(\frac{n+r}{2}\right)
}
\right)^{1/r},
\]
the constant in the preceding formula can be expressed as
\[
\frac{|\mathbb S^{n-1}|\widetilde K_{n,r}^q{}}
{|\gamma|}
=
|\gamma|^{-1}
\left(
\frac{2\pi^{n/2}}
{\Gamma\!\left(\frac n2\right)}
\right)
\left(
\frac{
2\pi^{\frac{n-1}{2}}
\Gamma\!\left(\frac{r+1}{2}\right)
}{
\Gamma\!\left(\frac{n+r}{2}\right)
}
\right)^{q/r}.
\]
In the particular case \(q=p\), \(\gamma=-p\), and \(r=1\), the constant is
\[
\frac{1}{p}
\left(
\frac{1}{\sqrt{\pi}}\,
\frac{
\Gamma\!\left(\frac{n+2}{2}\right)
}{
\Gamma\!\left(\frac{n+3}{2}\right)
}
\right)^p,
\]
in agreement with \cite[Theorem~1]{Frank22}.
\end{remark}

\begin{remark}
A closely related oscillation is obtained by averaging over two points of the
same ball.  For \(1\le r<\infty\), set
\[
 \widetilde m_f^{(r)}(x,t)
 :=\left(\frac1{|B(x,t)|^2}
 \iint_{B(x,t)^2}|f(y)-f(z)|^r\,dy\,dz\right)^{1/r}
\]
and
\[
  k_{n,r}^r
 :=\frac1{|B_1|^2}\iint_{B_1^2}|e\cdot (z-w)|^r\,dz\,dw, \qquad \qquad e\in \mathbb{S}^{n-1}.
\]
Scaling \(y=x+tz\), \(z=x+tw\), followed by the first-order Taylor
expansion, gives
\begin{align*}
 \left(\frac{\widetilde m_f^{(r)}(x,t)}{t}\right)^r
 &=\frac1{|B_1|^2}\iint_{B_1^2}
 \left|\nabla f(x)\cdot(z-w)+e_t(z,w)\right|^r\,dz\,dw,
\end{align*}
where \(\|e_t\|_{L^\infty(B_1^2)}\to0\).  Hence
\[
 \lim_{t\to0^+}\frac{\widetilde m_f^{(r)}(x,t)}{t}
 = k_{n,r}|\nabla f(x)|.
\]
The two oscillations are quantitatively comparable.  Jensen's inequality and
the triangle inequality imply
\[
 m_f^{(r)}(x,t)
 \le \widetilde m_f^{(r)}(x,t)
 \le 2m_f^{(r)}(x,t).
\]

If one defines the level set by replacing \(m_f^{(r)}\) with
\(\widetilde m_f^{(r)}\), the proof of Theorem~\ref{Frank-pointwise} also holds.  The limiting constant becomes \( k_{n,r}\), both signs
of \(\gamma\) are allowed, and the universal bound follows from
\(\widetilde m_f^{(r)}\le2m_f^{(r)}\) and the same Poincar\'e estimate.
\end{remark}

\subsection{Higher-order BBM formula}

We now pass from first differences to finite differences of arbitrary order.
At this level the limiting object is no longer a vector but the symmetric
\(k\)-tensor \(D^kf(x)\).  We therefore recall the tensor notation explicitly
before stating the endpoint formula.

Let \(\operatorname{Sym}^k(\mathbb R^n)\) denote the space of symmetric
\(k\)-linear forms on \((\mathbb R^n)^k\).  If
\(A\in\operatorname{Sym}^k(\mathbb R^n)\), write
\[
 A[v_1,\ldots,v_k]
 :=\sum_{i_1,\ldots,i_k=1}^n
 A_{i_1\cdots i_k}(v_1)_{i_1}\cdots(v_k)_{i_k},
 \qquad
 A[v^k]:=A[v,\ldots,v].
\]
For a \(C^k\)-function,
\[
 D^kf(x)[v_1,\ldots,v_k]
 =\sum_{i_1,\ldots,i_k=1}^n
 \partial_{i_1}\cdots\partial_{i_k}f(x)
 (v_1)_{i_1}\cdots(v_k)_{i_k},
\]
so that
\[
 D_\theta^kf(x)=D^kf(x)[\theta^k].
\]
For \(0<q<\infty\), define the rotationally invariant tensor gauge
\[
 \mathcal A_{n,k,q}(A)
 :=\left(\int_{\mathbb S^{n-1}}|A[\theta^k]|^q\,d\theta\right)^{1/q}.
\]
When \(q\ge1\), this is a norm on \(\operatorname{Sym}^k(\mathbb R^n)\); when
\(0<q<1\), it is a  quasi-norm. For \(k=1\), the definition reduces to
\[
 \mathcal A_{n,1,q}(v)=K_{n,q}^{1/q}|v|.
\]
For \(k\ge2\), no single scalar multiple of the norm represents
\(\mathcal A_{n,k,q}\) in general, because different trace components of a
symmetric tensor contribute differently to the spherical polynomial.

For \(h\in\mathbb R^n\), define
\[
 \Delta_h^kf(x)
 :=\sum_{j=0}^k(-1)^{k-j}\binom{k}{j}f(x+jh)
\]
and
\[
 \mathcal D_{s,q}^{(k)}f(x)
 :=\left(\int_{\mathbb R^n}\frac{|\Delta_h^kf(x)|^q}
 {|h|^{n+(k-1+s)q}}\,dh\right)^{1/q},
 \qquad 0<s<1.
\]

\begin{lemma}\label{H-BBM-lim}
Let \(f\in C^k\) in a neighborhood of the segment
\(\{x+u\theta:0\le u\le kt\}\).  Then
\[
 \frac{\Delta_{t\theta}^kf(x)}{t^k}
 =\int_{[0,1]^k}D_\theta^kf
 \bigl(x+t(s_1+\cdots+s_k)\theta\bigr)
 \,ds_1\cdots ds_k.
\]
Consequently, for every \(x\) and \(\theta\in\mathbb S^{n-1}\),
\[
 \lim_{t\to0^+}\frac{\Delta_{t\theta}^kf(x)}{t^k}
 =D_\theta^kf(x)=D^kf(x)[\theta^k].
\]
If \(D^kf\) is continuous near \(x\), the convergence is uniform in
\(\theta\in\mathbb S^{n-1}\).
\end{lemma}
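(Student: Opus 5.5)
The identity will follow from the one-dimensional fact that the $k$-th forward difference of a function with step $t$ is the $k$-fold iterated integral of its $k$-th derivative. Fix $x$ and $\theta$, and set $g(u):=f(x+u\theta)$ for $0\le u\le kt$. By hypothesis $g\in C^k$ on $[0,kt]$, and the chain rule gives $g^{(k)}(u)=D^kf(x+u\theta)[\theta^k]=D_\theta^kf(x+u\theta)$. Since $\Delta_{t\theta}^kf(x)=\Delta_t^kg(0)$, with the one-dimensional forward difference on the right, it suffices to prove
\[
 \Delta_t^kg(0)=t^k\int_{[0,1]^k}g^{(k)}\bigl(t(s_1+\cdots+s_k)\bigr)\,ds_1\cdots ds_k .
\]

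I will prove this by induction on $k$, using the operator factorization $\Delta_t^k=\Delta_t\circ\Delta_t^{k-1}$. This factorization follows from Pascal's rule applied to the binomial sum in the definition. The case $k=1$ is the fundamental theorem of calculus:
\[
 g(t)-g(0)=t\int_0^1g'(ts_1)\,ds_1 .
\]
For the inductive step, apply the $(k-1)$ case to $g'$. Since $\Delta_t$ commutes with differentiation, we have $(\Delta_t^{k-1}g)'=\Delta_t^{k-1}(g')$. Hence
\[
 \Delta_t^kg(0)=\Delta_t^{k-1}g(t)-\Delta_t^{k-1}g(0)=t\int_0^1(\Delta_t^{k-1}g')(ts_k)\,ds_k .
\]
The induction hypothesis, applied to $g'$ at base point $ts_k$, then gives
\[
 (\Delta_t^{k-1}g')(ts_k)=t^{k-1}\int_{[0,1]^{k-1}}g^{(k)}\bigl(t(s_1+\cdots+s_{k-1}+s_k)\bigr)\,ds_1\cdots ds_{k-1} .
\]
Fubini's theorem is harmless here, because the integrand is continuous on a compact cube. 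All evaluation points lie in $[0,kt]$, so the hypothesis on the segment is exactly what is needed. Dividing by $t^k$ yields the stated formula.

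The limit then follows directly from the formula. As $t\to0^+$, the points $x+t(s_1+\cdots+s_k)\theta$ converge to $x$ uniformly in $(s_1,\dots,s_k)\in[0,1]^k$. By continuity of $D^kf$ at $x$ (implicit in $f\in C^k$ near every small segment), the integrand converges uniformly to $D^kf(x)[\theta^k]$. The cube has measure $1$, so the limit is $D^kf(x)[\theta^k]=D_\theta^kf(x)$.

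For uniformity in $\theta$, use the estimate
\[
 \bigl|D^kf(y)[\theta^k]-D^kf(x)[\theta^k]\bigr|\le\|D^kf(y)-D^kf(x)\|\,|\theta|^k=\|D^kf(y)-D^kf(x)\|,
\]
where the norm is any operator norm on $\operatorname{Sym}^k(\mathbb R^n)$. The points $y$ that occur satisfy $|y-x|\le kt$, independently of $\theta$. So, given $\varepsilon>0$, continuity of $D^kf$ at $x$ gives a $\delta$ such that the integrand is within $\varepsilon$ of its limit for all $t<\delta/k$ and all $\theta\in\mathbb S^{n-1}$ simultaneously.

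None of these steps is a genuine obstacle. The only point requiring care is the bookkeeping in the induction: one must check that $\Delta_t$ commutes with $d/du$ and that the domains of the iterated differences stay inside $[0,kt]$.
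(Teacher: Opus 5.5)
Your proposal is correct and follows the same route as the paper's proof: induction through $\Delta_{t\theta}^k=\Delta_{t\theta}\circ\Delta_{t\theta}^{k-1}$, the first-order fundamental-theorem-of-calculus identity applied to $\Delta^{k-1}$ of the function, the inductive hypothesis applied to the directional derivative, and the operator-norm bound with $|y-x|\le kt$ for uniformity in $\theta$. Your reduction to the one-variable function $g(u)=f(x+u\theta)$ and your explicit check that $\Delta_t$ commutes with $d/du$ simply make precise a step the paper leaves implicit.
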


\begin{proof}
For \(k=1\), the fundamental theorem of calculus gives
\[
 \frac{f(x+t\theta)-f(x)}{t}
 =\int_0^1D_\theta f(x+ts\theta)\,ds.
\]
Assume the stated identity holds at order \(k-1\).  Since
\(\Delta_{t\theta}^k=\Delta_{t\theta}(\Delta_{t\theta}^{k-1})\), applying the
order-one identity to the function \(u\mapsto
\Delta_{t\theta}^{k-1}f(u)\) gives
\begin{align*}
 \frac{\Delta_{t\theta}^kf(x)}{t^k}
 &=\frac1{t^{k-1}}\int_0^1
 D_\theta\bigl(\Delta_{t\theta}^{k-1}f\bigr)
 (x+ts_k\theta)\,ds_k\\
 &=\int_{[0,1]^k}D_\theta^kf
 \bigl(x+t(s_1+\cdots+s_k)\theta\bigr)
 \,ds_1\cdots ds_k,
\end{align*}
where the induction hypothesis is applied to \(D_\theta f\).  This proves the
integral identity for all \(k\).

Subtracting \(D_\theta^kf(x)\) from both sides yields
\begin{align*}
 &\left|\frac{\Delta_{t\theta}^kf(x)}{t^k}
 -D_\theta^kf(x)\right| \le
 \sup_{|z-x|\le kt}
 |D^kf(z)-D^kf(x)|_{\mathrm{op}},
\end{align*}
where \(|\cdot|_{\mathrm{op}}\) is the multilinear operator norm.  The
right-hand side tends to zero by continuity of \(D^kf\), independently of
\(\theta\).  This proves both the pointwise and the uniform convergence.
\end{proof}

\begin{theorem}
Let \(k\in\mathbb N\), \(0<q<\infty\), and
\(f\in C_c^k(\mathbb R^n)\).  Then, for every \(x\in\mathbb R^n\),
\[
 \lim_{s\to1^-}\bigl(q(1-s)\bigr)^{1/q}
 \mathcal D_{s,q}^{(k)}f(x)
 =\mathcal A_{n,k,q}(D^kf(x)).
\]
\end{theorem}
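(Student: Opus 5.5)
The plan mirrors the first-order pointwise BBM proof: reduce everything to Theorem~\ref{thm:BBM}(I) applied to one radial profile. Fix \(x\) and define
\[
 T(t):=t^{-kq}\int_{\mathbb S^{n-1}}|\Delta_{t\theta}^kf(x)|^q\,d\theta,\qquad t>0 .
\]
Writing \(h=t\theta\) in polar coordinates, with \(dh=t^{n-1}\,dt\,d\theta\) and \(|h|^{n+(k-1+s)q}=t^{n+(k-1+s)q}\), we get
\[
 \mathcal D^{(k)}_{s,q}f(x)^q=\int_0^\infty T(t)\,t^{kq-(k-1+s)q-1}\,dt=\int_0^\infty T(t)\,t^{q(1-s)-1}\,dt .
\]
Hence, with \(\gamma=q(1-s)\), which tends to \(0^+\) as \(s\to1^-\),
\[
 q(1-s)\,\mathcal D^{(k)}_{s,q}f(x)^q=\gamma\int_0^\infty T(t)t^{\gamma-1}\,dt .
\]

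Next I verify the two hypotheses of Theorem~\ref{thm:BBM}(I).

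\textbf{Limit at the origin.} Since \(f\in C^k_c\), \(D^kf\) is uniformly continuous. By Lemma~\ref{H-BBM-lim}, \(t^{-k}\Delta^k_{t\theta}f(x)\to D^kf(x)[\theta^k]\) uniformly in \(\theta\). The integral identity of that lemma also gives the bound \(|t^{-k}\Delta^k_{t\theta}f(x)|\le \|D^kf\|_{L^\infty}\). Dominated convergence on the sphere (even just uniform convergence, since the sphere has finite measure) then yields
\[
 \lim_{t\to0^+}T(t)=\int_{\mathbb S^{n-1}}|D^kf(x)[\theta^k]|^q\,d\theta=\mathcal A_{n,k,q}(D^kf(x))^q ,
\]
which is finite.

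\textbf{Decay at infinity.} The crude bound \(|\Delta^k_{t\theta}f(x)|\le 2^k\|f\|_{L^\infty}\) gives \(|T(t)|\le |\mathbb S^{n-1}|\,2^{kq}\|f\|_{L^\infty}^q\,t^{-kq}\) for \(t\ge1\). This is the required bound with \(\eta=kq>0\).

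Theorem~\ref{thm:BBM}(I) now gives \(\lim_{s\to1^-}q(1-s)\mathcal D^{(k)}_{s,q}f(x)^q=\mathcal A_{n,k,q}(D^kf(x))^q\). Taking the positive \(q\)-th root, which is continuous on \([0,\infty)\), proves the statement.

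I see no serious obstacle. The only delicate step is interchanging the limit and the spherical integral, and that is exactly what the uniform convergence in Lemma~\ref{H-BBM-lim} provides. Measurability of \(T\) is immediate from continuity of \((t,\theta)\mapsto\Delta^k_{t\theta}f(x)\).
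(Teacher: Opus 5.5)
Your proposal is correct and follows the paper's proof essentially line by line: the same profile $T(t)=t^{-kq}\int_{\mathbb S^{n-1}}|\Delta^k_{t\theta}f(x)|^q\,d\theta$, the same polar-coordinate reduction with $\gamma=q(1-s)$, Lemma~\ref{H-BBM-lim} for the limit at the origin, and the crude $2^k\|f\|_{L^\infty}$ bound giving $t^{-kq}$ decay for $t\ge1$. Your global bound $|t^{-k}\Delta^k_{t\theta}f(x)|\le\|D^kf\|_{L^\infty}$ also gives the boundedness of $T$ on $(0,1]$ that the paper states explicitly. That boundedness is needed because the proof of Theorem~\ref{thm:BBM}(I) silently uses a bound on $(t_0,1)$, which its hypothesis does not cover, so nothing is missing from your argument.
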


\begin{proof}
We write
\[
 T(t):=t^{-kq}\int_{\mathbb S^{n-1}}
 |\Delta_{t\theta}^kf(x)|^q\,d\theta,
 \qquad t>0.
\]
By Lemma~\ref{H-BBM-lim}, the integrand converges uniformly in \(\theta\) to
\(|D^kf(x)[\theta^k]|^q\).  Hence
\[
 \lim_{t\to0^+}T(t)
 =\int_{\mathbb S^{n-1}}|D^kf(x)[\theta^k]|^q\,d\theta
 =\mathcal A_{n,k,q}(D^kf(x))^q.
\]
The same integral representation gives, for \(0<t\le1\),
\[
 \frac{|\Delta_{t\theta}^kf(x)|}{t^k}
 \le\sup_{|z-x|\le k}|D^kf(z)|_{\mathrm{op}},
\]
so \(T\) is locally bounded near the origin. On the other hand,
difference satisfies
\[
 |\Delta_{t\theta}^kf(x)|
 \le\sum_{j=0}^k\binom{k}{j}|f(x+jt\theta)|
 \le2^k\|f\|_{L^{\infty}},
\]
and then
\[
 T(t)\le |\mathbb S^{n-1}|(2^k\|f\|_{L^{\infty}})^q t^{-kq},
 \qquad t\ge1,
\]
which is the condition required in Theorem~\ref{thm:BBM}(I).

Using polar coordinates \(h=t\theta\), we compute
\begin{align*}
 \mathcal D_{s,q}^{(k)}f(x)^q
 &=\int_{\mathbb S^{n-1}}\int_0^\infty
 |\Delta_{t\theta}^kf(x)|^q
 t^{-1-(k-1+s)q}\,dt\,d\theta=\int_0^\infty T(t)t^{q(1-s)-1}\,dt.
\end{align*}
Set \(\gamma=q(1-s)>0\).  Then
\[
 q(1-s)\mathcal D_{s,q}^{(k)}f(x)^q
 =\gamma\int_0^\infty T(t)t^{\gamma-1}\,dt.
\]
As \(s\to1^-\), \(\gamma\to0^+\), and Theorem~\ref{thm:BBM}(I) gives
\[
 \lim_{s\to1^-}q(1-s)\mathcal D_{s,q}^{(k)}f(x)^q
 =\lim_{t\to0^+}T(t)
 =\mathcal A_{n,k,q}(D^kf(x))^q.
\]
Taking the positive \(q\)-th root proves the theorem.
\end{proof}

\subsection{Higher-order BSVY formula}

We first state an auxiliary lemma.

\begin{lemma}\label{lem:difference-maximal}
For every \(k\in\mathbb N\),
\[
 \frac{|\Delta_{t\theta}^kf(x)|}{t^k}
 \le kM_\theta(|D_\theta^kf|)(x),
 \qquad t>0,\quad\theta\in\mathbb S^{n-1}.
\]
\end{lemma}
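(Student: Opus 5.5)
The plan is to start from the integral representation in Lemma~\ref{H-BBM-lim} and reduce the $k$-fold cube average to a one-dimensional average along the ray $\{x+r\theta: r\ge 0\}$. Writing $g:=D_\theta^kf$ (we assume, as in that lemma, that $f$ is $C^k$ near the segment $\{x+u\theta:0\le u\le kt\}$), the lemma gives
\[
 \frac{|\Delta_{t\theta}^kf(x)|}{t^k}
 \le\int_{[0,1]^k}\bigl|g\bigl(x+t(s_1+\cdots+s_k)\theta\bigr)\bigr|
 \,ds_1\cdots ds_k .
\]
The integrand depends only on the sum $u=s_1+\cdots+s_k$. So the natural step is to push the uniform measure on $[0,1]^k$ forward under $(s_1,\dots,s_k)\mapsto s_1+\cdots+s_k$. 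This yields the Irwin--Hall density $\varphi_k$, supported in $[0,k]$, and the right-hand side becomes $\int_0^k |g(x+tu\theta)|\,\varphi_k(u)\,du$.

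The key (and only slightly nontrivial) step is the bound $\varphi_k\le 1$ on $[0,k]$. I would prove it by induction using $\varphi_k=\varphi_{k-1}*\one_{[0,1]}$. Indeed,
\[
 \varphi_k(u)=\int_{u-1}^{u}\varphi_{k-1}(v)\,dv\le\int_{\mathbb R}\varphi_{k-1}=1 ,
\]
since $\varphi_{k-1}$ is a probability density; the base case is $\varphi_1=\one_{[0,1]}$. This holds for all $k\ge1$ with no further input.

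With $\varphi_k\le1$ in hand, the remaining step is a change of variables $r=tu$:
\[
 \frac{|\Delta_{t\theta}^kf(x)|}{t^k}
 \le\int_0^k|g(x+tu\theta)|\,du
 =\frac1t\int_0^{kt}|g(x+r\theta)|\,dr
 =k\cdot\frac1{kt}\int_0^{kt}|g(x+r\theta)|\,dr
 \le k\,M_\theta(|D_\theta^kf|)(x).
\]
The last inequality is just the definition of $M_\theta$ evaluated at radius $kt$. I expect no real obstacle beyond justifying the density bound. The constant $k$ comes entirely from the length of the support $[0,k]$ of $\varphi_k$, and a sharper constant could be obtained from $\|\varphi_k\|_\infty$, but that is not needed here.
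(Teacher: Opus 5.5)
Your proposal is correct and follows essentially the same route as the paper. Both proofs push the cube average in Lemma~\ref{H-BBM-lim} forward to the density $\rho_k=\mathbf 1_{[0,1]}*\cdots*\mathbf 1_{[0,1]}$ on $[0,k]$, bound $\rho_k\le 1$ through the convolution recursion, and change variables $r=tu$ to obtain $k$ times the directional average at radius $kt$. The only cosmetic difference is that you bound $\varphi_k(u)=\int_{u-1}^{u}\varphi_{k-1}(v)\,dv$ by the unit mass of $\varphi_{k-1}$, while the paper phrases this step as a recursion on the sup norm, and both are immediate.
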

\begin{proof}
Let
\[
\rho_k
=
\underbrace{
\mathbf{1}_{[0,1]}*\cdots*\mathbf{1}_{[0,1]}
}_{k\text{ factors}}.
\]
Thus, \(\rho_k\) is supported in \([0,k]\), and for every bounded measurable
function \(G\) on \([0,k]\),
\[
\int_{[0,1]^k}
G(s_1+\cdots+s_k)\,ds_1\cdots ds_k
=
\int_0^k G(u)\rho_k(u)\,du.
\]
Moreover,
\[
\rho_k\ge 0,
\qquad
\int_0^k \rho_k(u)\,du=1,
\qquad
\|\rho_k\|_{L^\infty(0,k)}\le 1.
\]
Indeed, the last estimate follows recursively from
\[
\rho_k(u)
=
\int_0^1 \rho_{k-1}(u-s)\,ds
\]
and \(\|\rho_1\|_{L^\infty}=1\).

Applying the preceding integral identity to
\[
G(u):=D_\theta^k f(x+tu\theta)
\]
and using Lemma~\ref{H-BBM-lim}, we obtain
\[
\frac{\Delta_{t\theta}^k f(x)}{t^k}
=
\int_0^k
D_\theta^k f(x+tu\theta)\rho_k(u)\,du.
\]
Using \(0\le \rho_k\le 1\), we find
\begin{align*}
\frac{|\Delta_{t\theta}^k f(x)|}{t^k}
&\le
\int_0^k
|D_\theta^k f(x+tu\theta)|\,du
=
\frac{1}{t}
\int_0^{kt}
|D_\theta^k f(x+r\theta)|\,dr \\
&=
k\left(
\frac{1}{kt}
\int_0^{kt}
|D_\theta^k f(x+r\theta)|\,dr
\right) \le
kM_\theta\bigl(|D_\theta^k f|\bigr)(x).
\end{align*}
This proves the desired estimate.
\end{proof}

For \(\lambda>0\), set
\[
 E_{x,k}(f,\lambda,q,\gamma)
 :=\left\{h\in\mathbb R^n:
 |\Delta_h^kf(x)|>\lambda |h|^{k+\gamma/q}\right\}.
\]

\begin{theorem}
Let \(k\in\mathbb N\), \(f\in C_c^k(\mathbb R^n)\),
\(\gamma\ne0\), and \(0<q<\infty\).  Then
\[
\begin{cases}
\displaystyle
\lim_{\lambda\to\infty}\lambda
\left(\int_{E_{x,k}(f,\lambda,q,\gamma)}
|h|^{\gamma-n}\,dh\right)^{1/q}
=\gamma^{-1/q}\mathcal A_{n,k,q}(D^kf(x)),
&\gamma>0,\\[12pt]
\displaystyle
\lim_{\lambda\to0^+}\lambda
\left(\int_{E_{x,k}(f,\lambda,q,\gamma)}
|h|^{\gamma-n}\,dh\right)^{1/q}
=(-\gamma)^{-1/q}\mathcal A_{n,k,q}(D^kf(x)),
&\gamma<0.
\end{cases}
\]
Moreover,
\[
\begin{split}
&\sup_{\lambda>0}\lambda
\left(\int_{E_{x,k}(f,\lambda,q,\gamma)}
|h|^{\gamma-n}\,dh\right)^{1/q}\le C_k|\gamma|^{-1/q}
\left(\int_{\mathbb S^{n-1}}
 M_\theta(|D_\theta^kf|)(x)^q\,d\theta\right)^{1/q}.
\end{split}
\]
\end{theorem}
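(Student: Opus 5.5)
The argument mirrors the first-order pointwise BSVY theorem, with first differences replaced by $k$-th order differences and $|\nabla f(x)\cdot\theta|$ by $|D^kf(x)[\theta^k]|$. Fix $x$, and for each $\theta\in\mathbb S^{n-1}$ define the radial profile
\[
 T_\theta(t):=\frac{|\Delta_{t\theta}^kf(x)|}{t^k},\qquad t>0 .
\]
Writing $h=t\theta$ and using the weighted polar identity \eqref{eq:polar-weighted} in the variable $h$ (centred at the origin rather than at $x$), the condition $|\Delta_h^kf(x)|>\lambda|h|^{k+\gamma/q}$ becomes $T_\theta(t)/t^{\gamma/q}>\lambda$. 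This gives
\[
 \int_{E_{x,k}(f,\lambda,q,\gamma)}|h|^{\gamma-n}\,dh
 =\int_{\mathbb S^{n-1}}
 w_\gamma\left(\left\{t>0:\frac{T_\theta(t)}{t^{\gamma/q}}>\lambda\right\}\right)d\theta ,
\]
so the statement reduces to a one-dimensional question for each direction, followed by an integration over the sphere.

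Next I check the hypotheses of Theorem~\ref{thm:BSVY}(I) for each fixed $\theta$. Lemma~\ref{H-BBM-lim} gives $\lim_{t\to0^+}T_\theta(t)=|D^kf(x)[\theta^k]|$. Lemma~\ref{lem:difference-maximal} gives $\sup_{t>0}T_\theta(t)\le kM_\theta(|D_\theta^kf|)(x)$. This bound is finite because $|D_\theta^kf|\le \|D^kf\|_{L^\infty}$, which is finite for $f\in C_c^k$; in fact $M_\theta(|D^k_\theta f|)(x)\le\|D^kf\|_{\infty}$ uniformly in $\theta$. Theorem~\ref{thm:BSVY}(I) then yields, for each $\theta$,
\[
\lambda^q w_\gamma(\{T_\theta/t^{\gamma/q}>\lambda\})\to |D^kf(x)[\theta^k]|^q/|\gamma|,
\]
as $\lambda\to\infty$ when $\gamma>0$ and as $\lambda\to0^+$ when $\gamma<0$. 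The universal estimate \eqref{BSY:universal} gives the uniform domination
\[
\lambda^q w_\gamma(\cdots)\le k^qM_\theta(|D_\theta^kf|)(x)^q/|\gamma|\le k^q\|D^kf\|_\infty^q/|\gamma|,
\]
a constant, hence integrable on $\mathbb S^{n-1}$.

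By dominated convergence in $\theta$,
\[
\lambda^q\int_{E_{x,k}}|h|^{\gamma-n}\,dh\to |\gamma|^{-1}\int_{\mathbb S^{n-1}}|D^kf(x)[\theta^k]|^q\,d\theta=|\gamma|^{-1}\mathcal A_{n,k,q}(D^kf(x))^q .
\]
Taking the $q$-th root gives both limit formulas. For the supremum bound, integrate the pointwise universal estimate over $\theta$ before taking the supremum in $\lambda$; this gives the claim with $C_k=k$. The only delicate point is measurability in $(t,\theta)$, so that the polar decomposition and Fubini apply. This is immediate because $\Delta^k_{t\theta}f(x)$ is continuous in $(t,\theta)$. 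Beyond that there is no real obstacle; the main care needed is tracking the exponent $k+\gamma/q$ correctly in the change of variables.
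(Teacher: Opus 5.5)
Your proposal is correct and follows essentially the same route as the paper. You reduce via the weighted polar identity to the radial profiles $T_\theta(t)=|\Delta^k_{t\theta}f(x)|/t^k$, apply Theorem~\ref{thm:BSVY}(I) in each direction using Lemma~\ref{H-BBM-lim} for the limit at the origin and Lemma~\ref{lem:difference-maximal} for the uniform bound, and then integrate over $\mathbb S^{n-1}$ by dominated convergence with the universal estimate as majorant, which gives $C_k=k$; your extra remarks on the $\theta$-uniform bound by $\|D^kf\|_\infty$ and on joint continuity in $(t,\theta)$ are consistent with the paper's argument and make it slightly more explicit.
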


\begin{proof}
For each \(\theta\in\mathbb S^{n-1}\), define
\[
 T(t)
 :=\frac{|\Delta_{t\theta}^kf(x)|}{t^k},
 \qquad t>0.
\]
Writing \(h=t\theta\), the level-set condition becomes
\(
 \frac{T_{x,\theta}(t)}{t^{\gamma/q}}>\lambda.
\)
Thus
\[
 \int_{E_{x,k}(f,\lambda,q,\gamma)}|h|^{\gamma-n}\,dh
 =\int_{\mathbb S^{n-1}}
 w_\gamma\left(\left\{t>0:
 \frac{T(t)}{t^{\gamma/q}}>\lambda\right\}\right)d\theta.
\]
By Lemma~\ref{H-BBM-lim},
\[
 \lim_{t\to0^+}T(t)
 =|D_\theta^kf(x)|
 =|D^kf(x)[\theta^k]|.
\]
Lemma~\ref{lem:difference-maximal} gives the uniform-in-\(t\) bound
\[
 0\le T(t)
 \le kM_\theta(|D_\theta^kf|)(x).
\]
For \(f\in C_c^k\), the function on the right is bounded uniformly in
\(\theta\), since
\(|D_\theta^kf|\le |D^kf|_{\mathrm{op}}\).  It is therefore integrable to
any finite power over \(\mathbb S^{n-1}\).

Assume first that \(\gamma>0\).  Theorem~\ref{thm:BSVY}(I), applied for each
fixed \(\theta\), gives
\[
 \lim_{\lambda\to\infty}\lambda^q
 w_\gamma\left(\left\{t>0:
 \frac{T(t)}{t^{\gamma/q}}>\lambda\right\}\right)
 =\frac{|D_\theta^kf(x)|^q}{\gamma}.
\]
Its universal estimate gives the integrable majorant
\[
 0\le\lambda^q
 w_\gamma\left(\left\{t>0:
 \frac{T(t)}{t^{\gamma/q}}>\lambda\right\}\right)
 \le\frac{k^q}{\gamma}
 M_\theta(|D_\theta^kf|)(x)^q.
\]
Dominated convergence in \(\theta\) now yields
\begin{align*}
 &\lim_{\lambda\to\infty}\lambda^q
 \int_{E_{x,k}(f,\lambda,q,\gamma)}|h|^{\gamma-n}\,dh
=\frac1\gamma\int_{\mathbb S^{n-1}}
 |D^kf(x)[\theta^k]|^q\,d\theta
=\frac1\gamma\mathcal A_{n,k,q}(D^kf(x))^q.
\end{align*}
Taking the \(q\)-th root gives the first limit.

When \(\gamma<0\), Theorem~\ref{thm:BSVY}(I) is used with
\(\lambda\to0^+\), and the same dominated-convergence argument gives
\[
 \lim_{\lambda\to0^+}\lambda^q
 \int_{E_{x,k}(f,\lambda,q,\gamma)}|h|^{\gamma-n}\,dh
 =\frac1{-\gamma}\mathcal A_{n,k,q}(D^kf(x))^q.
\]
This proves the second limit.  Finally, integrating the universal estimate in
\(\theta\) for arbitrary \(\lambda>0\) gives
\begin{align*}
 &\lambda^q
 \int_{E_{x,k}(f,\lambda,q,\gamma)}|h|^{\gamma-n}\,dh\le\frac{k^q}{|\gamma|}
 \int_{\mathbb S^{n-1}}
 M_\theta(|D_\theta^kf|)(x)^q\,d\theta.
\end{align*}
Taking the supremum in \(\lambda\) and then the \(q\)-th root proves the
stated weak bound with \(C_k=k\).
\end{proof}

\subsection{Higher-order Frank-type oscillation}

Let \(\mathcal P_{k-1}\) be the vector space of polynomials on \(\mathbb R^n\)
of degree at most \(k-1\), and define
\[
 \operatorname{osc}_k(f;B(x,r))
 :=\inf_{P\in\mathcal P_{k-1}}
 \frac1{|B(x,r)|}\int_{B(x,r)}|f(y)-P(y)|\,dy.
\]
For \(A\in\operatorname{Sym}^k(\mathbb R^n)\), let
\[
 P_A(z):=\frac1{k!}A[z^k]
\]
and set
\[
 \mathfrak C_{n,k}(A)
 :=\inf_{P\in\mathcal P_{k-1}}
 \frac1{|B_1|}\int_{B_1}|P_A(z)-P(z)|\,dz.
\]
Thus \(\mathfrak C_{n,k}(A)\) is the distance of the homogeneous degree-
\(k\) polynomial generated by \(A\) from all lower-order polynomials.

\begin{theorem}
Let \(k\in\mathbb N\) and \(f\in C_{c}^k\).  Then
\[
 \lim_{r\to0^+}r^{-k}\operatorname{osc}_k(f;B(x,r))
 =\mathfrak C_{n,k}(D^kf(x)).
\]
Moreover, there are constants
\(0<a_{n,k}\le b_{n,k}<\infty\) such that
\[
 a_{n,k}|A|_E\le\mathfrak C_{n,k}(A)\le b_{n,k}|A|_E
 \qquad\text{for all }A\in\operatorname{Sym}^k(\mathbb R^n).
\]
\end{theorem}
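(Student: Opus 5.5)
The plan is to reduce the limit to a Taylor expansion plus rescaling, in the same spirit as the proof of Theorem~\ref{thm:Frank-osc}. Then the two-sided comparability follows from homogeneity and compactness.

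\emph{Step 1: Taylor decomposition.} Fix $x$ and write
\[
f(y)=P_x(y)+P_{D^kf(x)}(y-x)+\eta_x(y),
\]
where $P_x\in\mathcal P_{k-1}$ is the Taylor polynomial of order $k-1$ at $x$. The remainder satisfies
\[
|\eta_x(y)|\le \omega_x(|y-x|)\,|y-x|^k,
\]
with $\omega_x(t)\to0$ as $t\to0^+$, by the integral form of the Taylor remainder and continuity of $D^kf$.

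\emph{Step 2: the oscillation is unchanged by polynomials.} The class $\mathcal P_{k-1}$ is invariant under adding $P_x$. It is also invariant under the affine change $y=x+rz$, followed by division by $r^k$. Hence
\[
r^{-k}\operatorname{osc}_k(f;B(x,r))
=\inf_{P\in\mathcal P_{k-1}}\frac1{|B_1|}\int_{B_1}\bigl|P_{D^kf(x)}(z)+r^{-k}\eta_x(x+rz)-P(z)\bigr|\,dz .
\]
The perturbation $r^{-k}\eta_x(x+rz)$ is bounded by $\omega_x(r)$ uniformly on $B_1$. The map $g\mapsto \inf_P\|g-P\|_{L^1(B_1)}/|B_1|$ is $1$-Lipschitz in $L^1$, since a distance to a subspace is $1$-Lipschitz. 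Therefore
\[
\bigl|r^{-k}\operatorname{osc}_k(f;B(x,r))-\mathfrak C_{n,k}(D^kf(x))\bigr|\le\omega_x(r)\to0 ,
\]
which proves the limit.

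\emph{Step 3: the comparability $a_{n,k}|A|_E\le\mathfrak C_{n,k}(A)\le b_{n,k}|A|_E$.} The functional $\mathfrak C_{n,k}$ is a seminorm on the finite-dimensional space $\operatorname{Sym}^k(\mathbb R^n)$, because it is the quotient norm of $L^1(B_1)$ modulo $\mathcal P_{k-1}$ composed with the linear map $A\mapsto P_A$.

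\begin{itemize}
\item \emph{Upper bound.} Take $P=0$ to get $\mathfrak C_{n,k}(A)\le \sup_{B_1}|P_A|\le |A|_E/k!$.
\item \emph{Lower bound.} It suffices to show that $\mathfrak C_{n,k}$ is a norm, i.e.\ vanishes only at $A=0$. A seminorm which is a norm on a finite-dimensional space is comparable to $|\cdot|_E$. This can be seen by continuity on the Euclidean unit sphere and compactness, or simply by the equivalence of norms in finite dimensions.
\end{itemize}

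\emph{Step 4: definiteness, the main obstacle.} This step needs two facts.
\begin{itemize}
\item \emph{The infimum is attained.} $\mathcal P_{k-1}$ is a finite-dimensional, hence closed, subspace of $L^1(B_1)$.
\item \emph{No cancellation.} If $\mathfrak C_{n,k}(A)=0$, then $P_A=P$ a.e.\ on $B_1$ for some $P\in\mathcal P_{k-1}$. Two polynomials agreeing a.e.\ on an open set coincide identically. Since $P_A$ is homogeneous of degree $k$, its degree-$k$ part must vanish, so $P_A\equiv0$. Polarization then gives $A=0$, because a symmetric tensor is determined by $A[z^k]$.
\end{itemize}

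The only delicate points are the closedness and attainment argument, together with the remark that the constants $a_{n,k},b_{n,k}$ depend only on $n,k$, since the whole construction is dimension-intrinsic.
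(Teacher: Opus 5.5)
Your proposal is correct and follows essentially the same route as the paper. The order-$k$ Taylor expansion, the bijection of $\mathcal P_{k-1}$ under $y=x+rz$ with division by $r^k$, and the $1$-Lipschitz property of the distance to $\mathcal P_{k-1}$ in $L^1(B_1)$ give the limit with error at most $\omega_x(r)$, and the two-sided comparability then follows from finite-dimensionality and compactness. Your Step 4 (definiteness of $\mathfrak C_{n,k}$ via closedness of $\mathcal P_{k-1}$, uniqueness of polynomials on open sets, and polarization) and your explicit bound $\mathfrak C_{n,k}(A)\le |A|_E/k!$ actually supply the positivity and continuity that the paper only asserts when it calls $A\mapsto\mathfrak C_{n,k}(A)$ a continuous positive function on the unit sphere.
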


\begin{proof}
By Taylor's theorem,
\[
 f(x+h)=\sum_{|\alpha| \le k} \frac{D^\alpha f(x)}{\alpha!} \, h^\alpha+R_x(h),
\]
where
\[
 \omega_x(r):=
 \sup_{0<|h|\le r}\frac{|R_x(h)|}{|h|^k}
 \longrightarrow0
 \qquad(r\to0^+).
\]
For \(z\in B_1\), set
$
 \eta_r(z):=r^{-k}R_x(rz),
$
then
$
 \|\eta_r\|_{L^\infty(B_1)}\le\omega_x(r).
$
Since
\[\sum_{|\alpha| \le k} \frac{D^\alpha f(x)}{\alpha!} \, h^\alpha-\sum_{|\alpha| \le k-1} \frac{D^\alpha f(x)}{\alpha!} \, h^\alpha=\frac 1{k!}D^kf(x)[h^k],\]
then
\[
 r^{-k}\Bigl(f(x+rz)-\sum_{|\alpha| \le k-1} \frac{D^\alpha f(x)}{\alpha!} \, (rz)^\alpha\Bigr)
 =P_{D^kf(x)}(z)+\eta_r(z).
\]

We next justify the scaling of the minimizing polynomial.  Given
\(P\in\mathcal P_{k-1}\), define
\[
 Q_{P,r}(z)
 :=r^{-k}\Bigl(P(x+rz)-\sum_{|\alpha| \le k-1} \frac{D^\alpha f(x)}{\alpha!} \, (rz)^\alpha\Bigr).
\]
Then \(Q_{P,r}\in\mathcal P_{k-1}\).  Conversely, for any
\(Q\in\mathcal P_{k-1}\), the polynomial
\[
 P_{Q,r}(y)
 :=\sum_{|\alpha| \le k-1} \frac{D^\alpha f(x)}{\alpha!} \, (y-x)^\alpha+r^kQ\left(\frac{y-x}{r}\right)
\]
belongs to \(\mathcal P_{k-1}\), and the two transformations are inverse to
one another.  Therefore the change of variables \(y=x+rz\) gives the exact
identity
\begin{align*}
 r^{-k}\operatorname{osc}_k(f;B(x,r))
 &=\inf_{Q\in\mathcal P_{k-1}}
 \frac1{|B_1|}\int_{B_1}
 |P_{D^kf(x)}(z)+\eta_r(z)-Q(z)|\,dz.
\end{align*}
Let \(X=L^1(B_1,dz/|B_1|)\) and \(V=\mathcal P_{k-1}\), regarded as a
finite-dimensional closed subspace of \(X\).  The distance to a closed
subspace is one-Lipschitz:
\[
 |\operatorname{dist}_X(u,V)-\operatorname{dist}_X(v,V)|
 \le\|u-v\|_X.
\]
Indeed, for every \(P\in V\),
\[
 \|u-P\|_X\le\|u-v\|_X+\|v-P\|_X,
\]
and taking the infimum first in \(P\), then interchanging \(u\) and \(v\),
proves the claim.  Applying this property with
\(u=P_{D^kf(x)}+\eta_r\) and \(v=P_{D^kf(x)}\), we obtain
\begin{align*}
 &\left|r^{-k}\operatorname{osc}_k(f;B(x,r))
 -\mathfrak C_{n,k}(D^kf(x))\right|\\
 &\qquad\le\|\eta_r\|_X
 \le\|\eta_r\|_{L^\infty(B_1)}
 \le\omega_x(r).
\end{align*}
Since \(\omega_x(r)\to0\), the asserted limit follows.

The two-sided comparison with \(|\cdot|_E\) now follows from compactness of
the Euclidean unit sphere in the finite-dimensional tensor space: the
continuous positive function \(A\mapsto\mathfrak C_{n,k}(A)\) has a positive
minimum and a finite maximum on \(\{|A|_E=1\}\).
\end{proof}

\section{Proofs of Theorem~\ref{thm:unified-frac} and  Corollary \ref{cor:frac}}

\begin{proof}[Proof of Theorem~\ref{thm:unified-frac}]
For \(0<\sigma<1\), the identity
\[
 \frac1{|\Gamma(-\sigma)|}=\frac{\sigma}{\Gamma(1-\sigma)}
\]
follows from \(\Gamma(1-\sigma)=-\sigma\Gamma(-\sigma)\).

For the limit \(\sigma\to1^-\), write
\(T(t)=(f(x)-e^{-tL}f(x))/t\) and \(\gamma=1-\sigma\). Thus,
\[
L^\sigma f(x)=\frac{\sigma}{\Gamma(\gamma)}
\int_0^\infty T(t)t^{\gamma-1}\,dt
=\frac{\sigma}{\gamma\Gamma(\gamma)}
\left[\gamma\int_0^\infty T(t)t^{\gamma-1}\,dt\right].
\]

For \(\sigma\to0^+\), write
\(S(t)=f(x)-e^{-tL}f(x)\) and \(\beta=-\sigma\). Then
\[
L^\sigma f(x)=\frac1{\Gamma(1-\sigma)}
\left[(-\beta)\int_0^\infty S(t)t^{\beta-1}\,dt\right].
\]
Theorem~\ref{thm:BBM}, \(\gamma\Gamma(\gamma)\to1\) and \(\Gamma(1-\sigma)\to1\) give
\eqref{eq:unified-limit-1}.
\end{proof}

\begin{proof}[Proof of Corollary \ref{cor:frac}]
Since the arguments are similar, we give the details for the heat semigroup only.
To this end, it suffices to show that for $L=-\Delta$, there exist constants $a,C>0$ such that the following three properties hold:
\begin{align}
 |f(x)-e^{-tL}f(x)|\le Ct\quad(0<t\le1),
 \label{eq:cor1}
\end{align}
\begin{align}
 |e^{-tL}f(x)|\le Ct^{-a}\quad(t\ge1),
\label{eq:cor2}
\end{align}
and
\begin{align}
\lim_{t\to 0^+} \frac{f(x)-e^{-tL}f(x)}t = Lf(x).
\label{eq:cor3}
\end{align}

For $L=-\Delta$, we have $e^{-tL}=e^{t\Delta}$ and
\begin{align}
f(x)-e^{t\Delta}f(x)
&=
-\int_0^t \frac{d}{dr}e^{r\Delta}f(x)\,dr
=
\int_0^t e^{r\Delta}(-\Delta f)(x)\,dr.
\label{eq:heat-difference-cor}
\end{align}
Consequently,
\begin{equation}
\frac{f(x)-e^{t\Delta}f(x)}{t}
=
\int_0^1 e^{st\Delta}(-\Delta f)(x)\,ds.
\label{eq:heat-average-cor}
\end{equation}
Since $-\Delta f\in\mathcal{S}(\mathbb{R}^n)$, we have
$\lim_{t\to0^+}e^{st\Delta}(-\Delta f)(x)= -\Delta f(x)$
for every $s\in[0,1]$. Moreover,
\[
\left|e^{st\Delta}(-\Delta f)(x)\right|
\le \|\Delta f\|_{L^\infty(\mathbb{R}^n)}.
\]
Thus, applying the dominated convergence theorem in \eqref{eq:heat-average-cor} yields \eqref{eq:cor3}.

The heat kernel representation gives
\[
e^{t\Delta}f(x)
=
\frac{1}{(4\pi t)^{n/2}}
\int_{\mathbb{R}^n}
\exp\left(-\frac{|x-y|^2}{4t}\right)f(y)\,dy.
\]
Hence
\begin{equation}
|e^{t\Delta}f(x)|
\le
(4\pi t)^{-n/2}\|f\|_{L^1(\mathbb{R}^n)}
\to 0
\qquad\text{as }t\to\infty.
\label{eq:heat-decay-cor}
\end{equation}

For $0<t\le1$, it follows from \eqref{eq:heat-difference-cor} that
\[
|f(x)-e^{t\Delta}f(x)|
\le
\int_0^t
\|e^{r\Delta}(-\Delta f)\|_{L^\infty}\,dr
\le
t\|\Delta f\|_{L^\infty}.
\]
Therefore,
\[
\left|
\frac{f(x)-e^{t\Delta}f(x)}{t}
\right|
\le
\|\Delta f\|_{L^\infty},
\qquad 0<t\le1.
\]
For $t\ge1$, \eqref{eq:heat-decay-cor} gives
\[
|f(x)-e^{t\Delta}f(x)|
\le
\|f\|_{L^{\infty}}+(4\pi)^{-n/2}\|f\|_{L^1},
\]
and hence
\[
\left|
\frac{f(x)-e^{t\Delta}f(x)}{t}
\right|
\le
\frac{\|f\|_{L^{\infty}}+(4\pi)^{-n/2}\|f\|_{L^1}}{t}.
\]
Combining the estimates above, we obtain \eqref{eq:cor1} and \eqref{eq:cor2}.
\end{proof}

\section{Proofs of Theorem~\ref{thm:AI} and Corollary \ref{cor:AI}}

Let
\[
K_t(x):=t^{-n}K(x/t),\qquad t>0,
\]
where
\[
K\in L^1,
\qquad
\int_{\mathbb R^n}K(x)\,dx=1.
\]
For every \(\delta>0\), a change of variables gives
\[
\int_{\{|x|>\delta\}}|K_t(x)|\,dx
=
\int_{\{|z|>\delta/t\}}|K(z)|\,dz
\longrightarrow0
\qquad(t\to0^+).
\]
Thus \(\{K_t\}_{t>0}\) satisfies the usual concentration condition for an
approximate identity.
 The following result provides both the pointwise convergence and the
maximal estimate required in Theorem~\ref{thm:AI}, see, for
example, \cite[Corollaries 2.1.12 and 2.1.19]{Grafakos14}.

\begin{lemma}\label{lem:AI-radial-majorant}
Suppose that
\[
|K(x)|\leq\Phi(x)=\phi(|x|),
\qquad
\Phi\in L^1,
\]
where \(\phi:[0,\infty)\to[0,\infty)\) is non-increasing. Then
\[
\lim_{t\to0^+}K_t*f(x)=f(x),
\]
and
\[
\sup_{t>0}|K_t*f(x)|
\leq
\|\Phi\|_{L^1}M^cf(x).
\]
\end{lemma}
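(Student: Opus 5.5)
We will prove the maximal estimate first and then deduce the pointwise convergence from it by density, as in the standard approximate-identity argument. Here $M^c$ is the centered Hardy--Littlewood maximal operator. Throughout we fix $x$ and $t>0$, and we interpret the pointwise limit as holding at every Lebesgue point of $f\in L^1_{\mathrm{loc}}$ for which $M^cf(x)<\infty$; that is the setting of Theorem~\ref{thm:AI}.

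\textbf{Maximal bound.} Since $|K_t*f(x)|\le \int \Phi_t(y)|f(x-y)|\,dy$ with $\Phi_t(y)=t^{-n}\phi(|y|/t)$, it suffices to show
\[
\int \Phi(y)|g(x-y)|\,dy\le\|\Phi\|_{L^1}M^cg(x)
\]
for a radial, non-increasing, integrable $\Phi$. Applying this to the rescaled majorant $\Phi_t$ then gives the bound uniformly in $t$, because $\|\Phi_t\|_{L^1}=\|\Phi\|_{L^1}$.

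\emph{Step 1: simple majorants.} If $\Phi$ is a finite positive combination $\sum_j a_j\one_{B(0,r_j)}$, the bound is immediate: each term contributes at most $a_j|B(0,r_j)|\,M^cg(x)$, and these contributions sum to $\|\Phi\|_{L^1}M^cg(x)$.

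\emph{Step 2: general majorants.} A general radial non-increasing $\Phi$ is an increasing limit of such simple functions. One way to build them is to truncate the layer-cake representation $\phi(r)=\int_0^\infty\one_{\{\phi>s\}}(r)\,ds$, noting that each superlevel set $\{\phi>s\}$ is an interval $[0,\rho)$ or $[0,\rho]$. Monotone convergence then transfers the Step~1 estimate to $\Phi$. The one technical point is the lower semicontinuity issue of open versus closed balls; this is harmless because spheres have measure zero.

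\textbf{Pointwise convergence.} Write
\[
K_t*f(x)-f(x)=\int K_t(y)\bigl(f(x-y)-f(x)\bigr)\,dy,
\]
using $\int K=1$. Fix $\varepsilon>0$ and choose $\delta$ so that the averages of $|f(x-\cdot)-f(x)|$ over $B(0,r)$ are at most $\varepsilon$ for all $r\le\delta$; this is possible because $x$ is a Lebesgue point. Split the integral into the regions $|y|<\delta$ and $|y|\ge\delta$.

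\emph{Inner region.} Apply the argument of the maximal bound to $g(y)=|f(x-y)-f(x)|\one_{|y|<\delta}$. Every centered average of this $g$ at the origin is at most $\varepsilon$, so the inner part is bounded by $\|\Phi\|_{L^1}\varepsilon$.

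\emph{Outer region.} This part is bounded by
\[
|f(x)|\int_{|y|\ge\delta}\Phi_t\;+\;\int_{|y|\ge\delta}\Phi_t(y)|f(x-y)|\,dy .
\]
The first term tends to $0$ as $t\to0^+$ by the concentration computation just before the lemma.

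\textbf{Main obstacle.} The hard step is the second outer term. Using monotonicity, on $|y|\ge\delta$ we have
\[
\Phi_t(y)\le \sup_{|y|\ge\delta}\Phi_t(y)=t^{-n}\phi(\delta/t).
\]
Since $\phi$ is non-increasing and integrable in $\mathbb R^n$, we get $r^n\phi(r)\to0$ as $r\to\infty$, so this supremum is $o(\delta^{-n})$. That alone does not control the term, because $f$ is only locally integrable. We therefore handle it with the dyadic decomposition $\{2^k\delta\le|y|<2^{k+1}\delta\}$. On the $k$-th shell, $\Phi_t\le t^{-n}\phi(2^k\delta/t)$, and the integral of $|f(x-\cdot)|$ over the ball of radius $2^{k+1}\delta$ is at most $(2^{k+1}\delta)^n|B_1|\,M^cf(x)$. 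The resulting series is dominated by
\[
C\,M^cf(x)\int_{|z|\ge\delta/(2t)}\Phi(z)\,dz,
\]
using that $\phi$ is non-increasing to compare the sum with the integral. This tends to $0$ as $t\to0^+$ whenever $M^cf(x)<\infty$. Letting $t\to0^+$ and then $\varepsilon\to0$ gives the limit.
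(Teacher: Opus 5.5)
Your proof is correct. The paper does not prove this lemma itself; it only cites Grafakos. There, the maximal bound is obtained essentially as in your Steps~1--2, by approximating the radially decreasing majorant with positive combinations of indicators of centered balls. The convergence is then derived for $f\in L^p$ from that maximal inequality.

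Despite your opening sentence, you never actually use a density argument, and that works in your favour. Density plus a maximal inequality yields only almost-everywhere convergence, whereas your argument gives convergence at each prescribed Lebesgue point $x$ of an $f\in L^1_{\mathrm{loc}}$ with $M^cf(x)<\infty$. You handle the two parts as follows:
\begin{enumerate}
\item The inner part: you apply the maximal bound to $|f(x-\cdot)-f(x)|\one_{\{|\cdot|<\delta\}}$, all of whose centered averages at the origin are at most $\varepsilon$.
\item The tail: you use the dyadic comparison $\sum_{k\ge0}(2^ks)^n\phi(2^ks)\lesssim\int_{|z|\ge s/2}\Phi(z)\,dz$ with $s=\delta/t$. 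This needs only $M^cf(x)<\infty$, rather than a global $L^p$ bound and H\"older's inequality.
\end{enumerate}

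This is exactly the setting in which Theorem~\ref{thm:AI} uses the lemma, since its proof relies on $0\le T(t)\le M^cf(x)<\infty$ at a fixed Lebesgue point. It also contains the $L^p$ case, because for $f\in L^p$ one has $M^cf(x)<\infty$ at every Lebesgue point.

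You could shorten the tail step. On $\{|y|\ge\delta\}$ one has $\Phi_t\le\min\{\Phi_t,\,t^{-n}\phi(\delta/t)\}$, which is again radial and non-increasing. Its $L^1$ norm is $\phi(s)|B(0,s)|+\int_{|z|>s}\Phi(z)\,dz$, and this tends to $0$ as $s=\delta/t\to\infty$ because $s^n\phi(s)\to0$. So the maximal bound itself disposes of the tail without the dyadic decomposition.
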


\begin{proof}[Proof of Theorem~\ref{thm:AI}]
Set
$
T(t):=|K_t*f(x)|.
$
By Lemma \ref{lem:AI-radial-majorant}, we have
\[
\lim_{t\to 0^+}T(t)=|f(x)|,
\qquad
0\leq T(t)\leq M^cf(x)<\infty.
\]
Applying Theorem~\ref{thm:BSVY}(I) to \(T\), we obtain, for \(\gamma>0\),
\[
\lim_{\lambda\to\infty}
\lambda^q
w_\gamma
\left(
\left\{
t>0:
\frac{|K_t*f(x)|}{t^{\gamma/q}}>\lambda
\right\}
\right)
=
\frac{|f(x)|^q}{\gamma},
\]
whereas, for \(\gamma<0\),
\[
\lim_{\lambda\to0^+}
\lambda^q
w_\gamma
\left(
\left\{
t>0:
\frac{|K_t*f(x)|}{t^{\gamma/q}}>\lambda
\right\}
\right)
=
\frac{|f(x)|^q}{-\gamma}.
\]
Taking the \(q\)-th root proves the two limiting identities. The universal
estimate in Theorem~\ref{thm:BSVY}(I) also gives
\[
|\gamma|^{1/q}
\sup_{\lambda>0}
\lambda
w_\gamma
\left(
\left\{
t>0:
\frac{|K_t*f(x)|}{t^{\gamma/q}}>\lambda
\right\}
\right)^{1/q}
\leq M^cf(x),
\]
which proves the asserted weak quasi-norm bound.
\end{proof}

\begin{proof}[Proof of Corollary \ref{cor:AI}]
We first consider the Poisson kernel, defined by
\[
P(x):=c_n(1+|x|^2)^{-(n+1)/2}
\]
and \(P_t(x)=t^{-n}P(x/t)\). By the normalization of \(c_n\), the function \(P\) is non-negative, radial, non-increasing, integrable, and
$$\int_{\mathbb R^n}P(x)\,dx=1.$$
Hence Lemma~\ref{lem:AI-radial-majorant} applies and yields both
\[
\lim_{t\to0^+}P_t*f(x)=f(x)
\]
and
\[
\sup_{t>0}|P_t*f(x)|
\leq M  f(x).
\]
The same argument to the remaining approximation families. Therefore, all of them satisfy the hypotheses of Theorem~\ref{thm:AI}, and the desired conclusions follow.
\end{proof}

\section{Limiting behavior for harmonic analysis operators}

The classical limiting weak-type formula for maximal and singular integral
operators was established by Janakiraman~\cite{Janakiraman2005}. Subsequent
extensions include weighted, vector-valued, and Dini-kernel variants; see
\cite{HuHuang08,DingLai17,HouGuoWu19,GuoHeWu21}. In this section, we provide a new proof based on the limit formula.

\subsection{Proofs of \ref{thm:weak:T} and Theorems~\ref{thm:strong:T}}

\begin{proof}[Proof of Theorem~\ref{thm:weak:T}]
For almost every \(\theta\in\mathbb S^{n-1}\), define
\[
 F_\theta(r):=r^n|\mathbf Tf(r\theta)|.
\]
Then \(0\le F_\theta(r)\le G_f(\theta)\) and
\(\lim_{r\to +\infty}F_\theta(r)= A_f(\theta)\). Polar coordinates give
\begin{align}
 &W_\gamma\left(\left\{x:
 \frac{|\mathbf Tf(x)|}{|x|^{\gamma-n}}>\lambda\right\}\right)\notag
 =\int_{\mathbb S^{n-1}}
 w_\gamma\left(\left\{r>0:
 \frac{F_\theta(r)}{r^\gamma}>\lambda\right\}\right)d\theta.
\end{align}
Assume \(\gamma>0\). Theorem~\ref{thm:BSVY}(II), with \(q=1\), yields
\[
\lim_{\lambda\to 0^+}\lambda w_\gamma\left(\left\{r:
 \frac{F_\theta(r)}{r^\gamma}>\lambda\right\}\right)
=\frac{A_f(\theta)}\gamma.
\]
Its universal estimate gives
\[
 0\le\lambda w_\gamma\left(\left\{r>0:
 \frac{F_\theta(r)}{r^\gamma}>\lambda\right\}\right)
 \le\frac{G_f(\theta)}\gamma.
\]
By \(G_f\in L^1(\mathbb S^{n-1})\) and dominated convergence theorem, one has
\[
 \lim_{\lambda\to0^+}\lambda W_\gamma
 \left(\left\{x\in\mathbb R^n:
 \frac{|\mathbf Tf(x)|}{|x|^{\gamma-n}}>\lambda\right\}\right)
 =\frac1\gamma\int_{\mathbb S^{n-1}}A_f(\theta)\,d\theta.
\]
For \(\gamma<0\), Theorem~\ref{thm:BSVY}(II) is used as
\(\lambda\to+\infty\), producing the factor \((-\gamma)^{-1}\).
\end{proof}

\begin{proof}[Proof of Theorem~\ref{thm:strong:T}]
Let \(1<p<p_0\) and put \(\alpha=n(p-1)>0\). Set
\[
 (p-1)\|\mathbf Tf\|_{L^p}^p=I_p^{\mathrm{loc}}+I_p^{\infty},
\]
where the two terms correspond to \(|x|<1\) and \(|x|\ge1\). By H\"older's
inequality,
\[
 I_p^{\mathrm{loc}}
 \le(p-1)|B_1|^{1-p/p_0}
 \|\mathbf Tf\|_{L^{p_0}(B_1)}^p
 \longrightarrow0 \quad \text{as} \quad p\to 1^+.
\]
On the other hand, set
$
 F_\theta(r):=r^n|\mathbf Tf(r\theta)|.
$
Then
\begin{align*}
 I_p^{\infty}
 &=(p-1)\int_{\mathbb S^{n-1}}\int_1^\infty
  F_\theta(r)^p r^{-n(p-1)-1}\,dr\,d\theta\\
 &=\frac1n\int_{\mathbb S^{n-1}}
 \left[\alpha\int_1^\infty
 F_\theta(r)^p r^{-\alpha-1}\,dr\right]d\theta.
\end{align*}
Since \(0\le F_\theta(r)\le G_f(\theta)\), then
\[
 \sup_{r\ge1}| F_\theta(r)^p-F_\theta(r)|
 \le\sup_{0\le u\le G_f(\theta)}|u^p-u|.
\]
The right-hand side tends to zero pointwise as \(p\to1^+\) and is dominated
by \(1+G_f(\theta)^{p_0}\). Hence
\begin{equation*}
\lim_{p\to 1^+} \sup_{r\ge1}\|F_\theta(r)^p- F_\theta(r)\|_{L^1(\mathbb S^{n-1})}
 =0.
\end{equation*}
The assumed uniform \(L^{p_0}\)-convergence at infinity also implies
\[
\lim_{R\to+\infty}\sup_{r\ge R}\| F_\theta(r)-A_f\|_{L^1}=0.
\]
Fix \(R>1\),
\[
\lim_{\alpha\to 0^+}\alpha\int_1^Rr^{-\alpha-1}\,dr=\lim_{\alpha\to 0^+}1-R^{-\alpha}=0
\]
and
\[
\lim_{\alpha\to 0^+}\alpha\int_R^\infty r^{-\alpha-1}\,dr=1,
\]
we obtain
\[
\lim_{p\to 1^+} I_p^{\infty}
=\frac1n\int_{\mathbb S^{n-1}}A_f(\theta)\,d\theta.
\]
Together with \(I_p^{\mathrm{loc}}\to0\), this proves the theorem.
\end{proof}

\subsection{Limiting behavior for centered Hardy--Littlewood maximal operator}

Recall that the centered Hardy--Littlewood maximal operator is defined by
\[
M f(x)=\sup_{r>0}|B(x,r)|^{-1}\int_{B(x,r)}|f(y)|\,dy.
\]

\begin{lemma}\label{lem:M}
If \(f\in L^1\) and
\(\supp f\subset B(0,R)\), then
\[
\lim_{|x|\to\infty}|x|^nM^{c}f(x)
=
\frac{1}{v_n}\|f\|_{L^1},
\]
where $v_n=|B(0,1)|$.
\end{lemma}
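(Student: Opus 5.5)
We prove matching upper and lower bounds for $|x|^nM^cf(x)$ as $|x|\to\infty$. The idea is that for $|x|$ large, any ball centred at $x$ that meets $\operatorname{supp} f\subset B(0,R)$ must have radius at least $|x|-R$. On the other hand, a ball of radius slightly larger than $|x|+R$ already contains the whole support.

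\emph{Upper bound.} Fix $x$ with $|x|>R$. If $r\le |x|-R$, then $B(x,r)\cap B(0,R)=\emptyset$, so the average over $B(x,r)$ vanishes. If $r>|x|-R$, then
\[
\frac1{|B(x,r)|}\int_{B(x,r)}|f|\le \frac{\|f\|_{L^1}}{v_n r^n}\le \frac{\|f\|_{L^1}}{v_n(|x|-R)^n}.
\]
Taking the supremum over $r$ gives
\[
|x|^nM^cf(x)\le \frac{\|f\|_{L^1}}{v_n}\Bigl(\frac{|x|}{|x|-R}\Bigr)^n,
\]
and the right-hand side tends to $\|f\|_{L^1}/v_n$ as $|x|\to\infty$. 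So
\[
\limsup_{|x|\to\infty}|x|^nM^cf(x)\le \frac{\|f\|_{L^1}}{v_n}.
\]

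\emph{Lower bound.} Choose $r=|x|+R$. Then $B(0,R)\subset B(x,r)$, hence
\[
M^cf(x)\ge \frac{\|f\|_{L^1}}{v_n(|x|+R)^n}.
\]
Multiplying by $|x|^n$ and letting $|x|\to\infty$ gives
\[
\liminf_{|x|\to\infty}|x|^nM^cf(x)\ge \frac{\|f\|_{L^1}}{v_n}.
\]

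Combining the two bounds proves the lemma. There is no real obstacle here; the only point requiring care is the geometric observation that small balls centred far from the origin miss the support. That observation is what makes the convergence uniform in the direction $x/|x|$, and the upper and lower bounds squeeze to the same constant.
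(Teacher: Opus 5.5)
Your proposal is correct and follows essentially the same argument as the paper. The lower bound comes from the single radius $r=|x|+R$, and the upper bound from observing that balls of radius $r\le |x|-R$ miss the support while larger radii give at most $\|f\|_{L^1}/(v_n(|x|-R)^n)$; the two bounds then squeeze to $\|f\|_{L^1}/v_n$.
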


\begin{proof}
Fix \(x\in\mathbb R^n\) with \(|x|>R\). Since
\[
\supp f\subset B(0,R)\subset B(x,|x|+R),
\]
we have
\[
M f(x)
\ge
\frac{1}{v_n(|x|+R)^n}
\int_{B(x,|x|+R)}|f(y)|\,dy
=
\frac{\|f\|_{L^{1}}}{v_n(|x|+R)^n}.
\]

On the other hand, if \(0<r\le |x|-R\), then
\(
B(x,r)\cap\supp f=\varnothing,
\)
and hence
\[
\int_{B(x,r)}|f(y)|\,dy=0.
\]
If \(r>|x|-R\), then
\[
\frac{1}{|B(x,r)|}
\int_{B(x,r)}|f(y)|\,dy
\le
\frac{\|f\|_{L^{1}}}{v_nr^n}
\le
\frac{\|f\|_{L^{1}}}{v_n(|x|-R)^n}.
\]
Taking the supremum over \(r>0\), we obtain
\[
M f(x)
\le
\frac{\|f\|_{L^{1}}}{v_n(|x|-R)^n}.
\]
Consequently,
\[
\frac{\|f\|_{L^{1}}}{v_n}
\left(\frac{|x|}{|x|+R}\right)^n
\le
|x|^nMf(x)
\le
\frac{\|f\|_{L^{1}}}{v_n}
\left(\frac{|x|}{|x|-R}\right)^n.
\]
Since
\[
\lim_{|x|\to\infty}
\left(\frac{|x|}{|x|\pm R}\right)^n=1,
\]
the desired conclusion follows.
\end{proof}

\begin{remark}
The compact support assumption on \(f\) in Lemma \ref{lem:M} is necessary.
Take \(n=1\) and define
\[
f(x)=\sum_{k=1}^{\infty}\frac{1}{2^{k}}\chi_{[2^{k},\,2^{k}+1]}(x).
\]
Clearly \(\|f\|_{L^{1}}=1\) and \(\supp f\) is unbounded.
Consider the sequence \(x_{k}=2^{k}+\frac12\). Choosing \(r=\frac12\) in the centered maximal function gives
\[
M f(x_{k})\ge\frac{1}{2\cdot \frac12}\int_{2^{k}}^{2^{k}+1}f(y)\,dy
=1\cdot\frac{1}{2^{k}}=\frac{1}{2^{k}}.
\]
Hence
\[
|x_{k}|\,M f(x_{k})\ge\Bigl(2^{k}+\frac12\Bigr)\frac{1}{2^{k}}\to 1\quad\text{as }k\to\infty.
\]
If Lemma \ref{lem:M} were true without the compact support condition, we would have
\[
\lim_{|x|\to\infty}|x|M f(x)=\frac{1}{v_{1}}\|f\|_{L^{1}(\mathbb{R})}=\frac12.
\]
However, it is just contradictory.
\end{remark}
%

\begin{theorem}
If \(f\in L^1\cap L^\infty\) has compact support, then
\[
\lim_{p\to1^+}(p-1)\|Mf\|_{L^p}^p=\|f\|_{L^{1}}.
\]
\end{theorem}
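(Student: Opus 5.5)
The plan is to apply Theorem~\ref{thm:strong:T} with $\mathbf T=M$ and $n$-dimensional polar coordinates, using Lemma~\ref{lem:M} to identify the asymptotic profile. Suppose $\supp f\subset B(0,R)$ and set $R_0=2R+1$. Lemma~\ref{lem:M} gives
\[
r^n Mf(r\theta)\longrightarrow \frac{\|f\|_{L^1}}{v_n}\qquad (r\to\infty),
\]
uniformly in $\theta$. Hence $A_f(\theta)\equiv \|f\|_{L^1}/v_n$ is constant on the sphere. The two-sided estimate in the proof of that lemma also gives, for $r\ge R_0$,
\[
\frac{\|f\|_{L^1}}{v_n}\Bigl(\frac{r}{r+R}\Bigr)^n \le r^nMf(r\theta)\le \frac{\|f\|_{L^1}}{v_n}\Bigl(\frac{r}{r-R}\Bigr)^n \le \frac{2^n\|f\|_{L^1}}{v_n}.
\]
So we may take the constant majorant $G_f:=2^n\|f\|_{L^1}/v_n$, which lies in $L^{p_0}(\mathbb S^{n-1})$ for every $p_0$. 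This verifies \eqref{eq:asymptotic-1} and \eqref{eq:asymptotic-3}.

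Next we check the remaining hypotheses of Theorem~\ref{thm:strong:T}. Since $f\in L^\infty$, we have $Mf\le \|f\|_{L^\infty}$, so $Mf\in L^{p_0}(B(0,1))$ for any $p_0>1$; one could also use $L^{p_0}$-boundedness, since $f\in L^1\cap L^\infty\subset L^{p_0}$. For the uniform convergence condition, the sandwich above gives
\[
\sup_{r\ge R'}\sup_{\theta}\Bigl|r^nMf(r\theta)-\frac{\|f\|_{L^1}}{v_n}\Bigr|
\le \frac{\|f\|_{L^1}}{v_n}\Bigl[\Bigl(\frac{R'}{R'-R}\Bigr)^n-\Bigl(\frac{R'}{R'+R}\Bigr)^n\Bigr]\longrightarrow0
\qquad(R'\to\infty).
\]
Here I use that $r\mapsto (r/(r-R))^n$ is decreasing and $r\mapsto (r/(r+R))^n$ is increasing. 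This sup-norm convergence implies the required $L^{p_0}(\mathbb S^{n-1})$ convergence.

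One technical point: Theorem~\ref{thm:strong:T} asks for the domination \eqref{eq:asymptotic-1} for $r\ge R_0$, while its proof splits at $|x|=1$. The region $1\le |x|\le R_0$ contributes at most
\[
(p-1)\|f\|_{L^\infty}^p\,|B(0,R_0)|\longrightarrow0 \qquad (p\to1^+).
\]
Beyond this, the same splitting and averaging in the proof apply with $1$ replaced by $R_0$. So either I invoke the theorem directly after noting this, or I repeat its short argument with the split at $R_0$.

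The conclusion of Theorem~\ref{thm:strong:T} then gives
\[
\lim_{p\to1^+}(p-1)\|Mf\|_{L^p}^p=\frac1n\int_{\mathbb S^{n-1}}\frac{\|f\|_{L^1}}{v_n}\,d\theta=\frac{|\mathbb S^{n-1}|}{n v_n}\|f\|_{L^1}=\|f\|_{L^1},
\]
using $|\mathbb S^{n-1}|=nv_n$. The main obstacle is the bookkeeping around $R_0$ versus $1$ in the hypotheses; it is resolved by the boundedness of $Mf$ on bounded sets.
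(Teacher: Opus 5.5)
Your proposal is correct and follows the paper's proof: identify $A_f\equiv\|f\|_{L^1}/v_n$ via Lemma~\ref{lem:M}, invoke Theorem~\ref{thm:strong:T}, and use $|\mathbb S^{n-1}|=nv_n$. You also check the hypotheses that the paper leaves implicit, and each check is sound: the constant majorant $G_f$, the bound $Mf\le\|f\|_{L^\infty}$ near the origin, the uniform convergence obtained from the sandwich estimate, and the mismatch between the split at $1$ and the threshold $R_0$.
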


\begin{proof}
By Lemma~\ref{lem:M}, we have
\[
 A_f(\theta)=\frac1{v_n}\|f\|_{L^{1}}.
\]
Therefore Theorem~\ref{thm:strong:T} gives
\[
 \lim_{p\to1^+}(p-1)\|M f\|_{L^p}^p
 =\frac1n\int_{\mathbb S^{n-1}}
 \frac{\|f\|_{L^{1}}}{v_n}\,d\theta.
\]
Since \(|\mathbb S^{n-1}|=nv_n\), the right-hand side equals
\(\|f\|_{L^{1}}\).
\end{proof}

\begin{theorem}\label{thm:weak:M}
If \(f\in L^1\cap L^\infty\) has compact support, then
\[
\begin{cases}
\displaystyle
\lim_{\lambda\to0^+}\lambda W_\gamma
\left(\left\{x:\frac{M f(x)}{|x|^{\gamma-n}}>\lambda\right\}\right)
=\dfrac{n}{\gamma}\|f\|_{L^{1}},&\gamma>0,\\[10pt]
\displaystyle
\lim_{\lambda\to\infty}\lambda W_\gamma
\left(\left\{x:\frac{M f(x)}{|x|^{\gamma-n}}>\lambda\right\}\right)
=\dfrac{n}{-\gamma}\|f\|_{L^{1}},&\gamma<0.
\end{cases}
\]
\end{theorem}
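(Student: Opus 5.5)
The plan is to derive Theorem~\ref{thm:weak:M} as a direct application of Theorem~\ref{thm:weak:T} with $\mathbf T=M$. The work is in verifying the three hypotheses of that theorem and then identifying the constant.

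Write $f\in L^1\cap L^\infty$ with $\supp f\subset B(0,R)$. The steps, in order:

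\begin{enumerate}
\item \emph{Local integrability.} Since $f\in L^\infty$, we have $Mf\le\|f\|_{L^\infty}$. Hence $Mf\in L^{p_0}(B(0,R_0))$ for every $p_0>1$ and every $R_0$.
\item \emph{The majorant \eqref{eq:asymptotic-1}.} Take $R_0=2R$ (with $R_0\ge1$). The upper bound in the proof of Lemma~\ref{lem:M} gives, for $|x|=r\ge 2R$,
\[
r^nMf(r\theta)\le \frac{\|f\|_{L^1}}{v_n}\Bigl(\frac{r}{r-R}\Bigr)^n\le \frac{2^n\|f\|_{L^1}}{v_n}.
\]
So we may take the constant function $G_f(\theta)=2^n\|f\|_{L^1}/v_n$, which is in $L^1(\mathbb S^{n-1})$.
\item \emph{The limit \eqref{eq:asymptotic-3}.} Lemma~\ref{lem:M} gives $A_f(\theta)=\|f\|_{L^1}/v_n$ for every $\theta$.
\end{enumerate}

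With these in place, Theorem~\ref{thm:weak:T} yields, for $\gamma>0$,
\[
\lim_{\lambda\to0^+}\lambda W_\gamma(\cdots)=\frac1\gamma\int_{\mathbb S^{n-1}}\frac{\|f\|_{L^1}}{v_n}\,d\theta=\frac{|\mathbb S^{n-1}|}{\gamma v_n}\|f\|_{L^1}=\frac n\gamma\|f\|_{L^1},
\]
using $|\mathbb S^{n-1}|=nv_n$. The case $\gamma<0$ is identical, with $\lambda\to\infty$ and constant $n/(-\gamma)$.

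I expect the main obstacle to be reconciling the normalizations, not the analysis. The definition of $M$ in this subsection is the centered maximal function, and Lemma~\ref{lem:M} is stated for $M^c$; these are the same object, which I will note explicitly. Everything else is a routine check. In particular, the $\gamma<0$ case of Theorem~\ref{thm:weak:T} relies only on the behavior at infinity through Theorem~\ref{thm:BSVY}(II), together with boundedness of $F_\theta$. Boundedness on all of $(0,\infty)$ needs a little care. For $r\ge R_0$ it follows from the majorant in step~2. For $r<R_0$ it follows from the $L^\infty$ bound, since $r^nMf(r\theta)\le R_0^n\|f\|_{L^\infty}$. Thus $\sup_r F_\theta(r)<\infty$, and the dominating function can be taken to be the constant $\max(G_f,R_0^n\|f\|_{L^\infty})$. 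I will point out that this is the place where the hypothesis $f\in L^\infty$ enters beyond local integrability.
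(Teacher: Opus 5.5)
Your proposal is correct and follows the paper's own route. Like the paper, you apply Theorem~\ref{thm:weak:T} with $\mathbf T=M$, read off $A_f(\theta)=\|f\|_{L^1}/v_n$ from Lemma~\ref{lem:M}, and use $|\mathbb S^{n-1}|=nv_n$; your constant majorant $G_f=2^n\|f\|_{L^1}/v_n$ for $r\ge 2R$ and the bound $r^nMf(r\theta)\le R_0^n\|f\|_{L^\infty}$ for $r<R_0$ (which is what lets Theorem~\ref{thm:BSVY}(II) apply to $F_\theta$ on all of $(0,\infty)$) are details the paper leaves implicit, so your version is slightly more complete.
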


\begin{proof}
It follows from $f\in L^1\cap L^\infty$ that $f\in L^{p_{0}}$ and $Mf\in L^{p_{0}}$, for all $1<p_{0}<\infty$. Then, applying
Lemma~\ref{lem:M} to obtain
\[
 A_f(\theta)=\frac1{v_n}\|f\|_{L^{1}}.
\]
For \(\gamma>0\), Theorem~\ref{thm:weak:T} therefore yields
\begin{align*}
 &\lim_{\lambda\to0^+}\lambda W_\gamma
 \left(\left\{x\in\mathbb R^n:
 \frac{M f(x)}{|x|^{\gamma-n}}>\lambda\right\}\right)\\
 &=\frac1\gamma\int_{\mathbb S^{n-1}}A_f(\theta)\,d\theta=\frac{|\mathbb S^{n-1}|}{\gamma v_n}\|f\|_{L^{1}}
 =\frac n\gamma\|f\|_{L^{1}}.
\end{align*}
For \(\gamma<0\), the same computation is taken as
\(\lambda\to\infty\) and gives \(n\|f\|_{L^{1}}/(-\gamma)\).
\end{proof}

\begin{remark}
The condition \(f \in L^1 \cap L^\infty\) in the estimate
\[
\lim_{p\to 1^+}(p-1)\|M f\|_{L^p}^p =\|f\|_{L^1}
\]
cannot be relaxed to \(f \in L^1\). In fact, there exists \(f \in L^1\) such that \(\|M f\|_{L^p} = \infty\) for every \(p>1\), making the left-hand side infinite while the right-hand side remains finite.

A standard counterexample is given by
\[
f(x)=\frac{1}{x(\log(1/x))^2}\,\mathbf{1}_{(0,1/2)}(x),\qquad x\in\mathbb{R}.
\]
Clearly \(f \in L^1(\mathbb{R})\) because
\[
\int_0^{1/2}\frac{dx}{x(\log(1/x))^2} = \int_{\log 2}^{\infty} \frac{1}{u^2} \, du < \infty.
\]
Since \(f\) is nonnegative and decreasing on \((0,1/2)\), its Hardy--Littlewood maximal function satisfies
\[
M  f(x) \ge \frac{1}{x}\int_0^x f(t)\,dt = \frac{1}{x\log(1/x)},\qquad 0<x<\frac12.
\]
For any \(p>1\), we have
\[
\int_0^{1/2} \bigl(M f(x)\bigr)^p\,dx
\ge \int_0^{1/2} \frac{dx}{x^p(\log(1/x))^p}
= \int_{\log 2}^\infty \frac{e^{(p-1)u}}{u^p}\,du = \infty.
\]
Hence \(\|M f\|_{L^p(\mathbb{R})}^p = \infty\) for all \(p>1\), and consequently
\[
\lim_{p\to 1^+}(p-1)\|M f\|_{L^p(\mathbb{R})}^p = \infty,
\]
which violates the desired inequality. This illustrates the necessity of the additional \(L^\infty\) assumption in the endpoint estimate.
\end{remark}

\begin{remark}
It should be noted that Janakiraman \cite{Janakiraman2005} established
Theorem~\ref{thm:weak:M} for the case $\gamma=n$, with requiring
$f\in L^1$ only. While additional assumptions are imposed on $f$ in the present
work, we not only obtain a weighted version in a more general setting, but
also reveal that the essential reason for the equality to hold is that
\begin{align*}
 \lim_{\lambda\to0^+}\lambda W_\gamma
 \left(\left\{x\in\mathbb R^n:
 \frac{M f(x)}{|x|^{\gamma-n}}>\lambda\right\}\right)
 &=\lim_{t\rightarrow +\infty}\frac1\gamma\int_{\mathbb S^{n-1}}|t|^{n}M (f)(t\theta)\,d\theta.
\end{align*}
\end{remark}

\subsection{Limiting behavior for non-centered Hardy--Littlewood maximal operator}

The non-centered Hardy--Littlewood maximal operator is defined by
\[
M^{'} f(x)
=
\sup_{B\ni x}\frac{1}{|B|}\int_B|f(y)|\,dy,
\]
where the supremum is taken over all Euclidean balls containing \(x\).

\begin{lemma}\label{lem:M-non-centered}
If \(f\in L^1\) and
\(\supp f\subset B(0,R)\), then
\[
\lim_{|x|\to\infty}|x|^n M^{'} f(x)
=
\frac{2^n}{v_n}\|f\|_{L^1}.
\]
\end{lemma}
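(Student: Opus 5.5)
We follow the proof of Lemma~\ref{lem:M}: a two-sided estimate of $|x|^nM'f(x)$ for $|x|>R$, with both bounds tending to $2^n\|f\|_{L^1}/v_n$. The new feature is that the optimal ball need not be centered at $x$. Heuristically, a ball containing both $x$ and the support of $f$, which sits near the origin, should have diameter about $|x|$, hence radius about $|x|/2$. This is where the factor $2^n$ comes from.

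\textbf{Lower bound.} Fix $|x|>R$ and take the ball $B$ centered at $\tfrac{x}{2}$ with radius $\rho=\tfrac{|x|}{2}+R$. It contains $x$, since $|x-\tfrac x2|=\tfrac{|x|}{2}<\rho$. It also contains $B(0,R)$, since for $|y|<R$ we have $|y-\tfrac x2|<R+\tfrac{|x|}{2}=\rho$. Therefore
\[
M'f(x)\ge \frac{\|f\|_{L^1}}{v_n(\tfrac{|x|}{2}+R)^n},
\]
and so
\[
|x|^nM'f(x)\ge \frac{2^n\|f\|_{L^1}}{v_n}\Bigl(\frac{|x|}{|x|+2R}\Bigr)^n\longrightarrow \frac{2^n}{v_n}\|f\|_{L^1}.
\]

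\textbf{Upper bound.} Let $B=B(c,r)$ be any ball with $x\in B$. If $B\cap B(0,R)=\varnothing$, then $\int_B|f|=0$. Otherwise pick a point $y\in B\cap B(0,R)$. Both $x$ and $y$ lie in $B$, so $2r>|x-y|\ge |x|-R$. Hence
\[
\frac1{|B|}\int_B|f|\le \frac{\|f\|_{L^1}}{v_nr^n}\le \frac{2^n\|f\|_{L^1}}{v_n(|x|-R)^n}.
\]
Taking the supremum over all such balls gives
\[
|x|^nM'f(x)\le \frac{2^n\|f\|_{L^1}}{v_n}\Bigl(\frac{|x|}{|x|-R}\Bigr)^n.
\]

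\textbf{Conclusion.} The two bounds squeeze $|x|^nM'f(x)$, and letting $|x|\to\infty$ gives the stated limit $2^n\|f\|_{L^1}/v_n$.

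The only nontrivial step is finding the near-optimal ball in the lower bound, namely centering it at $x/2$ rather than at $x$. The diameter observation $2r\ge|x|-R$ in the upper bound is routine. There is a minor open/closed ball issue (whether $x$ and the support lie in the open ball), which is harmless because the radius can be enlarged by an arbitrarily small amount, and this does not affect the limit.
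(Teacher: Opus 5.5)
Your proof is correct and follows the paper's argument. The upper bound (any ball meeting the support has $2r>|x|-R$) is identical, and the lower bound likewise uses a single ball containing both $x$ and $B(0,R)$. The only difference is the choice of ball, $B(x/2,\,|x|/2+R)$ versus the paper's $B\bigl(\frac{|x|-R}{2|x|}x,\frac{|x|+R}{2}\bigr)$. The paper's ball gives the slightly tighter factor $(|x|+R)^n$, but it places $x$ exactly on its boundary. Yours contains $x$ strictly, so it avoids the open/closed-ball issue you flagged.
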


\begin{proof}
Fix \(x\in\mathbb R^n\) with \(|x|>R\). Set
\[
z_x=\frac{|x|-R}{2|x|}x,
\qquad
r_x=\frac{|x|+R}{2}.
\]
Then \(x\in B(z_x,r_x)\) and
\[
B(0,R)\subset B(z_x,r_x).
\]
Therefore,
\[
M^{'} f(x)
\ge
\frac{1}{v_nr_x^n}
\int_{B(z_x,r_x)}|f(y)|\,dy
=
\frac{2^n\|f\|_{L^1}}
{v_n(|x|+R)^n}.
\]

For the reverse inequality, let \(B(z,r)\) be any ball containing \(x\).
If
\[
B(z,r)\cap\supp f=\varnothing,
\]
then the corresponding average is zero. Otherwise, there exists
\(y\in B(z,r)\cap B(0,R)\). Since \(x,y\in B(z,r)\), we have
\[
|x|-R
\le |x-y|
<2r.
\]
Thus
\[
r>\frac{|x|-R}{2},
\]
and consequently
\[
\frac{1}{|B(z,r)|}\int_{B(z,r)}|f(y)|\,dy
\le
\frac{\|f\|_{L^1}}{v_nr^n}
\le
\frac{2^n\|f\|_{L^1}}
{v_n(|x|-R)^n}.
\]
Taking the supremum over all balls containing \(x\), we obtain
\[
M^{'} f(x)
\le
\frac{2^n\|f\|_{L^1}}
{v_n(|x|-R)^n}.
\]
Hence
\[
\frac{2^n}{v_n}\|f\|_{L^1}
\left(\frac{|x|}{|x|+R}\right)^n
\le
|x|^nM^{'}f(x)
\le
\frac{2^n}{v_n}\|f\|_{L^1}
\left(\frac{|x|}{|x|-R}\right)^n.
\]
Letting \(|x|\to\infty\) and applying the squeeze theorem proves the
desired conclusion.
\end{proof}

\begin{theorem}
If \(f\in L^1\cap L^\infty \) has compact
support, then
\[
\lim_{p\to1^+}(p-1)
\|M^{'} f\|_{L^p }^p
=
2^n\|f\|_{L^1}.
\]
\end{theorem}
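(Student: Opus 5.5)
The plan is to mirror the centered case and deduce the identity from Theorem~\ref{thm:strong:T} with $\mathbf T=M^{'}$, using Lemma~\ref{lem:M-non-centered} as the source of the asymptotic profile. From Lemma~\ref{lem:M-non-centered} we get, for every $\theta\in\mathbb S^{n-1}$,
\[
A_f(\theta)=\lim_{r\to\infty}r^nM^{'}f(r\theta)=\frac{2^n}{v_n}\|f\|_{L^1},
\]
a constant independent of $\theta$. Granting that the hypotheses of Theorem~\ref{thm:strong:T} hold, its conclusion gives
\[
\lim_{p\to1^+}(p-1)\|M^{'}f\|_{L^p}^p=\frac1n\int_{\mathbb S^{n-1}}\frac{2^n}{v_n}\|f\|_{L^1}\,d\theta=\frac{2^n|\mathbb S^{n-1}|}{nv_n}\|f\|_{L^1}=2^n\|f\|_{L^1},
\]
since $|\mathbb S^{n-1}|=nv_n$.

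It remains to verify the hypotheses of Theorem~\ref{thm:strong:T}, and this is where the real work lies.

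\textbf{Local integrability.} Since $f\in L^1\cap L^\infty$, we have $f\in L^{p_0}$ for any $p_0\in(1,\infty)$. The non-centered maximal operator is pointwise comparable to the centered one ($M^{'}f\le 2^nMf$), so it is bounded on $L^{p_0}$. Hence $M^{'}f\in L^{p_0}(B(0,1))$; indeed $M^{'}f\le\|f\|_{L^\infty}$ everywhere.

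\textbf{Domination \eqref{eq:asymptotic-1}.} The two-sided estimate in the proof of Lemma~\ref{lem:M-non-centered} gives, for $|x|=r\ge R_0:=\max(2R,1)$,
\[
r^nM^{'}f(r\theta)\le\frac{2^n\|f\|_{L^1}}{v_n}\Bigl(\frac{r}{r-R}\Bigr)^n\le\frac{4^n\|f\|_{L^1}}{v_n}.
\]
So we may take $G_f$ to be this constant, which lies in every $L^{p_0}(\mathbb S^{n-1})$. For $1\le r<R_0$ we also have $r^nM^{'}f(r\theta)\le R_0^n\|f\|_{L^\infty}$, so the profile is bounded on all of $r\ge1$, which is what the proof of Theorem~\ref{thm:strong:T} actually uses.

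\textbf{Uniform convergence at infinity.} The same two-sided bound yields
\[
\sup_{\theta}\bigl|r^nM^{'}f(r\theta)-A_f\bigr|\le\frac{2^n\|f\|_{L^1}}{v_n}\Bigl[\Bigl(\frac{r}{r-R}\Bigr)^n-\Bigl(\frac{r}{r+R}\Bigr)^n\Bigr],
\]
and the right-hand side tends to $0$ as $r\to\infty$. Therefore $\sup_{r\ge R}\|r^nM^{'}f(r\,\cdot)-A_f\|_{L^{p_0}(\mathbb S^{n-1})}\to0$ as $R\to\infty$.

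The main obstacle I expect is exactly this uniformity requirement. Pointwise convergence in $\theta$ alone would not suffice for Theorem~\ref{thm:strong:T}, so I will extract the explicit squeeze bounds from Lemma~\ref{lem:M-non-centered} rather than only its limit. Those bounds are uniform in direction because they depend only on $|x|$, and this is what makes the uniform $L^{p_0}$ convergence go through.
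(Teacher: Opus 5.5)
Your proposal is correct and follows the paper's proof: you read off the constant profile $A_f=\frac{2^n}{v_n}\|f\|_{L^1}$ from Lemma~\ref{lem:M-non-centered}, apply Theorem~\ref{thm:strong:T}, and use $|\mathbb S^{n-1}|=nv_n$. You also verify that theorem's hypotheses explicitly, which the paper leaves implicit: the constant majorant for all $r\ge1$, and the $\theta$-uniform convergence obtained from the two-sided squeeze bounds in the lemma.
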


\begin{proof}
By Lemma~\ref{lem:M-non-centered}, the directional profile is constant:
\[
A_f(\theta)
=
\frac{2^n}{v_n}\|f\|_{L^1},
\qquad
\theta\in\mathbb S^{n-1}.
\]
Applying Theorem~\ref{thm:strong:T}, we obtain
\begin{align*}
\lim_{p\to1^+}(p-1)
\|M^{'} f\|_{L^p }^p
&=
\frac1n
\int_{\mathbb S^{n-1}}A_f(\theta)\,d\theta=
\frac{2^n|\mathbb S^{n-1}|}{nv_n}
\|f\|_{L^1}.
\end{align*}
Since \(|\mathbb S^{n-1}|=nv_n\), the right-hand side equals
\(
2^n\|f\|_{L^1}.
\)
\end{proof}

\begin{theorem}
If \(f\in L^1\cap L^\infty \) has compact
support, then
\[
\begin{cases}
\displaystyle
\lim_{\lambda\to0^+}\lambda W_\gamma
\left(
\left\{
x\in\mathbb R^n:
\frac{M^{'} f(x)}{|x|^{\gamma-n}}>\lambda
\right\}
\right)
=
\frac{n2^n}{\gamma}
\|f\|_{L^1},
&\gamma>0,\\[12pt]
\displaystyle
\lim_{\lambda\to\infty}\lambda W_\gamma
\left(
\left\{
x\in\mathbb R^n:
\frac{M^{'} f(x)}{|x|^{\gamma-n}}>\lambda
\right\}
\right)
=
\frac{n2^n}{-\gamma}
\|f\|_{L^1},
&\gamma<0.
\end{cases}
\]
\end{theorem}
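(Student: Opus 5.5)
The plan mirrors the proof of Theorem~\ref{thm:weak:M}: we verify the hypotheses of Theorem~\ref{thm:weak:T} for $\mathbf T=M^{'}$, with the constant profile supplied by Lemma~\ref{lem:M-non-centered}, and then evaluate the spherical integral. Fix $f\in L^1\cap L^\infty$ with $\supp f\subset B(0,R)$.

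\textbf{Local integrability.} Since $f\in L^{p_0}$ for every $1<p_0<\infty$, the $L^{p_0}$-boundedness of the non-centered maximal operator gives $M^{'}f\in L^{p_0}$. In particular $M^{'}f\in L^{p_0}(B(0,R_0))$ for any $R_0\ge1$, which is the local condition required in Theorem~\ref{thm:weak:T}.

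\textbf{Asymptotic profile.} Lemma~\ref{lem:M-non-centered} gives, for every $\theta\in\mathbb S^{n-1}$,
\[
A_f(\theta)=\lim_{r\to\infty}r^nM^{'}f(r\theta)=\frac{2^n}{v_n}\|f\|_{L^1},
\]
so \eqref{eq:asymptotic-3} holds everywhere. For the majorant \eqref{eq:asymptotic-1}, take $R_0=\max\{1,2R\}$. The upper bound in the proof of that lemma gives, for $|x|=r\ge R_0$,
\[
r^nM^{'}f(r\theta)\le\frac{2^n}{v_n}\|f\|_{L^1}\Bigl(\frac{r}{r-R}\Bigr)^n\le\frac{4^n}{v_n}\|f\|_{L^1}.
\]
Hence $G_f(\theta):=4^nv_n^{-1}\|f\|_{L^1}$ works; it is a constant and so lies in $L^1(\mathbb S^{n-1})$.

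\textbf{Conclusion.} Theorem~\ref{thm:weak:T} now yields, for $\gamma>0$,
\[
\lim_{\lambda\to0^+}\lambda W_\gamma\Bigl(\Bigl\{x:\frac{M^{'}f(x)}{|x|^{\gamma-n}}>\lambda\Bigr\}\Bigr)=\frac1\gamma\int_{\mathbb S^{n-1}}\frac{2^n}{v_n}\|f\|_{L^1}\,d\theta=\frac{2^n|\mathbb S^{n-1}|}{\gamma v_n}\|f\|_{L^1}=\frac{n2^n}{\gamma}\|f\|_{L^1},
\]
using $|\mathbb S^{n-1}|=nv_n$. For $\gamma<0$ the same computation, with the limit taken as $\lambda\to\infty$, gives $n2^n\|f\|_{L^1}/(-\gamma)$.

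No step is a genuine obstacle. The only point needing care is making the majorant $G_f$ explicit and uniform in $\theta$ for $r\ge R_0$, and this comes directly from the quantitative two-sided bound already established in Lemma~\ref{lem:M-non-centered}.
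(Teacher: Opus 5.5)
Your proposal is correct and takes essentially the same route as the paper: read off the constant profile $A_f(\theta)=2^nv_n^{-1}\|f\|_{L^1}$ from Lemma~\ref{lem:M-non-centered}, apply Theorem~\ref{thm:weak:T}, and use $|\mathbb S^{n-1}|=nv_n$. You are more careful than the paper in checking the local $L^{p_0}$ condition and in exhibiting the majorant $G_f=4^nv_n^{-1}\|f\|_{L^1}$ for $r\ge R_0=\max\{1,2R\}$; note also that $M^{'}f\le\|f\|_{L^\infty}$ keeps $r^nM^{'}f(r\theta)$ bounded for $r<R_0$, which is what the proof of Theorem~\ref{thm:weak:T} via Theorem~\ref{thm:BSVY} actually uses.
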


\begin{proof}
Lemma~\ref{lem:M-non-centered} gives
\[
A_f(\theta)
=
\frac{2^n}{v_n}\|f\|_{L^1}.
\]
For \(\gamma>0\), Theorem~\ref{thm:weak:T} yields
\begin{align*}
&\lim_{\lambda\to0^+}\lambda W_\gamma
\left(
\left\{
x\in\mathbb R^n:
\frac{M^{'} f(x)}{|x|^{\gamma-n}}>\lambda
\right\}
\right)\\
&\qquad
=
\frac1\gamma
\int_{\mathbb S^{n-1}}A_f(\theta)\,d\theta
=
\frac{2^n|\mathbb S^{n-1}|}{\gamma v_n}
\|f\|_{L^1}
=
\frac{n2^n}{\gamma}
\|f\|_{L^1}.
\end{align*}
For \(\gamma<0\), the same computation with
\(\lambda\to\infty\) gives
\(
\frac{n2^n}{-\gamma}
\|f\|_{L^1}.
\)
\end{proof}

\subsection{Limiting behavior for homogeneous Calder\'on--Zygmund operators}

Let
\[
T_\Omega f(x)
=
\operatorname{p.v.}
\int_{\mathbb R^n}
\frac{
\Omega\bigl((x-y)/|x-y|\bigr)
}{
|x-y|^n
}
f(y)\,dy,
\]
where \(\Omega\in C^1(\mathbb S^{n-1})\) satisfies
\[
\int_{\mathbb S^{n-1}}\Omega(\theta)\,d\theta=0.
\]

\begin{lemma}\label{lem:CZ-asymptotic}
Let \(f\in L^1\) have compact support and non-negative. Then
\[
\lim_{r\to\infty}
\sup_{\theta\in\mathbb S^{n-1}}
\Big|
r^n|T_\Omega f(r\theta)|
-
|\Omega(\theta)|\|f\|_{L^{1}}
\Big|
=
0.
\]
In particular,
\[
\lim_{r\to\infty}
r^n|T_\Omega f(r\theta)|
=
|\Omega(\theta)|\|f\|_{L^{1}}
\]
uniformly in \(\theta\in\mathbb S^{n-1}\).
\end{lemma}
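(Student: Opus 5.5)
Let \(\supp f\subset B(0,R)\) and take \(r=|x|>2R\). Then \(x\notin\supp f\), so the principal value is not needed and
\[
T_\Omega f(x)=\int_{B(0,R)}\frac{\Omega\bigl((x-y)/|x-y|\bigr)}{|x-y|^n}f(y)\,dy .
\]
The plan is to compare this with the frozen kernel \(\Omega(\theta)r^{-n}\), where \(\theta=x/|x|\). We write
\[
r^nT_\Omega f(r\theta)-\Omega(\theta)\|f\|_{L^1}
=\int_{B(0,R)}\Bigl(\frac{r^n\,\Omega\bigl((x-y)/|x-y|\bigr)}{|x-y|^n}-\Omega(\theta)\Bigr)f(y)\,dy .
\]
Here \(f\ge0\) is used to identify \(\int f=\|f\|_{L^1}\).

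The integrand difference splits into two pieces, each bounded uniformly in \(\theta\) and \(|y|<R\).

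\textbf{First piece.} Since \(|x-y|\) lies between \(r-R\) and \(r+R\), we get
\[
\Bigl|\frac{r^n}{|x-y|^n}-1\Bigr|\le\Bigl(\frac{r}{r-R}\Bigr)^n-1\to0 ,
\]
and this is multiplied by \(\|\Omega\|_{L^\infty}\).

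\textbf{Second piece.} We need to control \(|\Omega((x-y)/|x-y|)-\Omega(\theta)|\). Since \(\Omega\in C^1(\mathbb S^{n-1})\), it is Lipschitz on the sphere with constant \(\|\nabla\Omega\|_\infty\). The elementary estimate
\[
\Bigl|\frac{x-y}{|x-y|}-\frac{x}{|x|}\Bigr|\le\frac{2|y|}{|x|}\le\frac{2R}{r}
\]
then bounds this piece by \(2R\|\nabla\Omega\|_\infty/r\), multiplied by \(r^n/|x-y|^n\le (r/(r-R))^n\le 2^n\).

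Integrating against \(f\ge0\) gives
\[
\sup_\theta\bigl|r^nT_\Omega f(r\theta)-\Omega(\theta)\|f\|_{L^1}\bigr|\le C(n,\Omega,R)\,r^{-1}\|f\|_{L^1}\to0 .
\]
Finally, the reverse triangle inequality \(\bigl||a|-|b|\bigr|\le|a-b|\) passes this to absolute values, which yields both displayed conclusions.

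The only step requiring care is the uniform angular estimate, i.e.\ the Lipschitz bound for the normalization map \(z\mapsto z/|z|\) away from the origin. This is standard: for \(|z|,|w|\ge c\),
\[
\Bigl|\frac{z}{|z|}-\frac{w}{|w|}\Bigr|\le\frac{2|z-w|}{\max(|z|,|w|)} .
\]
Applying it with \(z=x-y\), \(w=x\) gives the bound above. Nonnegativity of \(f\) is used only to replace \(\int f\) by \(\|f\|_{L^1}\); for signed \(f\) the limit would instead be \(|\Omega(\theta)|\,|\int f|\).
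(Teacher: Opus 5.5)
Your proof is correct and follows the paper's route: for $r>2R$ you freeze the kernel at $\Omega(\theta)r^{-n}$ and bound the difference uniformly in $\theta$ and $|y|<R$. The only difference is that you use the $C^1$ (Lipschitz) regularity of $\Omega$ to get an explicit $O(r^{-1})$ rate, whereas the paper uses only uniform continuity of $\Omega$, which gives no rate but would also cover merely continuous $\Omega$.
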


\begin{proof}
Suppose that \(\supp f\subset B(0,R)\). For \(r>2R\),
\[
r^nT_\Omega f(r\theta)
=
\int_{\mathbb R^n}
\frac{r^n}{|r\theta-y|^n}
\Omega\left(
\frac{r\theta-y}{|r\theta-y|}
\right)
f(y)\,dy.
\]
Uniformly for
\(\theta\in\mathbb S^{n-1}\) and \(y\in B(0,R)\),
\[
\lim_{r\to +\infty}\frac{r^n}{|r\theta-y|^n}=1
\qquad
\text{and}
\qquad
\lim_{r\to +\infty}\frac{r\theta-y}{|r\theta-y|}=\theta
\]
Since \(\Omega\) is uniformly continuous on
\(\mathbb S^{n-1}\), it follows that
\[
\frac{r^n}{|r\theta-y|^n}
\Omega\left(
\frac{r\theta-y}{|r\theta-y|}
\right)
\longrightarrow
\Omega(\theta)
\]
uniformly in \(\theta\in\mathbb S^{n-1}\) and \(y\in\supp f\).

Moreover, for \(r>2R\),
\[
|r\theta-y|\ge r-R\ge\frac r2,
\]
and hence the integrand is bounded by
\(
2^n\|\Omega\|_{L^\infty(\mathbb S^{n-1})}|f(y)|.
\)
The conclusion follows from the dominated convergence theorem.
\end{proof}

\begin{theorem}\label{thm:strong:CZ}
Let \(f\in L^1\cap L^\infty \) be compactly
supported and non-negative. Then
\[
\lim_{p\to1^+}(p-1)
\|T_\Omega f\|_{L^p }^p
=
\frac{\|f\|_{L^1}}{n}
\int_{\mathbb S^{n-1}}|\Omega(\theta)|\,d\theta.
\]
\end{theorem}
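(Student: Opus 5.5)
The plan is to apply Theorem~\ref{thm:strong:T} with $\mathbf T=T_\Omega$, so we must check its hypotheses and identify $A_f$. By Lemma~\ref{lem:CZ-asymptotic}, $A_f(\theta)=\lim_{r\to\infty}r^n|T_\Omega f(r\theta)|=|\Omega(\theta)|\,\|f\|_{L^1}$. This limit is uniform in $\theta$, so
\[
\lim_{R\to\infty}\sup_{r\ge R}\bigl\|r^n|T_\Omega f(r\,\cdot)|-A_f\bigr\|_{L^{p_0}(\mathbb S^{n-1})}
\le |\mathbb S^{n-1}|^{1/p_0}\lim_{R\to\infty}\sup_{r\ge R}\sup_{\theta}\bigl|r^n|T_\Omega f(r\theta)|-A_f(\theta)\bigr|=0
\]
for every $p_0$. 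Once the hypotheses are verified, the conclusion reads $\frac1n\int_{\mathbb S^{n-1}}|\Omega|\,d\theta\,\|f\|_{L^1}$, as claimed.

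The local hypothesis $T_\Omega f\in L^{p_0}(B(0,1))$ follows from $f\in L^1\cap L^\infty\subset L^{p_0}$ for any $1<p_0<\infty$. Since $\Omega\in C^1(\mathbb S^{n-1})$ has mean zero, $T_\Omega$ is a Calder\'on--Zygmund operator bounded on $L^{p_0}$. Hence $T_\Omega f\in L^{p_0}(\mathbb R^n)$, and in particular on the unit ball.

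For the majorant \eqref{eq:asymptotic-1}, I would take $R_0=\max(1,2R)$, where $\supp f\subset B(0,R)$. The proof of Lemma~\ref{lem:CZ-asymptotic} shows that for $r>2R$ the integrand is bounded by $2^n\|\Omega\|_{L^\infty}|f(y)|$, so $r^n|T_\Omega f(r\theta)|\le 2^n\|\Omega\|_{L^\infty}\|f\|_{L^1}$. We may therefore take $G_f$ to be this constant, which lies in $L^{p_0}(\mathbb S^{n-1})\subset L^1(\mathbb S^{n-1})$. For $r>2R$ the point $r\theta$ is away from the support, so no principal value is needed there.

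The only delicate point is that Theorem~\ref{thm:strong:T} is stated with the split at $|x|=1$ and the bound required for $r\ge R_0$. If $R_0>1$, the annulus $1\le|x|\le R_0$ must be handled separately. I would treat it like the local term: $(p-1)\int_{B(0,R_0)}|T_\Omega f|^p\to0$ by H\"older's inequality and the global $L^{p_0}$ bound. Equivalently, one can rescale $f$ (dilation $f(\cdot\, R_0)$, which commutes with $T_\Omega$ up to the usual scaling and multiplies $\|f\|_{L^1}$ appropriately) to reduce to the case $R_0\le1$. I expect this bookkeeping, rather than any analytic estimate, to be the main thing to write carefully. Nonnegativity of $f$ is used only through Lemma~\ref{lem:CZ-asymptotic}, where $\int f=\|f\|_{L^1}$.
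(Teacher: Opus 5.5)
Your proposal is correct and follows the paper's route. The paper's proof notes $T_\Omega f\in L^{p_0}$ by Calder\'on--Zygmund theory, reads off $A_f(\theta)=|\Omega(\theta)|\,\|f\|_{L^1}$ from Lemma~\ref{lem:CZ-asymptotic}, and invokes Theorem~\ref{thm:strong:T}. Your extra checks supply details the paper leaves implicit: the constant majorant $G_f=2^n\|\Omega\|_{L^\infty}\|f\|_{L^1}$, the uniform-in-$\theta$ convergence giving the $L^{p_0}$ tail condition, and the separate H\"older treatment of the annulus $1\le|x|\le R_0$ (which the proof of Theorem~\ref{thm:strong:T} tacitly skips by using the majorant for all $r\ge1$).
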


\begin{proof}
Since \(f\in L^1\cap L^\infty \) has compact
support, it belongs to \(L^{p_0}(\mathbb R^n)\) for every \(p_0>1\).
The standard Calder\'on--Zygmund estimates therefore imply that $T_\Omega f\in L^{p_0}$.
By Lemma~\ref{lem:CZ-asymptotic} and the non-negativity of \(f\), we have
\[
A_f(\theta)
=
|\Omega(\theta)|\|f\|_{L^1}.
\]
Theorem~\ref{thm:strong:T} consequently gives
\begin{align*}
\lim_{p\to1^+}(p-1)
\|T_\Omega f\|_{L^p }^p
&=
\frac1n
\int_{\mathbb S^{n-1}}A_f(\theta)\,d\theta
=
\frac{\|f\|_{L^1}}{n}
\int_{\mathbb S^{n-1}}|\Omega(\theta)|\,d\theta.
\end{align*}
\end{proof}

\begin{theorem}\label{thm:weak:CZ}
Under the assumptions of Theorem~\ref{thm:strong:CZ},
\[
\begin{cases}
\displaystyle
\lim_{\lambda\to0^+}\lambda W_\gamma
\left(
\left\{
x\in\mathbb R^n:
\frac{|T_\Omega f(x)|}{|x|^{\gamma-n}}>\lambda
\right\}
\right)
=
\frac{\|f\|_{L^1}}{\gamma}
\int_{\mathbb S^{n-1}}|\Omega(\theta)|\,d\theta,
&\gamma>0,\\[12pt]
\displaystyle
\lim_{\lambda\to\infty}\lambda W_\gamma
\left(
\left\{
x\in\mathbb R^n:
\frac{|T_\Omega f(x)|}{|x|^{\gamma-n}}>\lambda
\right\}
\right)
=
\frac{\|f\|_{L^1}}{-\gamma}
\int_{\mathbb S^{n-1}}|\Omega(\theta)|\,d\theta,
&\gamma<0.
\end{cases}
\]
\end{theorem}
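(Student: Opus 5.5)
The plan is to deduce Theorem~\ref{thm:weak:CZ} from Theorem~\ref{thm:weak:T} with $\mathbf T=T_\Omega$, in the same way Theorem~\ref{thm:weak:M} was obtained from Lemma~\ref{lem:M}. This requires checking three hypotheses of Theorem~\ref{thm:weak:T}: local integrability $T_\Omega f\in L^{p_0}(B(0,R_0))$, the radial majorant \eqref{eq:asymptotic-1} with some $G_f\in L^1(\mathbb S^{n-1})$, and the existence of the profile $A_f$ in \eqref{eq:asymptotic-3}. Once these hold, the conclusion of Theorem~\ref{thm:weak:T} reads $\frac{1}{|\gamma|}\int_{\mathbb S^{n-1}}A_f(\theta)\,d\theta$ in the two regimes $\lambda\to0^+$ ($\gamma>0$) and $\lambda\to\infty$ ($\gamma<0$). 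Lemma~\ref{lem:CZ-asymptotic} identifies $A_f(\theta)=|\Omega(\theta)|\|f\|_{L^1}$ for every $\theta$, because $f\ge0$ gives $\int f=\|f\|_{L^1}$. Integrating over the sphere then yields exactly the stated constant $\frac{\|f\|_{L^1}}{|\gamma|}\int_{\mathbb S^{n-1}}|\Omega|$.

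For local integrability, $f\in L^1\cap L^\infty$ with compact support lies in every $L^{p_0}$, $1<p_0<\infty$. The Calder\'on--Zygmund theory for $\Omega\in C^1(\mathbb S^{n-1})$ with mean zero gives $T_\Omega f\in L^{p_0}(\mathbb R^n)$, and in particular on any ball. This is the same observation already used in the proof of Theorem~\ref{thm:strong:CZ}.

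For the majorant, suppose $\supp f\subset B(0,R)$ and take $R_0=2R$ (or $R_0=\max(1,2R)$, so that $R_0\ge1$). For $r\ge R_0$ and $y\in B(0,R)$ we have $|r\theta-y|\ge r/2$. The integral defining $T_\Omega f(r\theta)$ is then non-singular, and
\[
r^n|T_\Omega f(r\theta)|\le 2^n\|\Omega\|_{L^\infty(\mathbb S^{n-1})}\|f\|_{L^1}=:G_f(\theta).
\]
This $G_f$ is a constant, hence lies in $L^1(\mathbb S^{n-1})$, and the bound holds for every $\theta$, which is stronger than the a.e.\ requirement. The limit \eqref{eq:asymptotic-3} exists for every $\theta$ by Lemma~\ref{lem:CZ-asymptotic}, in fact uniformly in $\theta$.

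I expect no serious obstacle; the only delicate points are bookkeeping. First, Theorem~\ref{thm:weak:T} requires $R_0\ge1$, and its proof applies Theorem~\ref{thm:BSVY}(II) to $F_\theta(r)=r^n|T_\Omega f(r\theta)|$, which needs $\sup_{r>0}F_\theta(r)<\infty$, not only $\sup_{r\ge R_0}$. On $r<R_0$, $F_\theta$ need not be bounded pointwise, since $T_\Omega f$ is only in $L^{p_0}$ near the origin. This gap lies inside Theorem~\ref{thm:weak:T} itself rather than in the present application; I would simply invoke that theorem as stated. If more care is wanted, I would note that the contribution of $\{|x|<R_0\}$ to the weak quantity vanishes in the relevant limit of $\lambda$ by the $L^{p_0}$ bound and Chebyshev's inequality. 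Second, the non-negativity of $f$ is used only to turn $|\int f|$ into $\|f\|_{L^1}$ in the profile.
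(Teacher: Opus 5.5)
Your proposal is correct and follows the paper's argument: the paper also substitutes $A_f(\theta)=|\Omega(\theta)|\,\|f\|_{L^1}$ from Lemma~\ref{lem:CZ-asymptotic} into Theorem~\ref{thm:weak:T}, leaving implicit the local $L^{p_0}$ and majorant checks you supply. Your constant majorant $G_f=2^n\|\Omega\|_{L^\infty(\mathbb S^{n-1})}\|f\|_{L^1}$ for $r\ge\max(1,2R)$ is right. Your caveat is also well founded: the paper's proof of Theorem~\ref{thm:weak:T} invokes Theorem~\ref{thm:BSVY} without knowing $\sup_{r>0}F_\theta(r)<\infty$, and this can fail because $T_\Omega f$ may have logarithmic singularities. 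Your repair works: for $\gamma>0$ the ball $B(0,R_0)$ contributes at most $\lambda W_\gamma(B(0,R_0))\to0$, while for $\gamma<0$ Chebyshev bounds its contribution by $\lambda^{1-p_0}\int_{B(0,R_0)}|T_\Omega f|^{p_0}|x|^{(n-\gamma)(p_0-1)}\,dx\to0$. The remaining part is then Theorem~\ref{thm:BSVY}(II) applied to $F_\theta\mathbf 1_{[R_0,\infty)}$.
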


\begin{proof}
Lemma~\ref{lem:CZ-asymptotic} gives
\[
A_f(\theta)
=
|\Omega(\theta)|\|f\|_{L^1}.
\]
For \(\gamma>0\), Theorem~\ref{thm:weak:T} yields
\begin{align*}
&\lim_{\lambda\to0^+}\lambda W_\gamma
\left(
\left\{
x\in\mathbb R^n:
\frac{|T_\Omega f(x)|}{|x|^{\gamma-n}}>\lambda
\right\}
\right)\\
&\qquad
=
\frac1\gamma
\int_{\mathbb S^{n-1}}A_f(\theta)\,d\theta
=
\frac{\|f\|_{L^1}}{\gamma}
\int_{\mathbb S^{n-1}}|\Omega(\theta)|\,d\theta.
\end{align*}
For \(\gamma<0\), the corresponding limit as
\(\lambda\to\infty\) is
\[
\frac{\|f\|_{L^1}}{-\gamma}
\int_{\mathbb S^{n-1}}|\Omega(\theta)|\,d\theta.
\]
\end{proof}

\begin{remark}
The non-negativity assumption on \(f\) is used only to express the
constants in terms of \(\|f\|_{L^1}\). For a general
compactly supported
$
f\in L^1\cap L^\infty,
$
Lemma~\ref{lem:CZ-asymptotic} gives
\[
A_f(\theta)
=
|\Omega(\theta)|
\left|
\int_{\mathbb R^n}f(y)\,dy
\right|.
\]
Accordingly, the conclusions of
Theorems~\ref{thm:strong:CZ} and \ref{thm:weak:CZ} remain valid with
\(\|f\|_{L^1}\) replaced by
\[
\left|
\int_{\mathbb R^n}f(y)\,dy
\right|.
\]
\end{remark}

\subsection{Limiting behavior for directional maximal operators}

Every \(x\in\mathbb R^n\) can be written uniquely as
\[
x=z+s\theta,
\qquad
z\in\theta^\perp,
\quad s\in\mathbb R.
\]
We write
\[
F_\theta(z)
:=
\int_{\mathbb R}|f(z+r\theta)|\,dr,
\qquad z\in\theta^\perp.
\]
By Fubini's theorem,
\[
\int_{\theta^\perp}F_\theta(z)\,dz
=
\|f\|_{L^1}.
\]

\begin{lemma}\label{lem:M-directional}
Let \(f\in L^1\) and suppose that
\(\supp f\subset B(0,R)\). Then, for every
\(z\in\theta^\perp\),
\[
\lim_{s\to-\infty}
|s|M_\theta f(z+s\theta)
=
F_\theta(z).
\]
More precisely, for \(s<-R\),
\[
\frac{F_\theta(z)}{|s|+R}
\le
M_\theta f(z+s\theta)
\le
\frac{F_\theta(z)}{|s|-R}.
\]
Moreover,
\[
\lim_{s\to +\infty}M_\theta f(z+s\theta)=0.
\]
\end{lemma}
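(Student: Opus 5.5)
Note first that the statement does not pin down which directional maximal operator is meant. The \(M_\theta\) of Section~3 is one-sided, \(M_\theta g(x)=\sup_{t>0}\frac1t\int_0^t|g(x+r\theta)|\,dr\), and the assertion \(M_\theta f(z+s\theta)\to0\) as \(s\to+\infty\) only makes sense for that one-sided version: a centered version would not vanish there. So we work with the one-sided operator. Fix \(\theta\), \(z\in\theta^\perp\), and put \(g(r):=|f(z+r\theta)|\), a function on \(\mathbb R\) with \(\int_{\mathbb R}g=F_\theta(z)\). Since \(|z+r\theta|^2=|z|^2+r^2\), we have \(\supp g\subset[-R,R]\) (if \(|z|\ge R\), then \(g\equiv0\) and everything is trivial). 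For the point \(x=z+s\theta\),
\[
M_\theta f(z+s\theta)=\sup_{t>0}\frac1t\int_s^{s+t}g(u)\,du .
\]
Thus the whole lemma reduces to a one-dimensional statement about averages of \(g\) over intervals \([s,s+t]\). This is the line analogue of Lemma~\ref{lem:M}.

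Now take \(s<-R\), so that \(|s|=-s\).

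\emph{Lower bound.} Choose \(t=|s|+R\). Then \([s,s+t]=[s,R]\supset[-R,R]\), so the average equals \(F_\theta(z)/(|s|+R)\).

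\emph{Upper bound.} If \(t\le|s|-R\), then \(s+t\le-R\). The interval \([s,s+t]\) then meets \(\supp g\) in at most a null set, so the average is \(0\). If \(t>|s|-R\), the average is at most \(\frac1t\int g\le F_\theta(z)/(|s|-R)\).

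Taking the supremum over \(t\) gives the two-sided estimate. Multiplying by \(|s|\) and letting \(s\to-\infty\), the squeeze theorem gives \(|s|M_\theta f(z+s\theta)\to F_\theta(z)\).

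\emph{The limit \(s\to+\infty\).} For \(s>R\) and every \(t>0\), the interval \([s,s+t]\) lies in \((R,\infty)\), where \(g=0\). Hence \(M_\theta f(z+s\theta)=0\) for all \(s>R\), which is stronger than the claimed limit.

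The only step needing care is the reduction itself: checking that \(\supp f\subset B(0,R)\) forces \(\supp g\subset[-R,R]\), and fixing the one-sided convention. Everything after that is the elementary estimate above, and no real obstacle is expected.
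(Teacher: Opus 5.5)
Your proof is correct and follows essentially the same route as the paper: reduce to one-sided averages over $[s,s+t]$, take $t=|s|+R$ for the lower bound, note that a nonzero average forces $t>|s|-R$ for the upper bound, squeeze, and observe that $M_\theta f(z+s\theta)=0$ for $s>R$. One small correction to your motivating remark: a centered directional maximal function would still tend to $0$ as $s\to+\infty$ (like $F_\theta(z)/(2s)$). What the one-sided convention actually fixes is the constant in the $s\to-\infty$ limit, which would be $F_\theta(z)/2$ for the centered version. This does not affect your argument, since the paper's $M_\theta$ is the one-sided operator from Section~3.
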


\begin{proof}
Fix \(z\in\theta^\perp\) and \(s<-R\). Since
\(\supp f\subset B(0,R)\), we have
\[
f(z+u\theta)=0
\qquad\text{for }u\notin[-R,R].
\]
Taking \(t=R-s=|s|+R\), we obtain
\begin{align*}
M_\theta f(z+s\theta)
&\ge
\frac{1}{R-s}
\int_0^{R-s}|f(z+(s+r)\theta)|\,dr\\
&=
\frac{1}{|s|+R}
\int_s^R|f(z+u\theta)|\,du=
\frac{F_\theta(z)}{|s|+R}.
\end{align*}
For the reverse inequality, suppose that
\[
\int_0^t|f(z+(s+r)\theta)|\,dr>0.
\]
Then the interval \([s,s+t]\) meets \([-R,R]\), and hence
\(
s+t>-R.
\)
It follows that
\(
t>|s|-R.
\)
Therefore,
\[
\frac{1}{t}\int_0^t|f(z+(s+r)\theta)|\,dr
\le
\frac{F_\theta(z)}{t}
\le
\frac{F_\theta(z)}{|s|-R}.
\]
Taking the supremum over \(t>0\), we obtain
\[
M_\theta f(z+s\theta)
\le
\frac{F_\theta(z)}{|s|-R}.
\]
Consequently,
\[
F_\theta(z)\frac{|s|}{|s|+R}
\le
|s|M_\theta f(z+s\theta)
\le
F_\theta(z)\frac{|s|}{|s|-R}.
\]
Letting \(s\to-\infty\) proves the desired limit.

Finally, if \(s>R\), then for every \(r>0\),
\(
|z+(s+r)\theta|
\ge s+r>R.
\)
Thus
\(
f(z+(s+r)\theta)=0,
\)
and therefore
\(
M_\theta f(z+s\theta)=0.
\)
\end{proof}

\begin{remark}
Unlike the centered and non-centered Hardy--Littlewood maximal operators,
\(M_\theta f\) does not exhibit an \(n\)-dimensional radial decay of
order \(|x|^{-n}\).
\end{remark}

\begin{theorem}
If \(f\in L^1\cap L^\infty \) has compact
support, then
\[
\lim_{p\to1^+}
(p-1)\|M_\theta f\|_{L^p }^p
=
\|f\|_{L^1}.
\]
\end{theorem}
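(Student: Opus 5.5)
We reduce to a one-dimensional computation along lines parallel to $\theta$, using the decomposition $x=z+s\theta$ with $z\in\theta^\perp$, $s\in\mathbb R$. We then apply the two-sided bounds of Lemma~\ref{lem:M-directional} on each line and finish with dominated convergence in $z$.

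By Fubini,
\[
(p-1)\|M_\theta f\|_{L^p}^p=\int_{\theta^\perp}\Phi_p(z)\,dz,
\qquad
\Phi_p(z):=(p-1)\int_{\mathbb R}M_\theta f(z+s\theta)^p\,ds .
\]
Assume $\supp f\subset B(0,R)$. For $|z|\ge R$ the whole line misses the support, so $M_\theta f(z+s\theta)=0$ and $\Phi_p(z)=0$. Hence only $z\in\theta^\perp\cap B(0,R)$ matters, which is a set of finite $(n-1)$-dimensional measure. For such $z$ we split the $s$-integral into three ranges:
\begin{itemize}
\item $s>R$: the integrand vanishes by the last assertion of Lemma~\ref{lem:M-directional}.
\item $-2R\le s\le R$: we use the trivial bound $M_\theta f\le\|f\|_{L^\infty}$. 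The contribution is at most $3R(p-1)\|f\|_{L^\infty}^p$, which tends to $0$.
\item $s<-2R$: Lemma~\ref{lem:M-directional} gives $F_\theta(z)/(|s|+R)\le M_\theta f(z+s\theta)\le F_\theta(z)/(|s|-R)$.
\end{itemize}
In the last range we integrate explicitly:
\[
(p-1)F_\theta(z)^p\int_{2R}^\infty (u\pm R)^{-p}\,du=F_\theta(z)^p\,(2R\pm R)^{1-p}.
\]
As $p\to1^+$ both the upper and lower expressions tend to $F_\theta(z)$, because $F_\theta(z)^p\to F_\theta(z)$ and $(cR)^{1-p}\to1$. Hence $\Phi_p(z)\to F_\theta(z)$ for every $z$.

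The main (mild) obstacle is exchanging the limit $p\to1^+$ with the $z$-integral. We handle it by dominated convergence using the bound $F_\theta(z)\le 2R\|f\|_{L^\infty}$. Restrict to, say, $1<p<2$. Then
\[
\Phi_p(z)\le 3R\|f\|_{L^\infty}^p+F_\theta(z)^pR^{1-p}\le C(R,\|f\|_{L^\infty})\,\one_{\{|z|<R\}}(z),
\]
and this majorant is integrable on $\theta^\perp$. Dominated convergence then gives
\[
\lim_{p\to1^+}(p-1)\|M_\theta f\|_{L^p}^p=\int_{\theta^\perp}F_\theta(z)\,dz=\|f\|_{L^1},
\]
where the last equality is the Fubini identity stated before Lemma~\ref{lem:M-directional}. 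The scheme parallels Theorem~\ref{thm:strong:T}, except that the decay along each line is one-dimensional. Accordingly, the normalization $\alpha=p-1$ replaces $n(p-1)$, which is why no factor $1/n$ appears.
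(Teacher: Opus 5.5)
Your proof is correct and follows the paper's argument essentially step for step: the slicing $x=z+s\theta$, the same three $s$-ranges, and the same explicit integrals producing $(3R)^{1-p}\int_{\theta^\perp}F_\theta(z)^p\,dz$ and $R^{1-p}\int_{\theta^\perp}F_\theta(z)^p\,dz$. The only cosmetic difference is that you apply dominated convergence to $\Phi_p(z)$ line by line, whereas the paper first integrates the two-sided bounds over $z$ and then applies dominated convergence to $\int_{\theta^\perp}F_\theta(z)^p\,dz$.
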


\begin{proof}
Suppose that \(\supp f\subset B(0,R)\). Using the orthogonal
decomposition
\[
x=z+s\theta,
\qquad
z\in\theta^\perp,
\quad s\in\mathbb R,
\]
we have
\(
dx=dz\,ds.
\)
Moreover,
\[
F_\theta(z)=0
\qquad\text{if }|z|\ge R,
\]
and
\(
0\le F_\theta(z)
\le
2R\|f\|_{L^\infty }.
\)
In particular, \(F_\theta\) is compactly supported on
\(\theta^\perp\).

For \(s<-2R\), Lemma~\ref{lem:M-directional} gives
\[
\frac{F_\theta(z)}{|s|+R}
\le
M_\theta f(z+s\theta)
\le
\frac{F_\theta(z)}{|s|-R}.
\]
Hence
\begin{align*}
&(p-1)
\int_{\theta^\perp}
\int_{-\infty}^{-2R}
\left(
\frac{F_\theta(z)}{|s|+R}
\right)^p
\,ds\,dz\\
&\qquad\le
(p-1)
\int_{\theta^\perp}
\int_{-\infty}^{-2R}
|M_\theta f(z+s\theta)|^p
\,ds\,dz\\
&\qquad\le
(p-1)
\int_{\theta^\perp}
\int_{-\infty}^{-2R}
\left(
\frac{F_\theta(z)}{|s|-R}
\right)^p
\,ds\,dz.
\end{align*}
Making the change of variables \(u=-s\), we obtain
\[
(3R)^{1-p}
\int_{\theta^\perp}F_\theta(z)^p\,dz
\le
(p-1)
\int_{\theta^\perp}
\int_{-\infty}^{-2R}
|M_\theta f(z+s\theta)|^p
\,ds\,dz
\]
and
\[
(p-1)
\int_{\theta^\perp}
\int_{-\infty}^{-2R}
|M_\theta f(z+s\theta)|^p
\,ds\,dz
\le
R^{1-p}
\int_{\theta^\perp}F_\theta(z)^p\,dz.
\]

For \(|z|\ge R\) or \(s>R\), \(M_\theta f(z+s\theta)=0\). Therefore,
\begin{align*}
0
&\le
(p-1)
\int_{\theta^\perp}
\int_{-2R}^{\infty}
|M_\theta f(z+s\theta)|^p\,ds\,dz\\
&\le
(p-1)
\|f\|_{L^\infty }^p
\left|
\left\{
(z,s)\in\theta^\perp\times\mathbb R:
|z|<R,\ -2R<s<R
\right\}
\right|.
\end{align*}
The right-hand side tends to zero as \(p\to1^+\).

Since \(F_\theta\) is bounded and compactly supported, the dominated
convergence theorem gives
\[
\lim_{p\to1^+}
\int_{\theta^\perp}F_\theta(z)^p\,dz
=
\int_{\theta^\perp}F_\theta(z)\,dz.
\]
Consequently,
\[
\lim_{p\to1^+}
(p-1)\|M_\theta f\|_{L^p }^p
=
\int_{\theta^\perp}F_\theta(z)\,dz.
\]
Finally, by Fubini's theorem,
\[
\int_{\theta^\perp}F_\theta(z)\,dz
=
\int_{\theta^\perp}
\int_{\mathbb R}|f(z+r\theta)|\,dr\,dz
=
\|f\|_{L^1}.
\]
\end{proof}

\begin{theorem}
Let \(f\in L^1\cap L^\infty \) have compact
support. If \(\gamma\ne n-1\), then
\[
\begin{cases}
\displaystyle
\lim_{\lambda\to0^+}
\lambda W_\gamma
\left(
\left\{
x\in\mathbb R^n:
\frac{M_\theta f(x)}
{|x|^{\gamma-n}}
>\lambda
\right\}
\right)
=
\frac{1}{\gamma-n+1}
\|f\|_{L^1},
&\gamma>n-1,\\[14pt]
\displaystyle
\lim_{\lambda\to\infty}
\lambda W_\gamma
\left(
\left\{
x\in\mathbb R^n:
\frac{M_\theta f(x)}
{|x|^{\gamma-n}}
>\lambda
\right\}
\right)
=
\frac{1}{n-1-\gamma}
\|f\|_{L^1},
&\gamma<n-1.
\end{cases}
\]
\end{theorem}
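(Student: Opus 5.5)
The plan is a direct computation in the coordinates $x=z+s\theta$, $z\in\theta^\perp$, $s\in\mathbb R$, rather than an appeal to Theorem~\ref{thm:weak:T}. That theorem does not apply here because $M_\theta f$ decays only like $|s|^{-1}$ along one direction, not like $|x|^{-n}$; this mismatch is what shifts the critical exponent from $0$ to $n-1$.

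Suppose $\supp f\subset B(0,R)$. By Lemma~\ref{lem:M-directional}:
\begin{itemize}
\item $M_\theta f(z+s\theta)=0$ whenever $|z|\ge R$ or $s>R$;
\item for $s<-R$ we have the sandwich $F_\theta(z)/(|s|+R)\le M_\theta f(z+s\theta)\le F_\theta(z)/(|s|-R)$;
\item $M_\theta f\le \|f\|_{L^\infty}$ everywhere.
\end{itemize}
Since $|z|<R$ on the relevant set, $|x|^2=|z|^2+s^2$ gives $|s|\le|x|\le |s|(1+R^2/s^2)^{1/2}$. Hence $|x|^{\gamma-n}=|s|^{\gamma-n}(1+o(1))$ as $|s|\to\infty$, uniformly in $|z|<R$.

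I split the level set $E_\lambda=\{x: M_\theta f(x)>\lambda|x|^{\gamma-n}\}$ into a bounded part, $E_\lambda\cap\{|z|<R,\,-S_0\le s\le R\}$ for a large fixed $S_0\ge 2R$, and a far part $E_\lambda\cap\{|z|<R,\, s<-S_0\}$.

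\textbf{Bounded part.}
\begin{itemize}
\item If $\gamma>n-1$ and $\lambda\to0^+$: since $\gamma>0$, the weight $|x|^{\gamma-n}$ is integrable on bounded sets. The contribution is therefore at most $\lambda W_\gamma(B(0,S_0+R))\to0$.
\item If $\gamma<n-1$ and $\lambda\to\infty$: membership requires $\|f\|_{L^\infty}>\lambda|x|^{\gamma-n}$. As $\gamma-n<0$, this forces $|x|>(\lambda/\|f\|_{L^\infty})^{1/(n-\gamma)}\to\infty$, so the bounded part is empty for large $\lambda$.
\end{itemize}

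\textbf{Far part.} Fix $\varepsilon>0$ and choose $S_0$ so that $|s|\pm R$ and $|x|$ are within factors $(1\pm\varepsilon)$ of $|s|$ there. Then for each $z$ the $s$-section of $E_\lambda$ is sandwiched between sets of the form $\{u=|s|>S_0:\ c_\pm F_\theta(z)>\lambda u^{\gamma-n+1}\}$, with $c_\pm\to1$ as $\varepsilon\to0$.
\begin{itemize}
\item For $\gamma>n-1$ this is an interval $S_0<u<(c F/\lambda)^{1/(\gamma-n+1)}$. Its $u^{\gamma-n}\,du$-measure is $\bigl(cF/\lambda-S_0^{\gamma-n+1}\bigr)_+/(\gamma-n+1)$, which is the identity \eqref{BSY:eq1} with exponent $\gamma-n+1$ up to the $S_0$ truncation.
\item For $\gamma<n-1$ the section is $u>\max\{S_0,(\lambda/cF)^{1/(n-1-\gamma)}\}$, with measure $cF/(\lambda(n-1-\gamma))$ once $\lambda$ is large.
\end{itemize}
Multiplying by $\lambda$, the truncation error is $O(\lambda)$ in the first case and vanishes in the second. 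Each integrand is bounded by $C F_\theta(z)/|\gamma-n+1|$, and $F_\theta$ is bounded and supported in $|z|<R$. Dominated convergence in $z\in\theta^\perp$ then gives $\limsup$ and $\liminf$ within factors $c_\pm$ of $\frac{1}{|\gamma-n+1|}\int_{\theta^\perp}F_\theta\,dz$.

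Letting $\varepsilon\to0$ and using Fubini, $\int_{\theta^\perp}F_\theta=\|f\|_{L^1}$, which yields both stated limits. The main obstacle is the bookkeeping in the far-part sandwich: the $(1\pm\varepsilon)$ distortions in $|x|$ versus $|s|$ and in $|s|\pm R$ must enter only as multiplicative constants in the threshold. I would handle this by absorbing them into $c_\pm$ before computing the one-dimensional measure exactly, and would check carefully that the sign of $\gamma-n+1$ correctly dictates whether $\lambda\to0^+$ or $\lambda\to\infty$ is the active regime.
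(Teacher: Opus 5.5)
Your proposal is correct and follows essentially the same route as the paper. The paper also proceeds directly rather than through Theorem~\ref{thm:weak:T}. It separates a bounded region $\{|z|<R,\,-U\le s\le R\}$, where the contribution is controlled by $\lambda W_\gamma(K_U)\to0$ for $\gamma>n-1$ and is absent for large $\lambda$ when $\gamma<n-1$. On the far region it uses the sandwich of Lemma~\ref{lem:M-directional} with $(1\pm\varepsilon)$ distortions to reduce to the explicit one-dimensional computation with exponent $\beta=\gamma-n+1$, and concludes with Fubini.
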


\begin{proof}
Set
\[
F_\theta(z):=\int_{\mathbb R}|f(z+r\theta)|\,dr,
\qquad
\beta:=\gamma-n+1.
\]
Since \(\supp f\subset B(0,R)\),
\[
F_\theta(z)=0\quad\text{for }|z|\ge R,
\qquad
F_\theta(z)\le 2R\|f\|_{L^\infty}.
\]
Writing
\(
x=z-u\theta,
 z\in\theta^\perp, u>R,
\)
Lemma~\ref{lem:M-directional} gives
\[
\frac{F_\theta(z)}{u+R}
\le M_\theta f(z-u\theta)
\le \frac{F_\theta(z)}{u-R}.
\]
Moreover,
\[
|z-u\theta|=(u^2+|z|^2)^{1/2}.
\]

Fix \(\varepsilon\in(0,1)\). There exists \(U>2R\) such that, for
\(u\ge U\) and \(|z|\le R\),
\[
(1-\varepsilon)F_\theta(z)u^{-\beta}
\le
\frac{M_\theta f(z-u\theta)}
{|z-u\theta|^{\gamma-n}}
\le
(1+\varepsilon)F_\theta(z)u^{-\beta},
\]
and
\[
(1-\varepsilon)u^{\beta-1}
\le
|z-u\theta|^{\gamma-n}
\le
(1+\varepsilon)u^{\beta-1}.
\]
Define
\[
I_\lambda
:=
\lambda
\int_{|z|<R}\int_U^\infty
\mathbf 1_{\left\{
\frac{M_\theta f(z-u\theta)}
{|z-u\theta|^{\gamma-n}}>\lambda
\right\}}
|z-u\theta|^{\gamma-n}\,du\,dz.
\]

Assume first that \(\beta>0\). Then
\begin{align*}
&(1-\varepsilon)\lambda
\int_{|z|<R}\int_U^\infty
\mathbf 1_{\{(1-\varepsilon)F_\theta(z)u^{-\beta}>\lambda\}}
u^{\beta-1}\,du\,dz\\
&\qquad\le I_\lambda
\le
(1+\varepsilon)\lambda
\int_{|z|<R}\int_U^\infty
\mathbf 1_{\{(1+\varepsilon)F_\theta(z)u^{-\beta}>\lambda\}}
u^{\beta-1}\,du\,dz.
\end{align*}
For \(a>0\),
\[
\lambda\int_U^\infty
\mathbf 1_{\{aF_\theta(z)u^{-\beta}>\lambda\}}
u^{\beta-1}\,du
=
\frac{1}{\beta}
\bigl(aF_\theta(z)-\lambda U^\beta\bigr)_+.
\]
Hence, letting \(\lambda\to0^+\),
\[
\frac{(1-\varepsilon)^2}{\beta}
\int_{\theta^\perp}F_\theta(z)\,dz
\le
\liminf_{\lambda\to0^+}I_\lambda
\le
\limsup_{\lambda\to0^+}I_\lambda
\le
\frac{(1+\varepsilon)^2}{\beta}
\int_{\theta^\perp}F_\theta(z)\,dz.
\]

The remaining part of the level set is contained in
\[
K_U
:=
\{z+s\theta:|z|<R,\,-U\le s\le R\}.
\]
Since \(\gamma>n-1\), \(W_\gamma(K_U)<\infty\), and therefore
\[
\lambda W_\gamma
\left(
\left\{
x\in K_U:
\frac{M_\theta f(x)}{|x|^{\gamma-n}}>\lambda
\right\}
\right)
\le \lambda W_\gamma(K_U)\longrightarrow0.
\]
Letting \(\varepsilon\to0^+\), we obtain
\[
\lim_{\lambda\to0^+}
\lambda W_\gamma
\left(
\left\{
x:
\frac{M_\theta f(x)}{|x|^{\gamma-n}}>\lambda
\right\}
\right)
=
\frac{1}{\beta}
\int_{\theta^\perp}F_\theta(z)\,dz.
\]

Assume next that \(\beta<0\), and put
\[
\delta:=-\beta=n-1-\gamma>0.
\]
Then
\begin{align*}
&(1-\varepsilon)\lambda
\int_{|z|<R}\int_U^\infty
\mathbf 1_{\{(1-\varepsilon)F_\theta(z)u^\delta>\lambda\}}
u^{-\delta-1}\,du\,dz\\
&\qquad\le I_\lambda\\
&\qquad\le
(1+\varepsilon)\lambda
\int_{|z|<R}\int_U^\infty
\mathbf 1_{\{(1+\varepsilon)F_\theta(z)u^\delta>\lambda\}}
u^{-\delta-1}\,du\,dz.
\end{align*}
Since \(F_\theta\in L^\infty(\theta^\perp)\), for all sufficiently
large \(\lambda\) and \(a\in\{1-\varepsilon,1+\varepsilon\}\),
\[
\lambda\int_U^\infty
\mathbf 1_{\{aF_\theta(z)u^\delta>\lambda\}}
u^{-\delta-1}\,du
=
\frac{aF_\theta(z)}{\delta}.
\]
Consequently,
\[
\frac{(1-\varepsilon)^2}{\delta}
\int_{\theta^\perp}F_\theta(z)\,dz
\le I_\lambda\le
\frac{(1+\varepsilon)^2}{\delta}
\int_{\theta^\perp}F_\theta(z)\,dz
\]
for all sufficiently large \(\lambda\).

On \(K_U\),
\[
\frac{M_\theta f(x)}{|x|^{\gamma-n}}
=
M_\theta f(x)|x|^{n-\gamma}
\le
\|f\|_{L^\infty}
\sup_{x\in K_U}|x|^{n-\gamma}<\infty.
\]
Letting first \(\lambda\to\infty\) and then
\(\varepsilon\to0^+\), we obtain
\[
\lim_{\lambda\to\infty}
\lambda W_\gamma
\left(
\left\{
x:
\frac{M_\theta f(x)}{|x|^{\gamma-n}}>\lambda
\right\}
\right)
=
\frac{1}{\delta}
\int_{\theta^\perp}F_\theta(z)\,dz.
\]

Finally, Fubini's theorem yields
\[
\int_{\theta^\perp}F_\theta(z)\,dz
=
\int_{\theta^\perp}\int_{\mathbb R}
|f(z+r\theta)|\,dr\,dz
=
\|f\|_{L^1}.
\]
Since
\[
\beta=\gamma-n+1,
\qquad
\delta=n-1-\gamma,
\]
the desired conclusions follow.
\end{proof}

\section{Proof of Theorems \ref{thm:DM-1}}

Now, we give the proof of Theorems \ref{thm:DM-1}.

\begin{proof}[Proof of Theorem~\ref{thm:DM-1}]
Let \(M =T^*f(x)\). For every \(\lambda>0\),
\[
 \left\{t>0:\frac{|T_tf(x)|}{t^\gamma}>\lambda\right\}
 \subset
 \left\{t>0:\frac{M }{t^\gamma}>\lambda\right\}
 =\left(0,\left(\frac{M }{\lambda}\right)^{1/\gamma}\right).
\]
Consequently,
\begin{align*}
 \lambda w_\gamma\left(\left\{t:
 \frac{|T_tf(x)|}{t^\gamma}>\lambda\right\}\right)
 &\le\lambda\int_0^{(M /\lambda)^{1/\gamma}}
 t^{\gamma-1}\,dt=\frac{M }{\gamma}.
\end{align*}
Taking the supremum over \(\lambda>0\) gives
\begin{equation}
\label{eq:DM-iterated-upper}
 \left\|\frac{T_tf(x)}{t^\gamma}\right\|_{L_t^{1,\infty}(w_\gamma)}
 \le\frac1\gamma T^*f(x).
\end{equation}
Applying the weak \(L^1(X,m)\) quasi-norm to both sides,
\[
 \left\|\left\|\frac{T_tf}{t^\gamma}\right\|_{L_t^{1,\infty}}
 \right\|_{L_x^{1,\infty}}
 \le\frac1\gamma\|T^*f\|_{L_x^{1,\infty}}.
\]
If \(T^*\) is of weak type \((1,1)\), the last term is at most
\(C\|f\|_{L^{1}(X,m)}/\gamma\).

On the other hand, Theorem~\ref{thm:BSVY}, with \(q=1\) and
\(\gamma>0\), gives
\begin{equation*}
 \lim_{\lambda\to\infty}\lambda w_\gamma
 \left(\left\{t>0:
 \frac{|T_tf(x)|}{t^\gamma}>\lambda\right\}\right)
 ==\lim_{t\to 0^+}\frac{T_tf(x)}{\gamma}=\frac{|f(x)|}{\gamma}.
\end{equation*}
Since the weak quasi-norm is the supremum of the expression on the left,
\[
 \frac{|f(x)|}{\gamma}
 \le\left\|\frac{T_tf(x)}{t^\gamma}\right\|_{L_t^{1,\infty}(w_\gamma)}
 \le\frac{T^*f(x)}\gamma,
\]
where the upper bound is \eqref{eq:DM-iterated-upper}. Taking the outer weak
\(L^1(X,m)\) quasi-norm proves the two-sided estimate.
\end{proof}

\vskip 0.2 true cm
{\bf \color{blue}{Conflict of Interest Statement:}}

\vskip 0.2 true cm

{\bf The authors declare that there is no conflict of interest in relation to this article.}

\vskip 0.2 true cm
{\bf \color{blue}{Data availability statement:}}

\vskip 0.2 true cm

{\bf  Data sharing is not applicable to this article as no data sets are generated
during the current study.}

\vskip 0.2 true cm

\end{document}